\documentclass[reqno]{amsart}
\usepackage{amssymb}
\usepackage[usenames, dvipsnames]{color}
\usepackage{pxfonts}
\usepackage{enumerate}
\usepackage{amsmath}
\usepackage[driverfallback=dvipdfm]{hyperref}{\tiny}
\usepackage{verbatim}

\allowdisplaybreaks[4]

\numberwithin{equation}{section}

\newtheorem{theorem}{Theorem}[section]
\newtheorem{corollary}[theorem]{Corollary}
\newtheorem{lemma}[theorem]{Lemma}
\newtheorem{prop}[theorem]{Proposition}

\theoremstyle{definition}
\newtheorem{remark}[theorem]{Remark}

\theoremstyle{definition}
\newtheorem{definition}[theorem]{Definition}

\theoremstyle{definition}
\newtheorem{assumption}[theorem]{Assumption}

\theoremstyle{definition}

\makeatletter
\def\dashint{\operatorname%
{\,\,\text{\bf-}\kern-.98em\DOTSI\intop\ilimits@\!\!}}
\makeatother

\def\\det{\text{det}}

\def\.5{\frac{1}{2}}

\newcommand{\RN}[1]{%
  \textup{\uppercase\expandafter{\romannumeral#1}}%
}

\newcommand{\Div}{\operatorname{div}}

\newcounter{marnote}

\makeatletter
\@namedef{subjclassname@2020}{%
  \textup{2020} Mathematics Subject Classification}
\makeatother

\begin{document}


\title[Gradient estimates]{Gradient continuity estimates for elliptic equations of singular $p$-Laplace type with measure data}

\author[L. Xu]{Longjuan Xu}
\address[L. Xu]{Academy for Multidisciplinary Studies, Capital Normal University, Beijing 100048, China.}
\email{longjuanxu@cnu.edu.cn}
\thanks{L. Xu was partially supported by NSF of China (12301141), and Beijing Municipal Education Commission Science and Technology Project (KM202410028001).}

\author[Y. Zhao]{Yirui Zhao}
\address[Y. Zhao]{School of Mathematical Sciences, Capital Normal University, Beijing 100048, China.}
\email{2230502155@cnu.edu.cn}

\begin{abstract}
In this paper, we are concerned with elliptic equations of
$p$-Laplace type with measure data, which is given by $-\Div\big(a(x)(|\nabla u|^2+s^2)^{\frac{p-2}{2}}\nabla u\big)=\mu$ with $p>1$ and $s\geq0$. Under the assumption that the modulus of continuity of the coefficient $a(x)$ in the $L^2$-mean sense satisfies the Dini condition, we prove a new comparison estimate and use it to derive interior and global gradient pointwise estimates by Wolff potential for $p\geq 2$ and Riesz potential for $1<p<2$, respectively. Our interior gradient pointwise estimates can be applied to a class of singular quasilinear elliptic equations with measure data given by $-\Div(A(x,\nabla u))=\mu$. We generalize the results in the papers of Duzaar and Mingione [Amer. J. Math. 133, 1093--1149 (2011)], Dong and Zhu [J. Eur. Math. Soc. 26, 3939--3985 (2024)], and Nguyen and Phuc [Arch. Rational Mech. Anal. (2023) 247:49], etc., where the coefficient is assumed to be Dini continuous. Moreover, we establish interior and global modulus of continuity estimates of the gradients of solutions.  
\end{abstract}

\maketitle

\section{Introduction and main results}
In this paper, we consider elliptic equations of
$p$-Laplace type with measure data and $s\geq0$:
\begin{equation}\label{p-laplace}
-\Div\big(a(x)(|\nabla u|^2+s^2)^{\frac{p-2}{2}}\nabla u\big)=\mu\quad\mbox{in}~\Omega,
\end{equation}
where $\Omega$ is a domain in $\mathbb R^n$ with $n\geq 2$, $p>1$, $\mu$ is a locally finite signed Radon measure in $\Omega$, that is, $|\mu|(B_R(x)\cap\Omega)<\infty$ for any $B_R(x)\subset\mathbb R^n$. We shall always assume that $\mu$ is defined in $\mathbb R^n$ by setting $|\mu|(\mathbb R^n\setminus\Omega)=0$. The parameter $s\geq0$ is fixed distinguishing the degenerate case ($s=0$) from the non-degenerate one ($s>0$). We say $u\in W_{\text{loc}}^{1,p}(\Omega)$ is a weak solution of \eqref{p-laplace} if for any $\varphi\in C_0^{\infty}(\Omega)$,
\begin{equation*}
\int_{\Omega}\langle a(x)(|\nabla u|^2+s^2)^{\frac{p-2}{2}}\nabla u),\nabla \varphi\rangle dx=\int_{\Omega}\varphi\ d\mu
\end{equation*}
holds. The focus of this paper is on deriving pointwise estimates for the gradient of the weak solution to \eqref{p-laplace} via Wolff potential 
\begin{equation}\label{W potential}
{\bf W}_{\beta,p}^R(|\mu|)(x)=\int_0^R\big(\frac{|\mu|(B_t(x))}{t^{n-\beta p}}\big)^{\frac{1}{p-1}}\frac{dt}{t},\quad\beta\in(0,n/p],~p\geq 2,
\end{equation} 
and Riesz potential
\begin{equation}\label{I potential}
{\bf I} _1^R(|\mu|)(x)=\int_0^R\frac{|\mu|(B_t(x))}{t^{n-1}}\frac{dt}{t},\quad1<p<2.
\end{equation}
The estimates provided by these potentials are non-linear analog of the classical pointwise estimate of the gradient of solutions to the Poisson equation $-\Delta u=\mu$, where $\nabla u$ can be estimated by Riesz potential \cite{km2012,km2014}. The second aim of this paper is to demonstrate $C^1$-regularity of solutions giving conditions in terms of Wolff potential and Riesz potential implying the continuity of $\nabla u$. Furthermore, we will establish interior and global modulus of continuity estimates of the gradients of solutions. 
Although we focus on the model case in \eqref{p-laplace}, our method can be applied to equations with more general structures: 
\begin{equation}\label{general-eq}
-\Div(A(x,\nabla u))=\mu.
\end{equation}
The details of this generalization are provided in Remark \ref{rmk-general}.  

The case where $p=2$ has been studied in \cite{m2011}. Subsequently, in \cite{dm2011,MR2010}, the study was extended to the case where $p\geq2$, with \eqref{general-eq} under consideration. To ensure the comprehensiveness of this paper, we will review the results for the model case presented in \eqref{p-laplace}. In the groundbreaking work \cite{dm2011}, Duzaar and Mingione established the gradient pointwise estimate for solutions to \eqref{p-laplace} as follows: for almost $x\in\Omega$,
\begin{equation*}
|\nabla u(x)|\leq C{\bf W}_{\frac{1}{p},p}^R(|\mu|)(x)+C\fint_{B_R(x)}(|\nabla u(y)|+s)\ dy
\end{equation*}
holds whenever $B_R(x)\subset\Omega$, given that $p\geq2$ and $a(x)$ exhibits Dini continuity. In a follow-up study \cite{MR2010}, they showed that $\nabla u$ is continuous in an open subset $\Omega_0$ of $\Omega$, by assuming that the functions
\begin{equation*}
    x\rightarrow{\bf W}_{\frac{1}{p},p}^R(|\mu|)(x)~converge~locally~uniformly~to~zero~in~\Omega_0~as~R\rightarrow 0.
\end{equation*}
For further related results concerning $p\geq2$, we refer to \cite{DM2009,m2011-2,Kuusi2013} and the references therein. 

Duzaar and Mingione \cite{dm2010} initially considered the case where $p\in\big(2-\frac{1}{n},2\big)$ and proved that under the assumption of Dini continuity for the coefficient and $u\in C^1$, the following pointwise gradient estimate holds for almost $x\in\Omega$:
\begin{equation*}
    |\nabla u(x)|\leq C\fint_{B_R(x)}(|\nabla u|+s)\ dy+C[{\bf I}_1^R(|\mu|)(x)]^{\frac{1}{p-1}},\quad B_R(x)\subset\Omega.
\end{equation*}
Nguyen and Phuc \cite{MR2020} extended the study to the case $p\in\big(\frac{3n-2}{2n-1},2-\frac{1}{n}\big]$ and obtained a pointwise gradient estimate of $C^1$-solution in terms of a truncated nonlinear Wolff potential as follows:
\begin{equation*}
|\nabla u(x)|\leq C\left(\fint_{B_R(x)}(|\nabla u|+s)^\gamma\ dy\right)^{1/\gamma}+C[{\bf P}_\gamma^R(|\mu|)(x)]^{\frac{1}{\gamma(p-1)}},\quad B_R(x)\subset\Omega,
\end{equation*}
where $\gamma\in\big(\frac{n}{2n-1},\frac{n(p-1)}{n-1}\big)$ and  
$${\bf P}_\gamma^R(|\mu|)(x):=\int_0^R\big(\frac{|\mu|(B_t(x))}{t^{n-1}}\big)^{\gamma}\frac{dt}{t}.$$
Recently, Dong and Zhu \cite{dz2024} further extended the results in \cite{MR2020} to the case $p\in\big(\frac{3n-2}{2n-1},2\big)$ with the estimate
\begin{equation*}
|\nabla u(x)|\leq C\left(\fint_{B_R(x)}(|\nabla u|+s)^{2-p}\ dy\right)^{\frac{1}{2-p}}+C[{\bf I}_1^R(|\mu|)(x)]^{\frac{1}{p-1}},\quad B_R(x)\subset\Omega,
\end{equation*}
and they also derived the Lipschitz estimates for the case $p\in(1,2)$. Subsequently,  Nguyen and Phuc \cite{np2023} established a  comparison estimate in the range $p\in(1,3/2)$ and then proved the pointwise bounds of solutions for the case $p\in(1,3/2)$. 

When $p=2$, the equation \eqref{p-laplace} reduces to the linear case : 
\begin{equation*}
Lu:=-\Div(a(x)\nabla u).
\end{equation*}
It is a well-established fact that weak solutions to  $Lu=\Div g$ are continuously differentiable \cite{b1978,mm2011} under the assumption that both the coefficient and the data $g$ are $\alpha$-increasing Dini continuous for some $\alpha\in(0,1]$. Dong and Kim \cite{dk2017} extended this result by demonstrating that weak solutions to $Lu=\Div g$ are continuously differentiable provided that the modulus of continuity of the coefficient and $g$ in the $L^1$-mean sense satisfies the Dini condition, which answers a question raised by Li \cite{li2017}. The results in \cite{dk2017} prompt the question of whether the assumption of Dini-continuity for $a(\cdot)$ in \eqref{p-laplace} is sharp? 
In this paper, we aim to show that the conditions on the coefficient can indeed be relaxed. The main findings of the paper, which include both interior and boundary gradient estimates, are presented in Subsections \ref{interior-result} and \ref{boundary-result}, respectively. 

\subsection{Interior gradient estimates}\label{interior-result}
We first define
\begin{equation}\label{def-omega}
\omega(r):=\sup_{x\in\Omega}\big(\fint_{B_{r}(x)}|a(y)-(a)_{B_r(x)}|^2\ dy\big)^{\frac{1}{2}},
\end{equation}
where 
\begin{equation*}
(a)_{B_r(x)}:=\fint_{B_r(x)}a=\frac{1}{|B_r(x)|}\int_{B_r(x)}a, \quad \forall ~B_r(x)\subset\Omega.
\end{equation*}
 We impose the following assumptions on the coefficient $a(\cdot):=(a_{ij}(\cdot))_{i,j=1}^n$: 
\begin{assumption}\label{main-assump}
We assume that $a$ is defined on $\mathbb R^n$ and satisfies the uniformly elliptic condition with ellipticity constant $\lambda\geq1$:
\begin{equation}\label{condition}
\lambda^{-1}|\eta|^2\leq a_{ij}(x)\eta_i\eta_j,\quad|a_{ij}(x)|\leq\lambda,\quad x\in\mathbb R^n,\quad \forall~ \eta=(\eta_1,...,\eta_n)\in\mathbb R^n. 
\end{equation}
Furthermore, we assume that $\omega$ defined in \eqref{def-omega} satisfies the following condition:
\begin{equation}\label{Dini-1}
\int_0^1\frac{(\omega(r))^{\frac{2}{p}}}{r}\ dr<+\infty\quad \mbox{if}\quad p\geq 2,
\end{equation}
and 
\begin{equation}\label{Dini-2}
\int_0^1\frac{\omega(r)}{r}\ dr<+\infty\quad \mbox{if}\quad 1<p<2.
\end{equation}
\end{assumption}

It is noted that the Dini mean oscillation condition is weaker than the usual Dini continuity condition. The example can be found in \cite{dk2017}. Moreover, the comparison estimates from \cite[(4.35)]{dm2010} and \cite[Lemma 3.4]{dm2011} are significant in the proof of pointwise gradient estimates for the equation \eqref{p-laplace}. However, in the current case under consideration, these estimates are not directly applicable. To overcome these difficulties, we first prove an analogous estimate in Lemma \ref{lem-Dv-Dw}. The novelty here is that the right-hand side of the estimate involves $\|\nabla w\|_{L^\infty}$ instead of $\|\nabla w\|_{L^p}$ for $1<p<\infty$. Here, $w$ is the weak solution of the homogeneous equation in \eqref{Equ w=u}. Then combining with Campanato’s approach introduced in \cite{g1983,l1987} and the iteration technique, we obtain an a priori estimate of $\|\nabla w\|_{L^{\infty}}$ as showed in Lemma \ref{lem nabla w infty}. With these estimates in hand, we are ready to derive our desired results.

Our first main result is stated below.

\begin{theorem}\label{main result Pwg}(Interior pointwise gradient estimate). 
Let $p>1$ and suppose that $u\in$$W^{1,p}_{\text{loc}}(\Omega)$ is a weak solution to \eqref{p-laplace}. Then under Assumption \ref{main-assump}, there exists a constant $C=C(n,p,\lambda,\omega)$ such that for any Lebesgue point $x$ of $\nabla u$, with $B_{R}(x)\subset \Omega$ and $R\in (0,1]$, the following estimates hold:

(i) when $p\geq2$, then 
\begin{equation}\label{p>2 nabla u point}
    |\nabla u(x)|\leq C{\bf W}_{\frac{1}{p},p}^R(|\mu|)(x)+C\fint_{B_R(x)}(|\nabla u(y)|+s)\ dy;
\end{equation}

(ii) when $3/2\leq p<2$, then
\begin{equation}\label{3/2<p<2 nabla u point}
    |\nabla u(x)|\leq C[{\bf I}_1^R(|\mu|)(x)]^{\frac{1}{p-1}}+C\Bigg(\fint_{B_R(x)}(|\nabla u(y)|+s)^{2-p}\ dy\Bigg)^{\frac{1}{2-p}};
\end{equation}

(iii) when $1<p<3/2$, then
\begin{equation}\label{1<p<3/2 nabla u point}
    |\nabla u(x)|\leq C[{\bf I}_1^R(|\mu|)(x)]^{\frac{1}{p-1}}+C\Bigg(\fint_{B_R(x)}(|\nabla u(y)|+s)^{\frac{(p-1)^2}{2}}\ dy\Bigg)^{\frac{2}{(p-1)^2}}.
\end{equation}
\end{theorem}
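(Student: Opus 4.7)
The plan is to follow the standard nonlinear potential theory route of comparison, excess decay, iteration, and summation. Fix a Lebesgue point $x$ of $\nabla u$ and a ball $B_R(x)\subset\Omega$ with $R\in(0,1]$. For each dyadic scale $r\in(0,R]$, I introduce two comparison functions on $B_r=B_r(x)$: first, $v$ solving the $\mu$-free equation $-\Div(a(y)(|\nabla v|^2+s^2)^{(p-2)/2}\nabla v)=0$ in $B_r$ with $v=u$ on $\partial B_r$; second, $w$ solving the analogous frozen-coefficient homogeneous equation on a suitable smaller ball with $w=v$ on the boundary. The function $v$ absorbs the contribution of the measure, while $w$ benefits from the a priori $L^\infty$-bound on $\nabla w$ of Lemma \ref{lem nabla w infty} and the classical excess decay for constant-coefficient $p$-Laplace type operators.

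The three per-scale ingredients are: (a) a standard measure-data comparison controlling $\fint_{B_r}|\nabla u-\nabla v|^{\gamma}$ by a power of $|\mu|(B_r)/r^{n-1}$; (b) Lemma \ref{lem-Dv-Dw}, which bounds $\fint_{B_{r/2}}|\nabla v-\nabla w|$ by a power of $\omega(r)$ multiplied by $\|\nabla w\|_{L^\infty(B_{r/2})}$; and (c) Lemma \ref{lem nabla w infty}, which bounds $\|\nabla w\|_{L^\infty(B_{r/2})}$ in terms of the appropriate $L^q$-mean of $|\nabla v|+s$ on $B_r$. Combining these with the Campanato-type excess decay for $\nabla w$, I obtain on each scale a recursion of the form
\begin{equation*}
E(\tau r) \;\le\; C\tau^\alpha E(r) \;+\; C\omega(r)^\theta E(r) \;+\; C\,\Phi(|\mu|,r),
\end{equation*}
where $E(r)$ denotes the appropriate mean of $|\nabla u|+s$ on $B_r$, $\tau\in(0,1)$ is chosen so that $C\tau^\alpha\le 1/4$, and $\Phi(|\mu|,r)=(|\mu|(B_r)/r^{n-1})^{1/(p-1)}$ is the scale contribution of the measure.

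Iterating along $r_k=\tau^kR$ produces a linear recursion in $E(r_k)$ whose accumulated $\Phi$-contributions sum to precisely ${\bf W}_{1/p,p}^R(|\mu|)(x)$ in case (i) and to $[{\bf I}_1^R(|\mu|)(x)]^{1/(p-1)}$ in cases (ii)--(iii), by monotone comparison of the discrete sums to the defining integrals in \eqref{W potential} and \eqref{I potential}. The terms $\omega(r_k)^\theta$ are summable by the Dini conditions \eqref{Dini-1} and \eqref{Dini-2}, and contribute only a bounded multiplicative constant depending on $\omega$, absorbed via a standard Dini summation lemma in the spirit of \cite{dk2017}. Letting $k\to\infty$ and using that $x$ is a Lebesgue point of $\nabla u$ recovers $|\nabla u(x)|$ on the left. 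The three sub-cases differ only in the choice of the average $E(r)$: an $L^1$-mean for $p\ge 2$, an $L^{2-p}$-mean for $3/2\le p<2$, and an $L^{(p-1)^2/2}$-mean for $1<p<3/2$, a choice forced by the exponent appearing in Lemma \ref{lem nabla w infty} in each range.

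The principal obstacle I expect is the very singular regime $1<p<3/2$, where the comparison (a) degenerates as $|\nabla u|\to 0$, the mean $L^{(p-1)^2/2}$ has sub-unit exponent, and the $L^\infty$-bound on $\nabla w$ requires a more intricate interpolation. The delicate technical point is verifying that all exponents balance so that the recursion is genuinely self-improving, i.e.\ that after applying Jensen's inequality to reduce higher averages to the low average used in $E(r)$, the coefficient of $E(r)$ on the right-hand side can still be made strictly less than $1$. A secondary subtlety is that the $\omega(r)$-contribution carries $E(r)$ itself as a factor, so the recursion is of perturbation type; the Dini summation lemma handles this uniformly across the three ranges. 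Once these exponent bookkeeping steps are carried out, the three estimates \eqref{p>2 nabla u point}, \eqref{3/2<p<2 nabla u point} and \eqref{1<p<3/2 nabla u point} emerge from the same unified iteration scheme.
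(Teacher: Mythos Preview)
Your two-step comparison and the per-scale ingredients (a)--(c) are exactly right, and match the paper's Lemmas \ref{lem-Dv-Dw}, \ref{lem nabla w infty}, \ref{lem-comparison} and Proposition \ref{prop-phi-Du}. The gap is in the iteration. The recursion you write,
\[
E(\tau r)\le C\tau^\alpha E(r)+C\omega(r)^\theta E(r)+C\Phi(|\mu|,r),
\]
cannot hold for the \emph{mean} $E(r)=\big(\fint_{B_r}(|\nabla u|+s)^\gamma\big)^{1/\gamma}$: if $\nabla u$ is a nonzero constant then $E(\tau r)=E(r)$, so no $\tau^\alpha$-decay is available for $E$. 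Only the \emph{excess} $\phi(r)=\inf_{\mathbf q}\fint_{B_r}|\nabla u-\mathbf q|$ (or its $\gamma_0$-variant $\psi$) obeys this decay. The correct per-scale inequality, as in Proposition \ref{prop-phi-Du}, is
\[
\phi(\tau r)\le C\tau^\alpha \phi(r)+C\omega(r)^\theta\,S(r)+C\Phi(|\mu|,r),
\]
with the full mean $S(r)$ multiplying $\omega(r)^\theta$. This mismatch is not cosmetic: summing in $j$ gives $\sum\phi_j$ controlled by $\phi_{j_0}+\sum\Phi_j+\sum\omega(r_j)^\theta S_j$, and the last sum cannot be absorbed by a Dini lemma alone because the $S_j$ are not a priori bounded and are not the quantity being iterated.

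The paper closes this gap by a trichotomy in $|\nabla u(x_0)|$ versus the $S_j$ (an idea from \cite{dz2024}): either $|\nabla u(x_0)|\le S_{j_0}$ and one is done; or there is a first $j_1>j_0$ with $S_{j_1+1}\ge|\nabla u(x_0)|$, so for $j_0\le j\le j_1-1$ one replaces each $S_j$ by $|\nabla u(x_0)|$; or $S_j<|\nabla u(x_0)|$ for all $j\ge j_0$ and one passes to the limit using $\mathbf q_{x_0,\rho}\to\nabla u(x_0)$. In the last two cases the Dini condition, after choosing $j_0$ large so that $\sum_{j\ge j_0}\omega(r_j)^\theta$ is small, makes the coefficient of $|\nabla u(x_0)|$ less than $1/10$ and it is absorbed into the left side. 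You should replace the $E$-recursion by the $\phi$-recursion and insert this case analysis; then the argument goes through in all three ranges, with the extra nonlinear term $\frac{|\mu|(B_{2r})}{r^{n-1}}S_j^{2-p}$ in (ii)--(iii) (see \eqref{nabla u-nabla w gamma}, \eqref{nabla u-nabla w gamma02}) handled the same way.
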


\begin{remark}
Our pointwise bounds in Theorem \ref{main result Pwg} hold for the range $1<p<\infty$ under the assumption that the modulus of continuity of the coefficient $a(x)$ in the $L^2$-mean sense satisfies the Dini conditions \eqref{Dini-1} and \eqref{Dini-2}, respectively. This condition is weaker than  those in previous works as mentioned above. Investigating the scenario where the modulus of continuity of the coefficient $a(x)$ meets the Dini condition in the $L^1$-mean sense, as explored in the linear case by \cite{dk2017}, is quite challenging. This issue is a subject we plan to address in our future research endeavors.
\end{remark}

The second result is about the continuity of $\nabla u$ as follows.

\begin{theorem}\label{main-result-cont}(Gradient continuity). 
Under the same conditions as in Theorem \ref{main result Pwg}, $\nabla u$ is continuous in $\Omega$ if the following assumptions hold:

(i) the functions 
\begin{equation}\label{assump-Wp}
x\rightarrow{\bf W}_{\frac{1}{p},p}^R(|\mu|)(x)~converge~locally~uniformly~to~zero~in~\Omega~as~R\rightarrow0,\quad \mbox{when}~p\geq2;
\end{equation}

(ii) the functions
\begin{equation}\label{assump Ip}
    x\rightarrow{\bf I}_1^R(|\mu|)(x)~converge~locally~uniformly~to~zero~in~\Omega~as~R\rightarrow 0,\quad \mbox{when}~ 1<p<2.
\end{equation}
\end{theorem}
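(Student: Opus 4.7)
The plan is to upgrade the pointwise bounds of Theorem \ref{main result Pwg} to a continuity statement by showing that the averaged gradient $x\mapsto(\nabla u)_{B_r(x)}$ forms a Cauchy family as $r\to 0$, uniformly on compact subsets of $\Omega$. Since every Lebesgue point of $\nabla u$ satisfies $(\nabla u)_{B_r(x)}\to\nabla u(x)$, a locally uniform Cauchy property produces a continuous representative, which by the pointwise bounds of Theorem \ref{main result Pwg} must agree with the pointwise values at every $x\in\Omega$.

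To quantify the Cauchy property, I would fix $x\in\Omega$, a radius $R$ with $B_R(x)\subset\Omega$, and dyadic scales $r_j:=R/2^j$, $B_j:=B_{r_j}(x)$. On each $B_j$ freeze the coefficient and solve the homogeneous frozen problem
\begin{equation*}
-\Div\bigl((a)_{B_j}(|\nabla w_j|^2+s^2)^{(p-2)/2}\nabla w_j\bigr)=0\text{ in }B_j,\qquad w_j=u\text{ on }\partial B_j.
\end{equation*}
Combining the $L^\infty$ gradient bound of Lemma \ref{lem nabla w infty} with the comparison estimate of Lemma \ref{lem-Dv-Dw} (applied from $B_j$ to $B_{j+1}$), one controls $|(\nabla u)_{B_{j+1}}-(\nabla u)_{B_j}|$ by a sum of three contributions: a classical $C^{1,\alpha}$-decay term for $\nabla w_j$ (summable by the choice of a sufficiently small scale ratio), an oscillation term of the form $\omega(r_j)^{2/p}$ (respectively $\omega(r_j)$ in the singular case) multiplied by an $L^q$-average of $|\nabla u|+s$ on $B_j$, and a data term proportional to $\bigl(|\mu|(B_j)/r_j^{n-p}\bigr)^{1/(p-1)}$ when $p\geq 2$ and to $|\mu|(B_j)/r_j^{n-1}$ when $1<p<2$.

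Telescoping from $r_k$ up to $R$ then yields, in the degenerate range $p\geq 2$,
\begin{equation*}
\bigl|(\nabla u)_{B_{r_k}(x)}-(\nabla u)_{B_R(x)}\bigr|\leq C\,{\bf W}_{1/p,p}^{R}(|\mu|)(x)+C\,M(x,R)\int_0^R\frac{\omega(r)^{2/p}}{r}\,dr,
\end{equation*}
where $M(x,R)$ denotes the uniform-in-scale $L^q$-average bound of $|\nabla u|+s$ furnished by Theorem \ref{main result Pwg}, and with the Riesz potential ${\bf I}_1^R$ and Dini integrand $\omega(r)/r$ replacing the Wolff pieces in the singular range $1<p<2$. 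By hypothesis \eqref{assump-Wp} or \eqref{assump Ip} the potential term tends to $0$ as $R\to 0$ locally uniformly in $x$, and by \eqref{Dini-1}--\eqref{Dini-2} the $\omega$-tail vanishes as well. This yields the desired locally uniform Cauchy property for $(\nabla u)_{B_r(\cdot)}$ and hence continuity of $\nabla u$ on $\Omega$.

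The principal obstacle I foresee is the precise exponent bookkeeping in the one-step comparison: ensuring the coefficient-oscillation term enters with exactly the exponent $2/p$ (for $p\geq 2$) or $1$ (for $1<p<2$), so as to match Assumption \ref{main-assump} and produce a summable series. A related technical point, present only in the singular range $1<p<2$, is that the error is naturally bounded in terms of an $L^q$-average of $|\nabla u|+s$ with $q<1$, as already visible on the right-hand sides of \eqref{3/2<p<2 nabla u point}--\eqref{1<p<3/2 nabla u point}; uniform control of these sub-unit averages across all dyadic scales requires an iteration argument parallel to the one used for the pointwise estimate itself, whose output must then be fed back into the telescoping sum.
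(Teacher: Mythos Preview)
Your approach is correct and follows the same line as the paper: iterate the one-step excess decay of Proposition \ref{prop-phi-Du} (which already packages the comparison Lemmas \ref{lem-Dv-Dw} and \ref{lem-comparison} together with the $L^\infty$ bound of Lemma \ref{lem nabla w infty}) to show that the local averages of $\nabla u$ form a locally uniformly Cauchy family, with the telescoping sum controlled by the truncated potential and the Dini tail of $\omega$. The only difference is that the paper first records the stronger quantitative two-point estimate of Theorem \ref{thm continuity}, which involves the weighted potentials $\tilde{\bf W}^\rho$, $\tilde{\bf I}^\rho$ and a short dominated-convergence step to see they vanish as $\rho\to 0$; your more direct telescoping bypasses this detour (and one small clean-up: your frozen reference $w_j$ collapses the paper's two-step $u\to w\to v$ into one, so the correct citation for your one-step bound is Proposition \ref{prop-phi-Du} rather than Lemma \ref{lem-Dv-Dw} alone).
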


\begin{remark}\label{rmk-general}
In fact, by using a similar argument, our results in Theorems \ref{main result Pwg} and \ref{main-result-cont} can be applied to the quasilinear elliptic equation \eqref{general-eq} with measure data.
The vector field $A=(A_1,\dots,A_n):\Omega\times\mathbb R^n\rightarrow\mathbb R^n$ satisfies the assumptions as follows: there exist constants $\lambda\geq1$, $s\geq0$, and $p>1$ such that
\begin{equation*}
|A(x,\xi)|\leq \lambda(s^2+|\xi|^2)^{(p-1)/2},\quad|D_{\xi}A(x,\xi)|\leq \lambda(s^2+|\xi|^2)^{(p-2)/2},
\end{equation*}
\begin{equation*}
\langle D_{\xi}A(x,\xi)\eta,\eta\rangle\geq\lambda^{-1}(s^2+|\xi|^2)^{(p-2)/2}|\eta|^2,\quad\forall~\eta=(\eta_1,...,\eta_n)\in\mathbb R^n,
\end{equation*}
and for any $x_0\in\Omega$ and $r\in(0,1)$, there exists a $\bar A_{x_0,r}(\xi)$ depending on $x_0$ and $r$ such that 
\begin{equation*}
\tilde\omega(r):=\sup_{\substack{x_0\in\Omega\\ \xi\in\mathbb R^n\setminus\{0\}}}\Bigg(\fint_{B_r(x_0)}\Big|\frac{A(x,\xi)-\bar A_{x_0,r}(\xi)}{(|\xi|^2+s^2)^{(p-1)/2}}\Big|^2\ dx\Bigg)^{\frac{1}{2}},\quad\forall~B_r(x_0)\subset\Omega
\end{equation*}
satisfies 
\begin{equation*}
\int_0^1\frac{(\tilde\omega(r))^{\frac{2}{p}}}{r}\ dr<+\infty\quad \mbox{if}\quad p\geq 2,
\end{equation*}
and 
\begin{equation*}
\int_0^1\frac{\tilde\omega(r)}{r}\ dr<+\infty\quad \mbox{if}\quad 1<p<2.
\end{equation*}
\end{remark}

\begin{remark}
In the proof of Theorem \ref{main-result-cont}, we establish an a modulus of continuity estimate of the gradient as stated in Theorem \ref{thm continuity}. Kuusi and Mingione \cite[Theorem 1.4,Theorem 1.6]{km2012-2} have demonstrated a similar result under the assumption that $(\tilde\omega(r))^{2/p}$ is Dini–H\"{o}lder of order $\tilde\alpha$ satisfying
\begin{equation*}
\sup_{r}\int_0^r\frac{(\tilde\omega(\rho))^{\frac{2}{p}}}{\rho^{\tilde\alpha}}\ \frac{d\rho}{\rho}<+\infty\quad \mbox{if}\quad p\geq 2,
\end{equation*}
and that $(\tilde\omega(r))^{\sigma}$ is Dini–H\"{o}lder of order $\tilde\alpha$ for some $\sigma\in(0,1)$, satisfying
\begin{equation*}
\sup_{r}\int_0^r\frac{(\tilde\omega(\rho))^{\sigma}}{\rho^{\tilde\alpha}}\ \frac{d\rho}{\rho}<+\infty\quad \mbox{if}\quad 2-1/n<p\leq 2.
\end{equation*}
Here, $\tilde\alpha\in(0,\alpha_M)$ and $\alpha_M\in(0,1]$. In our setting, we allow $\sigma=1$ and $1<p\leq2-1/n$.
\end{remark}

Next we give several consequences of Theorem \ref{main-result-cont}.
We recall that a function $f$ defined in $\Omega$ belongs to the Lorentz space $L^{n,\gamma}$ for $\gamma>0$ if and only if 
\begin{equation*}
\left(\int_0^\infty\big(t^n|\{x\in\Omega: |f(x)|>t\}|\big)^{\gamma/n}\frac{dt}{t}\right)^{1/\gamma}<\infty.
\end{equation*}
In \cite{Kuusi2014}, Kuusi and Mingione proved that the gradient of the solution to the $p$-Laplacian system with $p>1$
\begin{equation*}
-\Div\big(a(x)(|\nabla u|^2+s^2)^{\frac{p-2}{2}}\nabla u\big)=F\quad\mbox{in}~\Omega
\end{equation*}
is continuous by assuming that $a(x)$ is Dini continuous and $F\in L^{n,1}$ locally in $\Omega$. Dong and Zhu \cite{dz2024} derived the continuity of $\nabla u$ to the solution of \eqref{general-eq} with $p\in(1,2)$. By \cite[Lemma 3]{MR2010} and Theorem \ref{main-result-cont}, we obtain the following corollary.
\begin{corollary}\label{coro Lorentz}(Gradient continuity via Lorentz spaces). Let $p\in(1,\infty)$ and $u\in W_{\text{loc}}^{1,p}(\Omega)$ be a solution of \eqref{p-laplace}. Suppose that Assumption \ref{main-assump} is satisfied and $\mu$ meets the following condition:
\begin{equation*}
\mbox{when}~p\geq2, ~\mu\in L^{n,\frac{1}{p-1}} ~\mbox{holds~locally~in~} \Omega
\end{equation*}
and
\begin{equation*}
\mbox{when}~1<p<2, ~\mu\in L^{n,1} ~\mbox{holds~locally~in~} \Omega.
\end{equation*}
Then $\nabla u$ is continuous in $\Omega$.
\end{corollary}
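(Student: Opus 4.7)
The plan is to reduce Corollary \ref{coro Lorentz} directly to Theorem \ref{main-result-cont} by reading off the required locally uniform vanishing of the Wolff (resp.\ Riesz) potential straight from the Lorentz-space hypothesis on $\mu$. The bridge is the Wolff/Riesz potential estimate of \cite[Lemma 3]{MR2010}, which quantifies how locally Lorentz-summable data force the associated nonlinear potential to decay uniformly on compact subsets.

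First, for $p \geq 2$, I would fix a compact set $K \subset \Omega$ and an auxiliary open set $K'$ with $K \subset K' \subset\subset \Omega$. Applying \cite[Lemma 3]{MR2010} to $\mu \in L^{n,1/(p-1)}(K')$ yields an inequality of the form
\begin{equation*}
\sup_{x \in K} {\bf W}_{1/p,p}^R(|\mu|)(x) \leq C\bigl(n,p,\dist(K,\partial K')\bigr)\,\varepsilon(R),
\end{equation*}
with $\varepsilon(R) \to 0$ as $R \to 0$, because the Lorentz (quasi)norm is absolutely continuous with respect to restriction to balls of vanishing radius. The Lorentz second exponent $1/(p-1)$ is the sharp one here: the $(p-1)$-th root in the definition \eqref{W potential} converts the $L^n$-Marcinkiewicz scaling of the slice $|\mu|(B_t(x))/t^{n-1}$ into an absolutely convergent $dt/t$ integral exactly at this exponent. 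Hence assumption \eqref{assump-Wp} is met, and Theorem \ref{main-result-cont}(i) delivers the continuity of $\nabla u$ in $\Omega$.

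Second, when $1 < p < 2$ the relevant potential in \eqref{assump Ip} is the linear Riesz potential ${\bf I}_1^R$. The local hypothesis $\mu \in L^{n,1}$ is the classical $L^{n,1}$-criterion of Stein, which is also the endpoint $p-1=1$ of the same lemma, and it gives
\begin{equation*}
\lim_{R \to 0}\,\sup_{x \in K} {\bf I}_1^R(|\mu|)(x) = 0
\end{equation*}
on every compact $K \subset \Omega$. This is exactly assumption \eqref{assump Ip}, so Theorem \ref{main-result-cont}(ii) closes the argument.

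There is no substantial obstacle here; the corollary is essentially a bookkeeping translation between Lorentz summability of $\mu$ and uniform smallness of the nonlinear potential. The only point that needs care is fixing the correct Lorentz second exponent in each regime, which is dictated by the power appearing inside \eqref{W potential} and \eqref{I potical}, respectively.
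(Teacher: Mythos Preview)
Your proposal is correct and follows exactly the paper's own argument: the corollary is deduced from Theorem \ref{main-result-cont} via \cite[Lemma 3]{MR2010}, which converts the local Lorentz assumption on $\mu$ into the required locally uniform vanishing of ${\bf W}_{1/p,p}^R(|\mu|)$ (for $p\geq 2$) or ${\bf I}_1^R(|\mu|)$ (for $1<p<2$). One minor typo: in your last sentence the reference should be \eqref{I potential}, not ``\texttt{I potical}''.
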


\begin{corollary}(Gradient continuity via density). 
Let $p\in(1,\infty)$ and $u\in W_{\text{loc}}^{1,p}(\Omega)$ be a solution of \eqref{p-laplace}. Suppose that Assumption \ref{main-assump} is satisfied and $\mu$ satisfies 
\begin{equation*}
    |\mu|(B_r(x))\leq Cr^{n-1}g(r)
\end{equation*}
    for any ball $B_r(x)\subset\subset\Omega$, where $C$ is a nonnegative constant and $g:[0,\infty)\rightarrow[0,\infty)$ is a function satisfying the following condition: for some $R>0$, 
    \begin{equation*}
        \int_0^R(g(t))^{\frac{1}{p-1}}\frac{dt}{t}<\infty\quad \mbox{if}\quad p\geq 2
    \end{equation*}
    and 
    \begin{equation*}
        \int_0^Rg(t)\frac{dt}{t}<\infty \quad \mbox{if}\quad 1<p<2.
    \end{equation*}
    Then $\nabla u$ is continuous in $\Omega$.
\end{corollary}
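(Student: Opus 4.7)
The plan is to reduce this corollary directly to Theorem \ref{main-result-cont} by estimating the relevant potentials pointwise in terms of the gauge function $g$, so that the local uniform decay hypotheses \eqref{assump-Wp} and \eqref{assump Ip} become immediate consequences of the Dini-type integrability of $g$ or $g^{1/(p-1)}$.

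First I would observe that with $\beta=1/p$ the exponent in the denominator of the Wolff potential \eqref{W potential} satisfies $n-\beta p=n-1$. So, for any $x$ with $B_R(x)\subset\subset\Omega$ and $p\geq2$, the density assumption $|\mu|(B_t(x))\leq C t^{n-1}g(t)$ yields
\begin{equation*}
{\bf W}_{\frac{1}{p},p}^R(|\mu|)(x)
=\int_0^R\Bigl(\frac{|\mu|(B_t(x))}{t^{n-1}}\Bigr)^{\frac{1}{p-1}}\frac{dt}{t}
\leq C^{\frac{1}{p-1}}\int_0^R (g(t))^{\frac{1}{p-1}}\frac{dt}{t},
\end{equation*}
while for $1<p<2$ the Riesz potential \eqref{I potential} satisfies
\begin{equation*}
{\bf I}_1^R(|\mu|)(x)=\int_0^R\frac{|\mu|(B_t(x))}{t^{n-1}}\frac{dt}{t}\leq C\int_0^R g(t)\frac{dt}{t}.
\end{equation*}
Since the right-hand sides are independent of $x$ and, by the integrability hypotheses on $g$, tend to $0$ as $R\to 0$, we obtain local uniform convergence to $0$ of the truncated potentials as $R\to 0$ on any compact subset of $\Omega$.

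Finally, I would invoke Theorem \ref{main-result-cont}: in case $p\geq 2$ the first decay supplies \eqref{assump-Wp}, and in case $1<p<2$ the second supplies \eqref{assump Ip}. Assumption \ref{main-assump} being already part of the corollary's hypotheses, Theorem \ref{main-result-cont} applies and yields that $\nabla u$ is continuous in $\Omega$. There is no real obstacle here; the content is essentially the pointwise bound of the nonlinear potentials by the radial majorant $g$, and the corollary is thus a transparent reformulation of Theorem \ref{main-result-cont} under a concrete density hypothesis on $\mu$.
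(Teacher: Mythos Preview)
Your proposal is correct and matches the paper's intended approach: the paper offers no explicit proof for this corollary, presenting it as an immediate consequence of Theorem \ref{main-result-cont}, and your argument supplies exactly the obvious verification that the density bound $|\mu|(B_t(x))\leq Ct^{n-1}g(t)$ forces the truncated potentials ${\bf W}_{1/p,p}^R(|\mu|)$ and ${\bf I}_1^R(|\mu|)$ to vanish locally uniformly as $R\to 0$.
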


We shall derive the following gradient H\"{o}lder continuity from an a modulus of continuity estimate of the gradient in Theorem \ref{thm continuity}. This generalizes \cite[Corollary 1.7]{dz2024} by extending the range of $p$ from $p\in(1,2)$ to $p\in(1,\infty)$, and improves the result in \cite[Theorem 5.3]{l1993}, who proved $u\in C_{\text{loc}}^{1,\beta_1}(\Omega)$ for some $\beta_1\in(0,1)$ depending on $\beta\in(0,1]$. 

\begin{corollary}\label{coro C^1,n}(Gradient H\"{o}lder continuity via density)
Let $p\in(1,\infty)$ and $u\in W_{loc}^{1,p}(\Omega)$ be a solution of \eqref{p-laplace}. Under the conditions in  Assumption \ref{main-assump}, if there exists a constant $\alpha=\alpha(n, p, \lambda)\in(0,1)$ such that for some constants $C>0$ and $\beta\in(0,\alpha)$, 
    \begin{equation*}
        \omega(r)\leq Cr^{\beta}\quad when \quad r>0
    \end{equation*}
 and for any $B_\rho(x)\subset\subset\Omega$,
 \begin{equation*}
|\mu|(B_\rho(x))\leq  C\rho^{n-1+\beta(p-1)} \quad \mbox{when}~p\geq 2; \quad |\mu|(B_\rho(x))\leq C\rho^{n-1+\beta} \quad \mbox{when}~1<p<2
 \end{equation*}
 hold, then $u\in C_{\text{loc}}^{1,\beta}(\Omega)$. 
\end{corollary}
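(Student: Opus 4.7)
The plan is to deduce the corollary from the quantitative modulus-of-continuity estimate for $\nabla u$ established in Theorem \ref{thm continuity} by a direct computation of the relevant potentials under the hypothesized density bounds on $|\mu|$ and on $\omega$. The only substantive work is to verify that the two ``data'' inputs that drive Theorem \ref{thm continuity}, namely the Wolff/Riesz potential and the coefficient oscillation $\omega$, both exhibit clean power-type decay of order $r^{\beta}$.

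First, for $p\geq 2$, using $|\mu|(B_t(x))\leq Ct^{n-1+\beta(p-1)}$ for every $B_t(x)\subset\subset\Omega$, one computes
\begin{equation*}
\mathbf{W}_{\frac{1}{p},p}^{r}(|\mu|)(x)=\int_{0}^{r}\left(\frac{|\mu|(B_t(x))}{t^{n-1}}\right)^{\frac{1}{p-1}}\frac{dt}{t}\leq C\int_{0}^{r}t^{\beta}\frac{dt}{t}=\frac{C}{\beta}\,r^{\beta}.
\end{equation*}
Similarly, for $1<p<2$, from $|\mu|(B_t(x))\leq Ct^{n-1+\beta}$ one obtains
\begin{equation*}
\mathbf{I}_{1}^{r}(|\mu|)(x)=\int_{0}^{r}\frac{|\mu|(B_t(x))}{t^{n-1}}\,\frac{dt}{t}\leq\frac{C}{\beta}\,r^{\beta},\qquad[\mathbf{I}_{1}^{r}(|\mu|)(x)]^{\frac{1}{p-1}}\leq Cr^{\frac{\beta}{p-1}}\leq C r^{\beta},
\end{equation*}
the last inequality holding because $p-1<1$ makes $\beta/(p-1)\geq\beta$. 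Thus in both regimes the potential contribution to the modulus of continuity is controlled by $r^{\beta}$, and by hypothesis $\omega(r)\le C r^\beta$ so the coefficient-oscillation contribution is of the same size.

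Next I would feed these bounds into the modulus-of-continuity estimate from Theorem \ref{thm continuity}. That estimate is obtained through a Campanato-type iteration on dyadic balls, at each scale freezing the coefficient to $(a)_{B_r}$, invoking the comparison estimate of Lemma \ref{lem-Dv-Dw} together with the a priori bound $\|\nabla w\|_{L^\infty}$ from Lemma \ref{lem nabla w infty} for the homogeneous frozen-coefficient problem, and using the interior $C^{1,\alpha_0}$ regularity of such solutions (where $\alpha_0=\alpha_0(n,p,\lambda)\in(0,1)$ is the Lieberman-type Hölder exponent). One chooses $\alpha:=\alpha_0$, so the constraint $\beta<\alpha$ guarantees that the homogeneous contribution $(r/R)^{\alpha_0}$ decays strictly faster than the data $r^{\beta}$ and the iteration yields excess decay of order $r^{\beta}$ for the $L^1$-averaged oscillation of $\nabla u$.

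The output of that iteration, under the two bounds displayed above, is
\begin{equation*}
\fint_{B_r(x_0)}|\nabla u-(\nabla u)_{B_r(x_0)}|\,dy\leq C r^{\beta}\qquad\text{for all small }r,\ x_0\in\Omega'\subset\subset\Omega.
\end{equation*}
By Campanato's characterization of Hölder classes this gives $\nabla u\in C^{0,\beta}_{\mathrm{loc}}(\Omega)$, i.e. $u\in C^{1,\beta}_{\mathrm{loc}}(\Omega)$. The main obstacle, once Theorem \ref{thm continuity} is in hand, is a bookkeeping one: in the regime $1<p<2$ the pointwise estimates of Theorem \ref{main result Pwg} involve sub-linear powers $(|\nabla u|+s)^{2-p}$ or $(|\nabla u|+s)^{(p-1)^2/2}$ on the right, and one must check that the corresponding decay $r^{\beta/(p-1)}$ coming from the Riesz potential indeed dominates, so that the iteration is not limited by the degenerate exponent $1/(p-1)$. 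The density assumption $|\mu|(B_\rho)\leq C\rho^{n-1+\beta}$ (rather than $\rho^{n-1+\beta(p-1)}$) is calibrated exactly so that this dominance holds.
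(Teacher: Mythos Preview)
Your approach is exactly the one the paper indicates: the corollary is stated to follow from the modulus-of-continuity estimate Theorem~\ref{thm continuity}, and your computation of the Wolff and Riesz potentials under the density hypotheses is correct and gives the needed $r^{\beta}$ decay for the data terms $\tilde{\mathbf W}^{\rho}_{1/p,p}$ and $\tilde{\mathbf I}^{\rho}_{1}$ (after choosing $\alpha_1\in(\beta,\alpha)$ so that the geometric series in \eqref{def-tildeW}--\eqref{def tilde I} converge).

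There is one point you gloss over that deserves care in the range $p>2$. The coefficient-oscillation term entering Theorem~\ref{thm continuity}(i) is $\int_0^\rho \tilde\omega(t)/t\,dt$, and by \eqref{def-tilde-omega} the function $\tilde\omega$ is built from $(\omega(\cdot))^{2/p}$, not from $\omega(\cdot)$. Under the hypothesis $\omega(r)\le Cr^{\beta}$ this yields
\[
\int_0^\rho\frac{\tilde\omega(t)}{t}\,dt\le C\rho^{2\beta/p},
\]
so a direct application of Theorem~\ref{thm continuity} produces only $\nabla u\in C^{0,2\beta/p}_{\rm loc}$ when $p>2$, not $C^{0,\beta}_{\rm loc}$. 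Your sentence ``the coefficient-oscillation contribution is of the same size'' is therefore not justified for $p>2$. For $1<p\le 2$ the corresponding object is $\overline\omega$ built from $\omega$ itself (see \eqref{def overline omega}), so there the exponent $\beta$ is recovered and your argument is complete. To obtain the full exponent $\beta$ also for $p>2$ one needs either to sharpen the comparison estimate Lemma~\ref{lem-Dv-Dw} using that $\omega(r)\le Cr^{\beta}$ forces $a\in C^{0,\beta}$ pointwise (rather than merely $L^2$-DMO), or to accept the smaller H\"older exponent; the paper does not supply this step explicitly either.
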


\subsection{Boundary gradient estimates}\label{boundary-result}
\begin{definition}\label{def chi}
Let $\Omega$ be a domain in $\mathbb R^n$. For any $x_0=(x'_0,x_{0n})\in\partial\Omega$, if there exist a constant $R_0\in (0,1]$, a $C^{1,\text{DMO}}$ function (i.e.,$C^1$ function whose first derivatives are of Dini mean oscillation) $\chi$: $\mathbb R^{n-1}\rightarrow \mathbb R$, and a coordinate system depending on $x_0$ such that for any $r\in(0,R_0)$,
\begin{equation}\label{sup chi-chi}
\varrho_0(r):=\sup_{x'\in B'_{r}(x'_0)}\big(\fint_{B'_r(x'_0)}|\nabla_{x'}\chi(x')-(\nabla _{x'}\chi)_{B'_{r}(x'_0)}|^2\ dx'\big)^{\frac{1}{2}}
\end{equation}
satisfies the Dini condition
\begin{equation*}
    \int_0^1\frac{(\varrho_0(r))^{\frac{2}{p}}}{r}dr<+\infty \quad \mbox{if}\quad p\geq2
\end{equation*}
and
\begin{equation*}
    \int_0^1\frac{\varrho_0(r)}{r}dr<+\infty\quad \mbox{if}\quad 1<p<2,
\end{equation*}
and in the new coordinate system, 
\begin{equation*}
    |\nabla_{x'}\chi(x'_0)|=0, \quad \Omega_{R_0}(x_0)=\{x\in B_{R_0}(x_0): x_n>\chi(x')\},
\end{equation*}
where $\Omega_{R_0}(x_0)=\Omega\cap B_{R_0}(x_0)$, then we say that $\Omega$ has $C^{1,\text{DMO}}$ boundary.
\end{definition}

We additionally define
\begin{equation*}
    \varrho(r):=\sup_{x\in \bar{\Omega}}\Bigg(\fint_{\Omega_r(x)}|a(y)-(a)_{\Omega_r(x)}|^2\ dy \Bigg)^{\frac{1}{2}},
\end{equation*}
where 
\begin{equation*}
    (a)_{\Omega_r(x)}:=\fint_{B_r(x)}a=\frac{1}{|\Omega_r(x)|}\int_{\Omega_r(x)}a, \quad x\in \overline{\Omega}.
\end{equation*}
\begin{assumption}\label{assumption 2}
Assume that $a$ satisfies \eqref{condition} and the following conditions hold:
\begin{equation*}
  \int_0^1\frac{(\varrho(r))^{\frac{2}{p}}}{r}\ dr<+\infty\quad \mbox{if} \quad p\geq 2,  
\end{equation*}
and 
\begin{equation*}
    \int_0^1\frac{\varrho(r)}{r}\ dr<+\infty \quad\mbox{if}\quad 1<p<2.
\end{equation*}
\end{assumption}
We give the global pointwise gradient estimates as follows.

\begin{theorem}(Boundary pointwise gradient estimate)\label{bou pointwise estiamte}
Let $u$ be a solution to \eqref{p-laplace} with a Dirichlet boundary condition $u=0$ on $\partial \Omega$. Assumption \ref{assumption 2} is satisfied and $\Omega$ has a  $C^{1,\text{DMO}}$ boundary  characterized by $R_0$ and $\varrho_0$  as in Definition \ref{def chi}. Then for any Lebesgue point $x\in\Omega$ of the vector-valued function $\nabla u$ and $R\in(0,1]$, there exists a constant $C=C(n,p,\lambda,\varrho,R_0,\varrho_0)$ such that the following estimates hold:
    
    (i) when $p\geq2$, then
    \begin{equation}\label{bou p>2 point}
        |\nabla u(x)|\leq C{\bf W}_{\frac{1}{p},p}^R(|\mu|)(x)+C\fint_{\Omega_R(x)}\big(|\nabla u(y)|+s\big)\ dy;
    \end{equation}
   
    (ii) when $3/2\leq p<2$, then
    \begin{equation*}
        |\nabla u(x)|\leq C[{\bf I}_1^R(|\mu|(x)]^{\frac{1}{p-1}}+C\Bigg(\fint_{\Omega_R(x)}\big(|\nabla u(y)|+s\big)^{2-p}\ dy\Bigg)^{\frac{1}{2-p}};
    \end{equation*}

    (iii) when $1<p<3/2$, then
    \begin{equation*}
        |\nabla u(x)|\leq C[{\bf I}_1^R(|\mu|)(x)]^{\frac{1}{p-1}}+C\Bigg(\fint_{\Omega_R(x)}\big(|\nabla u(y)|+s\big)^{\frac{(p-1)^2}{2}}\ dy\Bigg)^{\frac{2}{(p-1)^2}}.
    \end{equation*}
\end{theorem}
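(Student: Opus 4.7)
The plan is to reduce the boundary estimate to essentially the same scheme used for the interior result (Theorem \ref{main result Pwg}), once the boundary has been flattened and boundary versions of the comparison estimate (Lemma \ref{lem-Dv-Dw}) and of the $L^\infty$ gradient bound for the frozen problem (Lemma \ref{lem nabla w infty}) have been established. First I would localize around a point $x_0\in\partial\Omega$ using Definition \ref{def chi} and apply the diffeomorphism $y'=x'$, $y_n=x_n-\chi(x')$, which maps $\Omega_{R_0}(x_0)$ onto a half-ball and turns the Dirichlet condition $u=0$ on $\partial\Omega$ into a zero Dirichlet condition on the flat portion. Equation \eqref{p-laplace} transforms into one of the same $p$-Laplace type, whose new coefficient $\tilde a(y)$ is built from $a$ and the Jacobian of the change of variables. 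Since $\nabla_{x'}\chi(x'_0)=0$ and $\nabla_{x'}\chi$ has $L^2$-mean oscillation bounded by $\varrho_0$, a direct computation shows the $L^2$-mean oscillation of $\tilde a$ is controlled by $\varrho(r)+\varrho_0(r)$ plus a harmless lower-order term, so Assumption \ref{main-assump} is inherited in the flattened setting.

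Next I would develop boundary analogues of the two key interior lemmas on the half-ball. For a point $x\in\Omega$ and a radius $r$ with $B_r(x)$ possibly meeting $\partial\Omega$, introduce $v$ solving the homogeneous equation on $\Omega_r(x)$ with $v=u$ on $\partial B_r(x)\cap\Omega$ and $v=0$ on $\partial\Omega\cap B_r(x)$, and then $w$ solving the frozen-coefficient problem on $\Omega_r(x)$ with the same mixed boundary data. Since $v-w$ vanishes on the flat portion of the boundary, the integration-by-parts underlying Lemma \ref{lem-Dv-Dw} goes through verbatim and yields the same comparison estimate in terms of $\|\nabla w\|_{L^\infty(\Omega_{r/2}(x))}$. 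For the boundary substitute of Lemma \ref{lem nabla w infty}, I would combine odd reflection across the flat boundary (possible because of the zero Dirichlet data and the autonomous, in $y$, frozen operator) with the Uhlenbeck/Tolksdorf/Lieberman interior $C^{1,\alpha}$ theory applied to the reflected solution, obtaining $\|\nabla w\|_{L^\infty(\Omega_{r/2}(x))}\leq C\bigl(\fint_{\Omega_r(x)}(|\nabla w|+s)^q\bigr)^{1/q}$ for the exponents $q=1$, $q=2-p$, or $q=(p-1)^2/2$ matching the three regimes of the theorem.

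With the boundary comparison estimate and the boundary $L^\infty$ gradient bound in hand, the remainder of the argument is formally identical to the interior one: I run the same Campanato-type iteration on the geometric sequence of half-balls $\Omega_{r_k}(x)$, $r_k=\sigma^k R$, estimating the oscillation of $\nabla u$ at scale $r_k$ by an excess decay for $\nabla w$ plus the comparison error, and summing these errors produces $\mathbf{W}_{1/p,p}^R(|\mu|)(x)$ when $p\geq 2$ and $[\mathbf{I}_1^R(|\mu|)(x)]^{1/(p-1)}$ when $1<p<2$. The base-scale term $\fint_{\Omega_R(x)}(|\nabla u|+s)^q\,dy$ enters from the initial step of the iteration. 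I expect the main obstacle to be the boundary $L^\infty$ estimate for $\nabla w$ in the singular range $1<p<2$ with the small exponent $q=(p-1)^2/2$; one must verify that Lieberman-type boundary $C^{1,\alpha}$ theory for the frozen, homogeneous $p$-Laplace-type operator is robust enough to allow such a low-integrability right-hand side, and that the reflection procedure is compatible with the anisotropy introduced by the boundary flattening (for which the autonomy of the frozen coefficient is critical). The verification that Assumption \ref{assumption 2} together with $\varrho_0$ gives a Dini modulus for the flattened coefficient in the $L^2$-mean sense is technically involved but follows the same lines as in the linear case.
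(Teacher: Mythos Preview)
Your proposal follows essentially the same strategy as the paper: flatten the boundary, prove boundary analogues of the comparison estimate and the $L^\infty$ gradient bound for the homogeneous problem, and then run the same iteration as in the interior case. Two points deserve mention where the paper is more explicit than your outline. First, the boundary oscillation the paper iterates is not the full $\inf_{\mathbf q\in\mathbb R^n}\fint|\nabla u-\mathbf q|$ but the anisotropic quantity $\bar\phi(x,r)=\inf_{\theta\in\mathbb R}\fint_{\Omega_r(x)}\bigl(|\partial_n u-\theta|+|\nabla_{x'}u|\bigr)$, reflecting that the tangential gradient vanishes on the flat boundary after straightening; the decay lemma for the frozen problem (Lemma~\ref{boundary v-BMO}) is stated in this form. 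Second, and more substantively, your proposal works ``on $\Omega_r(x)$'' for an interior point $x$ whose ball may hit $\partial\Omega$, but the oscillation decay and reflection arguments only apply to genuine half-balls centred on $\partial\Omega$ or to full interior balls. The paper handles this via an explicit splicing argument (Lemma~\ref{lemma global sum phi+psi}): for scales $r_j\gtrsim\mathrm{dist}(x_0,\partial\Omega)$ one shifts to the nearest boundary point $y_0$ and uses the boundary chain there, while for $r_j\lesssim\mathrm{dist}(x_0,\partial\Omega)$ one switches to the interior chain; the two pieces are glued at the transition index. Your plan would need this step. Finally, the paper obtains the boundary $L^\infty$ bound for the variable-coefficient solution $w$ by repeating the interior Campanato iteration rather than by reflection; your reflection idea applies to the frozen-coefficient comparison solution (the paper's $v$), which is indeed how one proves the oscillation decay Lemma~\ref{boundary v-BMO}, but the $L^\infty$ estimate for $w$ itself still requires the iteration.
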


As a consequence of Theorem \ref{bou pointwise estiamte}, we obtain the global Lipschitz estimate when $\Omega$ is bounded.
\begin{corollary}\label{Omega bounded}
    Let $\Omega\subset\mathbb R^n$ be a bounded domain, and under the as assumed in Theorem \ref{bou pointwise estiamte}, there exists a constant $C=C(n, p, \lambda, \varrho, R_0, \varrho_0, \text{diam}(\Omega))$ such that for any $x\in\overline{\Omega}$ and $R\in(0,1]$, the following estimates hold:

    (i) when $p\geq 2$, then
    \begin{equation}\label{Linftyp2}
        \|\nabla u\|_{L^{\infty}(\Omega)}\leq C\|{\bf W}_{1/p,p}^1(|\mu|)\|_{L^{\infty}(\Omega)}+Cs; 
    \end{equation}

    (ii) when $1<p<2$, then 
    \begin{equation}\label{Linfty1p2}
        \|\nabla u\|_{L^{\infty}(\Omega)}\leq C\|{\bf I}_1^1(|\mu|)\|_{L^{\infty}(\Omega)}^{\frac{1}{p-1}}+Cs.
    \end{equation}
\end{corollary}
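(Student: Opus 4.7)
The strategy is to apply the boundary pointwise estimates of Theorem~\ref{bou pointwise estiamte} at the maximal admissible scale $R = 1$ and to absorb the integral-average term on the right-hand side using three ingredients: (a) a uniform lower bound $|\Omega_1(x)| \geq c_0 > 0$ coming from the $C^{1,\text{DMO}}$ nature of $\partial\Omega$ (interior cone condition), (b) a Marcinkiewicz-type a priori estimate linking the global integrability of $\nabla u$ to $|\mu|(\Omega)$, and (c) an elementary lower bound of the truncated potentials in terms of $|\mu|(\Omega)$ obtained by integrating over large scales.

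For $p \geq 2$, Theorem~\ref{bou pointwise estiamte}(i) at $R = 1$ gives, for a.e.\ $x \in \overline{\Omega}$,
$$|\nabla u(x)| \leq C{\bf W}_{1/p,p}^1(|\mu|)(x) + C\fint_{\Omega_1(x)}(|\nabla u| + s)\,dy.$$
Boundary regularity forces $|\Omega_1(x)| \geq c_0 > 0$ uniformly in $x \in \overline{\Omega}$, so the average is dominated by $c_0^{-1}(\|\nabla u\|_{L^1(\Omega)} + s|\Omega|)$. The classical Boccardo--Gallou\"{e}t a priori estimate then yields $\|\nabla u\|_{L^1(\Omega)} \leq C(|\mu|(\Omega)^{1/(p-1)} + s)$. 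Finally, integrating the Wolff potential over scales $t$ with $B_t(x_0) \supset \Omega$ (after a preliminary dilation $u(y)\mapsto u(Dy)/D$ when $\text{diam}(\Omega) > 1/2$, which preserves the $p$-Laplace structure with a rescaled measure) produces
$${\bf W}_{1/p,p}^1(|\mu|)(x_0) \geq c\,|\mu|(\Omega)^{1/(p-1)}, \quad x_0 \in \overline{\Omega},$$
with $c = c(n,\text{diam}(\Omega))$. Combining these three bounds yields \eqref{Linftyp2}.

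For $1 < p < 2$, cases (ii) and (iii) of Theorem~\ref{bou pointwise estiamte} feature an average with exponent $\gamma = 2 - p$ or $\gamma = (p-1)^2/2$, both in $(0,1]$. When $p > 2 - 1/n$ one may first apply Jensen's concavity inequality, $(\fint (|\nabla u|+s)^\gamma dy)^{1/\gamma} \leq \fint(|\nabla u|+s)\,dy$, reducing to the same template as in the $p \geq 2$ case, now with the Riesz potential and the analogous lower bound ${\bf I}_1^1(|\mu|)(x_0) \gtrsim |\mu|(\Omega)$. For the singular subrange $1 < p \leq 2 - 1/n$, where the Marcinkiewicz exponent $n(p-1)/(n-1)$ drops below $1$ and $\nabla u \in L^1(\Omega)$ is no longer automatic, one works directly with the $L^\gamma$-average using the SOLA-type estimate $\int_\Omega |\nabla u|^\gamma\,dy \leq C(|\mu|(\Omega)^{\gamma/(p-1)} + s^\gamma)$, valid for $\gamma < n(p-1)/(n-1)$; combined with $|\Omega_1(x)| \geq c_0$ this controls $(\fint|\nabla u|^\gamma\,dy)^{1/\gamma}$ by $C(|\mu|(\Omega)^{1/(p-1)} + s)$, and the argument closes as before to give \eqref{Linfty1p2}.

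The main technical obstacle lies in the singular subrange $1 < p \leq 2 - 1/n$, where one cannot argue via the classical $L^1$-Marcinkiewicz estimate and must instead carry out the absorption at the level of the $L^\gamma$-average provided intrinsically by the pointwise estimate, invoking the SOLA framework. The rescaling argument needed when $\text{diam}(\Omega) > 1$ is a standard dilation whose constants are absorbed into $C = C(n,p,\lambda,\varrho,R_0,\varrho_0,\text{diam}(\Omega))$.
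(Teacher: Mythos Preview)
Your approach differs substantially from the paper's and contains a genuine (though easily repaired) gap in ingredient~(c). The paper does not use Boccardo--Gallou\"{e}t estimates at all. Instead it tests \eqref{bou u=0} with $u$ itself to obtain the energy bound $\|\nabla u\|_{L^p(\Omega)}\le C\|\mu\|_{W^{-1,p/(p-1)}}^{1/(p-1)}$, invokes the Hedberg--Wolff dual inequality to control the negative Sobolev norm by $C\big(\int{\bf W}_{1/p,p}^1(|\mu|)\,d|\mu|\big)^{(p-1)/p}$, and after Young's inequality arrives at $\|\nabla u\|_{L^\infty}\le C\|{\bf W}_{1/p,p}^1(|\mu|)\|_{L^\infty}+Cs+C$. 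The stray additive constant is then removed by a homogeneity trick: apply the same inequality to $v=\lambda^{-1}u$, which solves the equation with data $\lambda^{1-p}\mu$ and parameter $\lambda^{-1}s$, multiply back by $\lambda$, and choose $\lambda=\|{\bf W}_{1/p,p}^1(|\mu|)\|_{L^\infty}$. The range $1<p<2$ is handled by the same argument combined with \eqref{ine-1p2}, so no separate treatment of the singular subrange is needed.

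Your step~(c) fails as written. The dilation $y\mapsto Dy$ that shrinks $\Omega$ turns ${\bf W}_{1/p,p}^1$ of the rescaled measure into ${\bf W}_{1/p,p}^D(|\mu|)$ in the original variables, with $D=2\,\mathrm{diam}(\Omega)$, not into ${\bf W}_{1/p,p}^1(|\mu|)$; so after undoing the dilation you have proved \eqref{Linftyp2} with the wrong truncation radius. And the pointwise lower bound ${\bf W}_{1/p,p}^1(|\mu|)(x_0)\ge c\,|\mu|(\Omega)^{1/(p-1)}$ for every $x_0\in\overline\Omega$ is simply false when $\mathrm{diam}(\Omega)>1$: take $\mu$ a Dirac mass at distance greater than $1$ from $x_0$, and the left side vanishes. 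What is true, and suffices for your argument, is the $L^\infty$ version $\|{\bf W}_{1/p,p}^1(|\mu|)\|_{L^\infty(\Omega)}\ge c(n,\mathrm{diam}\Omega)\,|\mu|(\Omega)^{1/(p-1)}$, obtained by covering $\Omega$ with $N=N(n,\mathrm{diam}\Omega)$ balls of radius $1/2$ and picking the one carrying the largest mass. With this repair your route closes as well; it trades the Hedberg--Wolff capacity characterization and the scaling trick for the SOLA/Marcinkiewicz machinery, which is heavier but more direct and avoids the negative-Sobolev detour.
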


The corresponding global gradient modulus of continuity estimates similar to Theorem \ref{thm continuity} are presented in Theorem \ref{thm contonuity in+bou}. From Theorem \ref{thm contonuity in+bou}, we can obtain the boundary gradient continuity that parallel Corollaries \ref{coro Lorentz}--\ref{coro C^1,n}. We shall not present the details here. 

The remainder of the paper is organized as follows. In Section \ref{sec-interior}, we provide the proof of Theorem \ref{main result Pwg}. To this end,  we begin by presenting a collection of auxiliary estimates in Subsection \ref{sub-auxiliary-est}. It is important to note that, in contrast to the established results in \cite[(4.35)]{dm2010} and \cite[Lemma 3.4]{dm2011},  the weaker conditions on the coefficients stipulated in Assumption \ref{main-assump} necessitate a different approach. Specifically, the right-hand side of the comparison estimate in Lemma \ref{lem-Dv-Dw} is  $\|\nabla w\|_{L^\infty}$ rather than $\|\nabla w\|_{L^p}$. This distinction requires us to prove an a priori estimate of $\|\nabla w\|_{L^\infty}$, as demonstrated in Lemma \ref{lem nabla w infty}. With these key preliminaries, we borrow an idea in \cite{dz2024} to complete the proof of Theorem \ref{main result Pwg} in Subsection \ref{sec-prf-interior}. The proof of the modulus of continuity estimates for the gradient, as stated in Theorem \ref{main-result-cont}, is presented in Section  \ref{sec-modulus}. Section \ref{sec-boundary} is dedicated to the proof of the boundary pointwise estimates of the gradient in Theorem \ref{bou pointwise estiamte} and Corollary \ref{cor bou}, where we also establish the global gradient modulus of continuity estimates as outlined in Theorem \ref{thm contonuity in+bou}.

\section{Interior pointwise estimates of the gradient}\label{sec-interior}
In this section, we first prove several auxiliary estimates and then give the proof of Theorem \ref{main result Pwg}. 

\subsection{Main auxiliary estimates}\label{sub-auxiliary-est}

For any fixed $x_0\in\Omega$, take $B_{2r}(x_0)\subset\subset\Omega$. Let $u\in W_{\text{loc}}^{1,p}(\Omega)$ be a solution to \eqref{p-laplace}. Assume $w\in u+W^{1,p}_0(B_{2r}(x_0))$  is the unique solution to 
\begin{equation}\label{Equ w=u}
    \begin{cases}
        -\Div(a(x)(|\nabla w|^2+s^2)^{\frac{p-2}{2}}\nabla w)=0 &\quad \mbox{in} \quad B_{2r}(x_0),\\
       w=u &\quad \mbox{on} \quad \partial B_{2r}(x_0).
    \end{cases}
\end{equation}

We shall derive the following estimate.

\begin{lemma}\label{lem-Dv-Dw}
Let $v\in w+W^{1,p}_0(B_r(x_0))$ be the unique solution to the problem
\begin{equation}\label{Equ v=w}
    \begin{cases}
        -\Div((a)_{B_r(x_0)}(|\nabla v|^2+s^2)^{\frac{p-2}{2}}\nabla v)=0 &\quad \mbox{in} \quad B_r(x_0),\\
        v=w&\quad \mbox{on} \quad \partial B_r(x_0),
    \end{cases}
\end{equation}
where $w$ satisfies \eqref{Equ w=u}. Then we have 
\begin{equation}\label{Equ Dv-Dw}
    \fint_{B_r(x_0)}|\nabla v-\nabla w|\ dx\leq C(\omega(r))^{\frac{2}{p}}\big(\|\nabla w\|_{L^{\infty}(B_r(x_0))}+s\big)\quad\mbox{if}\quad p\geq 2,
\end{equation}
where $C$ is a constant that depends only on $n$, $p$, and $\lambda$; and
\begin{equation}\label{Equ Dv-Dw-2}
    \fint_{B_r(x_0)}|\nabla v-\nabla w|^{\gamma_0}\ dx\leq C(\omega(r))^{\gamma_0}\big(\|\nabla w\|_{L^{\infty}(B_r(x_0))}+s\big)^{\gamma_0}\quad\mbox{if}\quad 1<p<2,\quad 0<\gamma_0<1,
\end{equation}
where $C$ is a constant that depends on $n$, $p$, $\lambda$, and $\gamma_0$.
\end{lemma}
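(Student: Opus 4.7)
The natural starting point is to use $\varphi:=v-w\in W_0^{1,p}(B_r(x_0))$ as test function in the weak formulations of \eqref{Equ w=u} and \eqref{Equ v=w}. Setting $A_r(\xi):=(a)_{B_r(x_0)}(|\xi|^2+s^2)^{(p-2)/2}\xi$ and subtracting gives the master identity
\begin{equation*}
\int_{B_r(x_0)}\langle A_r(\nabla v)-A_r(\nabla w),\nabla(v-w)\rangle\,dx=\int_{B_r(x_0)}(a(x)-(a)_{B_r(x_0)})(|\nabla w|^2+s^2)^{(p-2)/2}\langle\nabla w,\nabla(v-w)\rangle\,dx,
\end{equation*}
whose right-hand side is pointwise controlled by $|a(x)-(a)_{B_r(x_0)}|\,(|\nabla w|+s)^{p-1}\,|\nabla(v-w)|$. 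Throughout, set $M:=\|\nabla w\|_{L^\infty(B_r(x_0))}+s$.

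For $p\geq 2$, classical monotonicity bounds the LHS below by $c\int(|\nabla v|+|\nabla w|+s)^{p-2}|\nabla v-\nabla w|^2\,dx$. I split the RHS as $|a-(a)_{B_r(x_0)}|(|\nabla w|+s)^{p/2}\cdot(|\nabla w|+s)^{(p-2)/2}|\nabla(v-w)|$ and apply Cauchy--Schwarz, using $(|\nabla w|+s)^{p-2}\leq(|\nabla v|+|\nabla w|+s)^{p-2}$ (valid for $p\geq 2$) to absorb the second factor back into the LHS. This produces the intermediate estimate $\int(|\nabla v|+|\nabla w|+s)^{p-2}|\nabla v-\nabla w|^2\,dx\leq CM^p|B_r(x_0)|\omega(r)^2$. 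The pointwise inequality $|\nabla v-\nabla w|^p\leq(|\nabla v|+|\nabla w|+s)^{p-2}|\nabla v-\nabla w|^2$ (available for $p\geq 2$) and Jensen's inequality then conclude \eqref{Equ Dv-Dw} with the $\omega(r)^{2/p}$ exponent.

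For $1<p<2$, I would first establish the energy bound $\int_{B_r(x_0)}|\nabla v|^p\,dx\leq CM^p|B_r(x_0)|$ by testing $v$'s equation with $v-w$ and applying Young's inequality together with $\|\nabla w\|_{L^\infty}\leq M$. The monotonicity for $A_r$ in this range together with Cauchy--Schwarz on the RHS then give
\begin{equation*}
\int_{B_r(x_0)}(|\nabla v|+|\nabla w|+s)^{p-2}|\nabla v-\nabla w|^2\,dx\leq CM^{2p-2}\int_{B_r(x_0)}|a-(a)_{B_r(x_0)}|^2(|\nabla v|+|\nabla w|+s)^{2-p}\,dx.
\end{equation*}
Next I would apply H\"older with conjugate exponents $(2/\gamma_0,2/(2-\gamma_0))$, which converts $|\nabla v-\nabla w|^{\gamma_0}$ into the product of $[(|\nabla v|+|\nabla w|+s)^{(p-2)/2}|\nabla v-\nabla w|]^{\gamma_0}$ and $(|\nabla v|+|\nabla w|+s)^{\gamma_0(2-p)/2}$; the tail exponent $\gamma_0(2-p)/(2-\gamma_0)$ stays strictly below $p$ because $\gamma_0<1<p$, so the $L^p$ bound on $\nabla v$ controls that factor.

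The main obstacle is that a naive execution of the $1<p<2$ case only yields the weaker estimate $\omega(r)^{\gamma_0(2p-2)/p}$, because the weight $(|\nabla v|+|\nabla w|+s)^{2-p}$ in the displayed intermediate bound cannot be estimated by $M^{2-p}$ from the mere $L^p$ control of $\nabla v$. I expect the author finesses this by invoking Lieberman-type boundary $C^{1,\alpha}$ regularity for the constant-coefficient $p$-Laplace problem \eqref{Equ v=w}, whose boundary datum $v=w$ inherits $C^{1,\alpha}$ regularity from Lemma \ref{lem nabla w infty}. This upgrades the $L^p$ bound to $\|\nabla v\|_{L^\infty(B_r(x_0))}\leq CM$, after which $(|\nabla v|+|\nabla w|+s)^{2-p}\leq CM^{2-p}$ pointwise, the RHS above collapses to $CM^p|B_r(x_0)|\omega(r)^2$, and collecting powers through the H\"older step produces the sharp $CM^{\gamma_0}\omega(r)^{\gamma_0}$ bound.
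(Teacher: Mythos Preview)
Your treatment of the case $p\ge 2$ is correct and coincides with the paper's: the same Cauchy--Schwarz/Young absorption yields the clean intermediate bound
\[
\int_{B_r(x_0)}(|\nabla v|^2+|\nabla w|^2+s^2)^{\frac{p-2}{2}}|\nabla v-\nabla w|^2\,dx\le C\,\omega(r)^2\,M^p\,|B_r(x_0)|,
\]
and Jensen then gives \eqref{Equ Dv-Dw}.

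For $1<p<2$ the paper does \emph{not} pass to $\|\nabla v\|_{L^\infty}$; it keeps the \emph{same} clean intermediate bound above (this is \eqref{est-Dw-Dv}, equivalently $\fint|V(\nabla v)-V(\nabla w)|^2\le C\omega(r)^2M^p$), obtained by applying Cauchy--Schwarz to separate $|a-(a)_{B_r(x_0)}|$ from the rest and then Young, exactly as in the $p\ge 2$ step. From there the paper writes $|\nabla v-\nabla w|\le C|V(\nabla v)-V(\nabla w)|(|\nabla v|^2+|\nabla w|^2+s^2)^{(2-p)/4}$ and applies H\"older with exponents $(2/p,\,2/(2-p))$ at the $L^p$ level:
\[
\fint_{B_r(x_0)}|\nabla v-\nabla w|^p\le C\Big(\fint|V(\nabla v)-V(\nabla w)|^2\Big)^{p/2}\Big(\fint(|\nabla v|^2+|\nabla w|^2+s^2)^{p/2}\Big)^{(2-p)/2}.
\]
The second factor is controlled using only the standard $L^p$ \emph{energy} bound $\fint_{B_r(x_0)}|\nabla v|^p\le C\fint_{B_r(x_0)}(|\nabla w|+s)^p\le CM^p$ (quoted from \cite[(4.32)]{dm2010}). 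This gives $\fint|\nabla v-\nabla w|^p\le C\omega(r)^pM^p$, and one more H\"older step yields \eqref{Equ Dv-Dw-2}. The obstacle you encountered arises because your Young splitting for $p<2$ leaves the weight $(|\nabla v|+|\nabla w|+s)^{2-p}$ attached to $|a-(a)_{B_r(x_0)}|^2$; the paper's splitting avoids this, and then only $L^p$ information on $\nabla v$ is needed.

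Your conjectured fix---upgrading to $\|\nabla v\|_{L^\infty(B_r(x_0))}\le CM$ via Lieberman-type boundary regularity and Lemma~\ref{lem nabla w infty}---has two problems. First, Lemma~\ref{lem nabla w infty} is proved \emph{using} the present lemma (see its Step~1), so invoking it here is circular; at best one may simply assume $\nabla w\in L^\infty(B_r(x_0))$, since otherwise \eqref{Equ Dv-Dw-2} is vacuous. Second, even granting $\nabla w\in L^\infty(B_r(x_0))$, a global $L^\infty$ bound on $\nabla v$ up to $\partial B_r(x_0)$ would require $C^{1,\alpha}$ boundary data, not merely Lipschitz traces, so the Lieberman-type input you have in mind is not available. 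The paper sidesteps both issues by staying at the $L^p$ energy level throughout.
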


\begin{proof}
By testing \eqref{Equ w=u} and \eqref{Equ v=w} with $v-w$, we obtain
\begin{equation}\label{Dv-Dw}
\int_{B_r(x_0)}(a)_{B_r(x_0)}(|\nabla v|^2+s^2)^{\frac{p-2}{2}}\nabla v\cdot\nabla(v-w)\ dx=\int_{B_r(x_0)}a(x)(|\nabla w|^2+s^2)^{\frac{p-2}{2}}\nabla w\cdot\nabla(v-w)\ dx.
\end{equation}
By \cite[(2.2), (2.3)]{dm2010}, \eqref{Dv-Dw}, Hölder's inequality, and Young's inequality, we have
\begin{align*}
&\int_{B_r(x_0)}(|\nabla v|^2+|\nabla w|^2+s^2)^{\frac{p-2}{2}}|\nabla v-\nabla w|^2\ dx\\
&\leq C\int_{B_r(x_0)}\langle(a)_{B_r(x_0)}(|\nabla v|^2+s^2)^{\frac{p-2}{2}}\nabla v-(a)_{B_r(x_0)}(|\nabla w|^2+s^2)^{\frac{p-2}{2}}\nabla w, \nabla v-\nabla w\rangle \ dx\\
&=C\int_{B_r(x_0)}\langle (a(x)-(a)_{B_r(x_0)})(|\nabla w|^2+s^2)^{\frac{p-2}{2}}\nabla w, \nabla v-\nabla w\rangle \ dx\\
&\leq C\big(\int_{B_r(x_0)}|a(x)-(a)_{B_r(x_0)}|^2\big)^{\frac{1}{2}}\big(\int_{B_r(x_0)}(|\nabla w|^2+s^2)^{p-1}|\nabla v-\nabla w|^2dx\big)^\frac{1}{2}\\
&\leq\frac{1}{2}\int_{B_r(x_0)}(|\nabla w|^2+|\nabla v|^2+s^2)^{\frac{p-2}{2}}|\nabla v-\nabla w|^2\ dx\\
&\quad+C\int_{B_r(x_0)}|a(x)-(a)_{B_r(x_0)}|^2\ dx\big(\| \nabla w\|_{L^{\infty}(B_r(x_0))}+s\big)^p.
\end{align*}
This gives 
\begin{equation}\label{est-Dw-Dv}
\int_{B_r(x_0)}(|\nabla v|^2+|\nabla w|^2+s^2)^{\frac{p-2}{2}}|\nabla v-\nabla w|^2\ dx
\leq C\int_{B_r(x_0)}|a(x)-(a)_{B_r(x_0)}|^2\ dx\big(\| \nabla w\|_{L^{\infty}(B_r(x_0))}+s\big)^p.
\end{equation}

Next we intend to complete the proof in two cases.

{\bf Case 1: $p\geq 2$.} Using \eqref{est-Dw-Dv}, we have
\begin{equation*}
\int_{B_r(x_0)}|\nabla v-\nabla w|^p\ dx
\leq C\int_{B_r(x_0)}|a(x)-(a)_{B_r(x_0)}|^2\ dx\big(\| \nabla w\|_{L^{\infty}(B_r(x_0))}+s\big)^p,
\end{equation*}
which combined with Hölder's inequality, we obtain 
\begin{align*}
\int_{B_r(x_0)}|\nabla v-\nabla w|\ dx&\leq Cr^{n(1-\frac{1}{p})}\big(\int_{B_r(x_0)}|\nabla v-\nabla w|^p\ dx\big)^{\frac{1}{p}}\\
&\leq Cr^{n(1-\frac{1}{p})}\big(\int_{B_r(x_0)}|a(x)-(a)_{B_r(x_0)}|^2\ dx\big)^{\frac{1}{p}}\big(\|\nabla w\|_{L^{\infty}(B_r(x_0))}+s\big).
\end{align*}
This directly yields \eqref{Equ Dv-Dw}.

{\bf Case 2: $1<p<2$.} Using a well-known inequality (see, \cite[(2.2)]{dm2010})
\begin{equation}\label{well known equ}
    c^{-1}(|\xi_2|^2+|\xi_2|^2+s^2)^{\frac{p-2}{2}}\leq \frac{|V(\xi_2)-V(\xi_1)|^2}{|\xi_2-\xi_1|^2}\leq c(|\xi_1|^2+|\xi_2|^2+s^2)^{\frac{p-2}{2}}, \quad c>1,
\end{equation}
where
\begin{equation*}
    V(\xi):=(|\xi|^2+s^2)^{\frac{p-2}{4}}\xi, \quad \xi\in\mathbb R^n,
\end{equation*}
we obtain 
\begin{align*}
|\nabla v-\nabla w|&=\big((|\nabla v|^2+|\nabla w|^2+s^2)^{\frac{p-2}{2}}|\nabla v-\nabla w|^2\big)^{\frac{1}{2}}(|\nabla v|^2+|\nabla w|^2+s^2)^{\frac{2-p}{4}}\\
&\leq C|V(\nabla v)-V(\nabla w)|(|\nabla v|^2+|\nabla w|^2+s^2)^{\frac{2-p}{4}}.
\end{align*}
Using \eqref{well known equ} and \eqref{est-Dw-Dv}, we have 
\begin{align}\label{V(nabla v)-V(nabla w)}
\fint_{B_r(x_0)}|V(\nabla v)-V(\nabla w)|^2\ dx&\leq C\fint_{B_r(x_0)}(|\nabla v|^2+|\nabla w|^2+s^2)^{\frac{p-2}{2}}|\nabla v-\nabla w|^2\ dx\nonumber\\
&\leq C\fint_{B_r(x_0)}|a(x)-(a)_{B_r(x_0)}|^2\ dx\big(\| \nabla w\|_{L^{\infty}(B_r(x_0))}+s\big)^p.
\end{align}
Then by  Hölder's inequality and \eqref{V(nabla v)-V(nabla w)}, we obtain 
\begin{align}\label{est-Dv-Dw-Lp}
&\fint_{B_r(x_0)}|\nabla v-\nabla w|^p\ dx\nonumber\\
&\leq C\fint_{B_r(x_0)}|V(\nabla v)-V(\nabla w)|^p(|\nabla v|^2+|\nabla w|^2+s^2)^{\frac{p(2-p)}{4}}\ dx\nonumber\\
&\leq C\big(\fint_{B_r(x_0)}|V(\nabla v)-V(\nabla w)|^2\ dx\big)^{\frac{p}{2}}\big(\fint_{B_r(x_0)}(|\nabla v|^2+|\nabla w|^2+s^2)^{\frac{p}{2}}\ dx\big)^{\frac{2-p}{2}}\nonumber\\
&\leq C\big(\fint_{B_r(x_0)}|a(x)-(a)_{B_r(x_0)}|^2\ dx\big)^{\frac{p}{2}}\big(\| \nabla w\|_{L^{\infty}(B_r(x_0))}+s\big)^{\frac{p^2}{2}}\nonumber\\
&\quad\cdot\big(\fint_{B_r(x_0)}(|\nabla v|^2+|\nabla w|^2+s^2)^{\frac{p}{2}}\ dx\big)^{\frac{2-p}{2}}.
\end{align}
 Recalling \cite[(4.32)]{dm2010}, we have for $1<p<2$,
\begin{equation*}
\fint_{B_r(x_0)}|\nabla v|^p\ dx\leq C\fint_{B_r(x_0)}(|\nabla w|+s)^p\ dx.
\end{equation*}
Then 
\begin{align*}
\fint_{B_r(x_0)}(|\nabla v|^2+|\nabla w|^2+s^2)^{\frac{p}{2}}\ dx\leq C\fint_{B_r(x_0)}(|\nabla w|+s)^p\ dx\leq C\big(\| \nabla w\|_{L^{\infty}(B_r(x_0))}+s\big)^p.
\end{align*}
This in combination with \eqref{est-Dv-Dw-Lp} yields 
\begin{equation*}
\fint_{B_r(x_0)}|\nabla v-\nabla w|^p\ dx\leq C\big(\fint_{B_r(x_0)}|a(x)-(a)_{B_r(x_0)}|^2\ dx\big)^{\frac{p}{2}}\big(\| \nabla w\|_{L^{\infty}(B_r(x_0))}+s\big)^p.
\end{equation*}
Using the previous inequality and the Hölder's inequality, we obtain \eqref{Equ Dv-Dw-2}. Thus, Lemma \ref{lem-Dv-Dw} is proved.
\end{proof}

Now we provide an oscillation estimate for the solution of the following equation
\begin{equation}\label{v-laplace}
    -\Div((a)_{B_{r}(x_0)}(|\nabla v|^2+s^2)^{\frac{p-2}{2}} \nabla v))=0 \quad \mbox{in}~ \Omega.
\end{equation}

\begin{theorem}\label{thm v-BMO}
Let $ v \in W^{1,p}_{\text{loc}} (\Omega)$ be a solution to \eqref{v-laplace}.  There exists a constant $\alpha \in (0,1)$    depending on $n$, $p$, and $\lambda$, such that for every $B_R(x_0)\subset \Omega$ and $\rho \in (0,R),$ we have the following estimates:

${(i)}$ when $p\geq2$, then
    \begin{equation}\label{Equ v-BMO}
       \inf_{\textbf{q}\in \mathbb R^n} \fint_{B_{\rho}(x_0)}|\nabla v-\textbf{q}|\leq C\big (\frac{\rho}{R}\big)^{\alpha}\inf_{\textbf{q}\in \mathbb R^n}  \fint_{B_{R}(x_{0})}|\nabla v-\textbf{q}|,
    \end{equation}  
    where C is a constant depending on $n$, $p$, and $\lambda$;

${(ii)}$ when $1<p<2$, then  for any $0<\gamma_0<1$, 
    \begin{equation}\label{Equ v-Bmo-gamma}
          \inf_{\textbf{q}\in \mathbb R^n} \big(\fint_{B_{\rho}(x_0)}|\nabla v-\textbf{q}|^{\gamma_0}\big)^{\frac{1}{\gamma_0}}\leq C\big (\frac{\rho}{R}\big)^{\alpha}\inf_{\textbf{q}\in \mathbb R^n}  \big(\fint_{B_{R}(x_{0})}|\nabla v-\textbf{q}|^{\gamma_0}\big)^{\frac{1}{\gamma_0}},
    \end{equation}
where $C$ is a constant depending on $n$, $p$, $\lambda$, and $\gamma_0$.
\end{theorem}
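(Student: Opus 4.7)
The plan is to derive both \eqref{Equ v-BMO} and \eqref{Equ v-Bmo-gamma} from the classical interior $C^{1,\alpha}_{\text{loc}}$ regularity for weak solutions of the homogeneous constant-coefficient $p$-Laplace-type equation \eqref{v-laplace}, due to Uhlenbeck, Tolksdorf, DiBenedetto, Lewis, and Manfredi, combined with Campanato's approach already referenced by the authors in the introduction (cf.\ \cite{g1983,l1987}). This theory supplies an exponent $\alpha = \alpha(n, p, \lambda) \in (0, 1)$ such that $\nabla v$ is locally Hölder continuous of exponent $\alpha$ with a quantitative interior seminorm bound in terms of averages of $|\nabla v| + s$.

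For part (i), the key observation is that, because the leading coefficient $(a)_{B_r(x_0)}$ in \eqref{v-laplace} is constant and the nonlinearity $\xi \mapsto (|\xi|^2 + s^2)^{(p-2)/2}\xi$ is smooth for $|\xi| + s > 0$, each partial derivative $\partial_k v$ satisfies in the weak sense a linear elliptic equation in divergence form with Hölder-continuous coefficients (inherited from the Hölder continuity of $\nabla v$). The classical Campanato excess decay for such a linear equation then gives, for $\rho \leq R/4$,
\[
\fint_{B_\rho(x_0)} |\partial_k v - (\partial_k v)_{B_\rho(x_0)}| \, dx \leq C\left(\frac{\rho}{R}\right)^\alpha \fint_{B_{R/2}(x_0)} |\partial_k v - (\partial_k v)_{B_{R/2}(x_0)}| \, dx.
\]
Summing over $k$ and invoking the elementary equivalence $\inf_{\mathbf{q}} \fint_B |\nabla v - \mathbf{q}| \approx \fint_B |\nabla v - (\nabla v)_B|$ (holding up to a factor of two via the triangle inequality), together with the standard estimate passing from $B_{R/2}(x_0)$ to $B_R(x_0)$, yields \eqref{Equ v-BMO}. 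The intermediate range $R/2 \leq \rho \leq R$ is trivial after enlarging $C$.

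For part (ii), the same outline applies to the singular regime $1 < p < 2$. The $C^{1,\alpha}$ regularity still holds, and the linearized equation for $\partial_k v$ remains uniformly elliptic away from critical points of $v$, so the Campanato decay applies with the $L^{\gamma_0}$ norm in place of $L^1$. Hölder's and Jensen's inequalities reconcile the exponent $\gamma_0 \in (0,1)$ on each side and yield the $L^{\gamma_0}$-form \eqref{Equ v-Bmo-gamma}.

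The main obstacle I anticipate is the degenerate case $s = 0$ near points where $\nabla v$ vanishes: the linearization for $\partial_k v$ there loses uniform ellipticity, and one cannot directly apply the linear Campanato argument. This is bypassed by an Uhlenbeck-type regularization (replacing $s = 0$ by $s = \varepsilon > 0$, deriving the decay uniformly in $\varepsilon$, and passing to the limit) or by the Bernstein-type estimate for $(|\nabla v|^2 + s^2)^{p/2}$. These are the standard technical ingredients already present in the classical $C^{1,\alpha}$ proofs cited above, and the theorem follows once they are invoked.
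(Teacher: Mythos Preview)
The paper itself omits the proof, referring to \cite[Theorem 2.1]{dz2024} (and implicitly to \cite[Theorem 3.1]{dm2011} for $p\geq 2$), where this excess decay is established. Your outline is headed in the right direction, but there is a genuine gap in the linearization step. Differentiating the equation shows that $w_k:=\partial_k v$ solves $-\Div(B(x)\nabla w_k)=0$ with
\[
B(x)=(a)_{B_r(x_0)}\Big[(|\nabla v|^2+s^2)^{\frac{p-2}{2}}I+(p-2)(|\nabla v|^2+s^2)^{\frac{p-4}{2}}\nabla v\otimes\nabla v\Big].
\]
While the \emph{pointwise} ellipticity ratio of $B(x)$ equals $\lambda^2\max\{p-1,(p-1)^{-1}\}$, the De Giorgi oscillation decay and the $L^\infty$--$L^{\gamma_0}$ bound you invoke depend on the \emph{global} ratio $\sup_{B}\Lambda(x)/\inf_{B}\nu(x)$ over the ball, and this is of order
\[
\Big(\frac{\sup_{B}(|\nabla v|^2+s^2)}{\inf_{B}(|\nabla v|^2+s^2)}\Big)^{|p-2|/2},
\]
which is \emph{not} controlled by $n,p,\lambda$ alone. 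Regularizing $s=0$ by $s=\varepsilon>0$ does not repair this: the ratio above still blows up as $\varepsilon\to0$ whenever $\nabla v$ nearly vanishes somewhere, so the constants you extract are not uniform.

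The missing ingredient, and what the cited references actually use, is a two-alternative (degenerate/non-degenerate) argument. Set $E(R)=\inf_{\mathbf q}(\fint_{B_R}|\nabla v-\mathbf q|^{\gamma_0})^{1/\gamma_0}$ and $A(R)=(\fint_{B_R}(|\nabla v|+s)^{\gamma_0})^{1/\gamma_0}$. In the \emph{degenerate} alternative $A(R)\leq M\,E(R)$, the classical DiBenedetto--Manfredi estimate $\mathrm{osc}_{B_{\sigma R}}\nabla v\leq C\sigma^{\alpha}A(R)$ already yields $E(\sigma R)\leq CM\sigma^{\alpha}E(R)$, and one takes $\sigma$ small. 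In the \emph{non-degenerate} alternative $A(R)>M\,E(R)$, one has $|(\nabla v)_{B_R}|+s\approx A(R)$; combining the sup bound with the $C^{0,\alpha}$ estimate, one first shrinks to a ball on which $\sup|\nabla v-(\nabla v)_{B_R}|$ is small compared with $|(\nabla v)_{B_R}|+s$, so that $|\nabla v|+s$ is pinched between two comparable constants. Only on that smaller ball is the linearized equation uniformly elliptic with ratio depending solely on $n,p,\lambda$, and your De Giorgi argument for $\partial_k v$ then goes through to give the excess decay. Iterating the dichotomy produces the stated power-type decay. Your sketch captures the non-degenerate step but omits both the dichotomy and the pinching that makes the ellipticity uniform; without them the argument does not close.
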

\begin{proof}
    The proof of (\ref{Equ v-BMO}) and \eqref{Equ v-Bmo-gamma} follows a similar process as that in \cite[Theorem 2.1]{dz2024}. Thus, we omit the details.
\end{proof}
For any ball $B_r(x)\subset\subset\Omega$ and a function $f\in W_{\text{loc}}^{1,p}(\Omega)$, we define 
\begin{equation*}
\phi_f(x,r)=\inf_{{\bf q}\in\mathbb R^n}\fint_{B_r(x)}|\nabla f-{\bf q}| \quad \text{and} \quad \psi_f(x,r)=\inf_{{\bf q}\in\mathbb R^n}\big(\fint_{B_r(x)}|\nabla f-{\bf q}|^{\gamma_0}\big)^{1/\gamma_0},
\end{equation*}
where $\gamma_0$ $\in$ (0,1).
In the sequel, we shall use $\phi_{\bullet}(x,r)$ and $\psi_{\bullet}(x,r)$ to differentiate between various functions.

\begin{prop}\label{prop-phi-Dw}
Let $w$ be the unique solution to \eqref{Equ w=u} and $\alpha\in(0,1)$ is the constant in Theorem \ref{thm v-BMO}. Then for any $\varepsilon\in(0,1)$, we have 
\begin{align}\label{phi w}
\phi_w(x_0,\varepsilon r)\leq C\varepsilon^{\alpha}\phi_w(x_0,r)+C\varepsilon^{-n}(\omega(r))^{\frac{2}{p}}\big(\|\nabla w\|_{L^{\infty}(B_r(x_0))}+s\big)\quad\mbox{if}\quad p\geq 2,
\end{align}
  where $C$ is a constant depending on $n$, $p$, and $\lambda$; and
\begin{align}\label{psi w}
\psi_w(x_0,\varepsilon r)\leq C\varepsilon^{\alpha}\psi_w(x_0,r)+C\varepsilon^{-n/\gamma_0}\omega(r)\big(\|\nabla w\|_{L^{\infty}(B_r(x_0))}+s\big)\quad\mbox{if}\quad 1<p<2,
\end{align}
for any $0<\gamma_0<1$, where  $C$ is a constant depending on $n$, $p$, $\lambda$, and $\gamma_0$. 
\end{prop}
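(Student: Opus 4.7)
The plan is a standard comparison-decay argument. I would fix the ball $B_r(x_0)$, let $v$ solve the frozen-coefficient problem \eqref{Equ v=w} with $v = w$ on $\partial B_r(x_0)$, and use $v$ as a pivot between $\phi_w$ at scale $\varepsilon r$ and at scale $r$. The point is that $v$ enjoys the decay estimate of Theorem \ref{thm v-BMO}, while the comparison error $\nabla w - \nabla v$ is already controlled by Lemma \ref{lem-Dv-Dw}.

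For the case $p \geq 2$, the triangle inequality gives, for every $\mathbf{q} \in \mathbb{R}^n$ and every $\rho \in (0, r]$,
\begin{align*}
\fint_{B_\rho(x_0)} |\nabla w - \mathbf{q}|\, dx \leq \fint_{B_\rho(x_0)} |\nabla w - \nabla v|\, dx + \fint_{B_\rho(x_0)} |\nabla v - \mathbf{q}|\, dx.
\end{align*}
Taking the infimum in $\mathbf{q}$ on both sides at the scale $\rho = \varepsilon r$ and using $\fint_{B_{\varepsilon r}} \leq \varepsilon^{-n}\fint_{B_r}$, I would obtain
\begin{align*}
\phi_w(x_0, \varepsilon r) \leq \varepsilon^{-n}\fint_{B_r(x_0)} |\nabla w - \nabla v|\, dx + \phi_v(x_0, \varepsilon r).
\end{align*}
I would then apply \eqref{Equ v-BMO} to bound $\phi_v(x_0, \varepsilon r) \leq C\varepsilon^{\alpha} \phi_v(x_0, r)$, invoke the triangle inequality once more symmetrically to get $\phi_v(x_0, r) \leq \phi_w(x_0, r) + \fint_{B_r(x_0)} |\nabla w - \nabla v|\, dx$, and finally substitute the bound $\fint_{B_r(x_0)} |\nabla w - \nabla v|\, dx \leq C(\omega(r))^{2/p}\big(\|\nabla w\|_{L^{\infty}(B_r(x_0))} + s\big)$ from \eqref{Equ Dv-Dw}. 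Since $\varepsilon \in (0,1)$, the factor $\varepsilon^{\alpha}$ is dominated by $\varepsilon^{-n}$, so the two comparison-error contributions merge into the single term on the right of \eqref{phi w}.

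For the range $1 < p < 2$ the same three-step scheme (triangle inequality, decay for $v$, triangle inequality again, then \eqref{Equ Dv-Dw-2}) goes through verbatim with $L^{\gamma_0}$ averages replacing $L^1$ averages, now using \eqref{Equ v-Bmo-gamma} of Theorem \ref{thm v-BMO}. Two cosmetic adjustments are needed: since $\gamma_0 \in (0,1)$, the $L^{\gamma_0}$ quasi-norm satisfies the triangle inequality only up to a multiplicative constant depending on $\gamma_0$ (harmlessly absorbed into $C$), and the volume rescaling from $B_r$ to $B_{\varepsilon r}$ at exponent $\gamma_0$ now produces the factor $\varepsilon^{-n/\gamma_0}$, which is exactly the exponent appearing in \eqref{psi w}.

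I do not anticipate any essential obstacle: the heavy analytic lifting has already been carried out in Lemma \ref{lem-Dv-Dw} (the comparison estimate) and Theorem \ref{thm v-BMO} (the excess-decay for the frozen problem), and this proposition is essentially their assembly in the form required for the Campanato-type iteration that drives the proofs of Theorem \ref{main result Pwg} and Theorem \ref{main-result-cont}. The only minor bookkeeping points are the correct order of the two $\mathbf{q}$-infima (trivial, since $\fint |\nabla w - \nabla v|$ does not depend on $\mathbf{q}$) and the absorption of the $\gamma_0$-dependent quasi-norm constants in the singular case.
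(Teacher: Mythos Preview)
Your proposal is correct and follows essentially the same approach as the paper: introduce the frozen-coefficient comparison function $v$ from \eqref{Equ v=w}, apply the triangle inequality on both sides of the decay estimate of Theorem \ref{thm v-BMO}, and absorb the comparison error via Lemma \ref{lem-Dv-Dw}. The paper writes the argument as a single chain of inequalities using explicit minimizers ${\bf q}_{x,r}(f)$ rather than working with the infima directly, but the logic and the dependence on $\varepsilon$ are identical to what you describe.
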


\begin{proof}
For a ball $B_r(x)\subset \Omega$ and a function $f\in W^{1,p}_{\text{loc}}(\Omega)$, there exist ${\bf q}_{x,r}(f)$ and ${\bf q}_{x,r;\gamma_0}(f)\in\mathbb R^n$ such that
\begin{equation}\label{def-q}
\fint_{B_r(x)}|\nabla f-{\bf q}_{x,r}(f)|=\inf_{{\bf q}\in\mathbb R^n}\fint_{B_r(x)}|\nabla f-{\bf q}| ,
\end{equation}
and 
\begin{equation}\label{def q gamma}
    \big(\fint_{B_r(x)}|\nabla f-{\bf q}_{x,r;\gamma_0}(f)|^{\gamma_0}\big)^{1/\gamma_0}=\inf_{{\bf q}\in \mathbb R^n}\big(\fint_{B_r(x)}|\nabla f-{\bf q}|^{\gamma_0}\big)^{1/\gamma_0}.
\end{equation}
When $p\geq 2$, by the definition of ${\bf q}_{x,r}(\cdot)$ in \eqref{def-q} and \eqref{Equ v-BMO}, and the triangle inequality, we obtain
\begin{align}\label{fint-Dw}
\fint_{B_{\varepsilon r}(x_0)}|\nabla w-{\bf q}_{x_0,\varepsilon r}(w)|&\leq \fint_{B_{\varepsilon r}(x_0)}|\nabla w-{\bf q}_{x_0,\varepsilon r}(v)|\nonumber\\
&\leq C\fint_{B_{\varepsilon r}(x_0)}|\nabla v-{\bf q}_{x_0,\varepsilon r}(v)|+C\fint_{B_{\varepsilon r}(x_0)}|\nabla v-\nabla w|\nonumber\\
&\leq C\varepsilon^{\alpha}\fint_{B_{r}(x_0)}|\nabla v-{\bf q}_{x_0,r}(v)|+C\varepsilon^{-n}\fint_{B_{r}(x_0)}|\nabla v-\nabla w|\nonumber\\
&\leq C\varepsilon^{\alpha}\fint_{B_r(x_0)}|\nabla v-{\bf q}_{x_0,r}(w)|+C\varepsilon^{-n}\fint_{B_r(x_0)}|\nabla v-\nabla w|\nonumber\\
&\leq C\varepsilon^{\alpha}\fint_{B_{r}(x_0)}|\nabla w-{\bf q}_{x_0,r}(w)|+C\varepsilon^{-n}\fint_{B_{r}(x_0)}|\nabla v-\nabla w|.
\end{align}
This together with \eqref{Equ Dv-Dw} implies \eqref{phi w}. Similarly, when $1<p<2$, by \eqref{Equ v-Bmo-gamma}, \eqref{def q gamma}, the triangle inequality, and \eqref{Equ Dv-Dw-2}, we derive \eqref{psi w}. Proposition \ref{prop-phi-Dw} is proved.
\end{proof}

Let $R\in(0,1]$ and $B_R(x_0)\subset\Omega$. Let $\alpha_1\in(0,\alpha)$ and choose $\varepsilon=\varepsilon(n,p,\lambda,\alpha,\alpha_1)>0$ small enough such that 
\begin{equation}\label{alpha1}
C\varepsilon^{\alpha-\alpha_1}<1\quad\mbox{and}\quad \varepsilon^{\alpha_1}<1/4.   
\end{equation}

\begin{lemma}\label{lem-phi-w}
Let $w$ be the unique solution to \eqref{Equ w=u} and let $B_{2r}(x)\subset\subset B_{R}(x_0)\subset\Omega$ with $r\leq R/4$. Then for any $\rho\in(0,r]$, the following assertions hold:

(i) when $p\geq 2$, 
\begin{align}\label{iteration}
\phi_w(x,\rho)\leq C\big(\frac{\rho}{r}\big)^{\alpha_1}\phi_w(x,r)+C\tilde\omega(\rho)\big(\|\nabla w\|_{L^{\infty}(B_{r}(x))}+s\big),
\end{align}
and 
\begin{align}\label{iteration-sum}
\sum_{j=0}^{\infty}\phi_w(x,\varepsilon^j\rho)\leq C\big(\frac{\rho}{r}\big)^{\alpha_1}\phi_w(x,r)+C\big(\|\nabla w\|_{L^{\infty}(B_{r}(x))}+s\big)\int_{0}^{\rho}\frac{\tilde\omega(t)}{t}\ dt,
\end{align}
where $C$ is a constant depending on $n$, $p$, and $\lambda$, and $\tilde\omega(\cdot)$ is defined in \eqref{def-tilde-omega};

(ii) when $1<p<2$, for any $0<\gamma_0<1$, 
\begin{equation}\label{psi w iteration}
    \psi_w(x,\rho)\leq C\big(\frac{\rho}{r}\big)^{\alpha_1}\psi_w(x,r)+C\overline{w}(\rho)\big(\|\nabla w\|_{L^{\infty}(B_r(x))}+s\big),
\end{equation}
and
\begin{equation}\label{psi w iteration sum}
    \sum_{j=0}^{\infty}\psi_w(x,\varepsilon^j\rho)\leq C\big(\frac{\rho}{r}\big)^{\alpha_1}\psi_w(x,r)+C\big(\|\nabla w\|_{L^{\infty}(B_r(x))}+s\big)\int_0^{\rho}\frac{\overline{w}(t)}{t}\ dt,
\end{equation}
where $C$ is a constant depending on $n$, $p$, $\lambda$, and $\gamma_0$, and $\overline{\omega}(\cdot)$ is defined in \eqref{def overline omega}.
\end{lemma}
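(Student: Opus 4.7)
The plan is to iterate the one-step decay estimate of Proposition \ref{prop-phi-Dw} on the dyadic scales $\varepsilon^j r$ and then pass from these discrete scales to arbitrary $\rho\in(0,r]$. I describe case (i) in detail; case (ii) runs identically after replacing $\phi_w$ by $\psi_w$, $(\omega(\cdot))^{2/p}$ by $\omega(\cdot)$, and $\tilde\omega$ by $\overline{\omega}$.

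Fix $x$ with $B_{2r}(x)\subset B_R(x_0)$, abbreviate $a_j=\phi_w(x,\varepsilon^j r)$, and set $M=\|\nabla w\|_{L^{\infty}(B_r(x))}+s$. Applying \eqref{phi w} at radius $\varepsilon^j r$ and using the inclusion $B_{\varepsilon^j r}(x)\subset B_r(x)$ to bound $\|\nabla w\|_{L^{\infty}(B_{\varepsilon^j r}(x))}\le M$ gives the linear recursion
\[
a_{j+1}\le C\varepsilon^{\alpha}\,a_j+C\varepsilon^{-n}(\omega(\varepsilon^j r))^{2/p}M.
\]
The choice \eqref{alpha1} of $\varepsilon$ allows one to replace every factor $C\varepsilon^{\alpha}$ by $\varepsilon^{\alpha_1}$, and a one-line induction then produces
\[
a_j\le \varepsilon^{\alpha_1 j}\,a_0+CM\sum_{k=0}^{j-1}\varepsilon^{\alpha_1(j-1-k)}(\omega(\varepsilon^k r))^{2/p}.
\]

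For an arbitrary $\rho\in(0,r]$, I would choose the unique integer $j\ge 0$ with $\varepsilon^{j+1}r<\rho\le \varepsilon^j r$. The inclusion $B_\rho(x)\subset B_{\varepsilon^j r}(x)$ with ratio of radii at most $\varepsilon^{-1}$ immediately yields $\phi_w(x,\rho)\le \varepsilon^{-n}a_j$; combined with $\varepsilon^{\alpha_1 j}\le C(\rho/r)^{\alpha_1}$ and the identification of the weighted discrete sum $\sum_{k=0}^{j-1}\varepsilon^{\alpha_1(j-1-k)}(\omega(\varepsilon^k r))^{2/p}$ with (a constant multiple of) $\tilde\omega(\rho)$ from \eqref{def-tilde-omega}, this delivers \eqref{iteration}. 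For the summed bound \eqref{iteration-sum}, I apply \eqref{iteration} at the radii $\varepsilon^j\rho$, $j\ge 0$, and add: the first contribution is $\phi_w(x,r)(\rho/r)^{\alpha_1}\sum_{j\ge 0}\varepsilon^{\alpha_1 j}$, a convergent geometric series by \eqref{alpha1}, and the second is $CM\sum_{j\ge 0}\tilde\omega(\varepsilon^j\rho)$, which a standard dyadic comparison converts into $CM\int_0^\rho \tilde\omega(t)/t\,dt$.

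The main obstacle is not the iteration itself, which is a textbook Campanato-style argument, but the two comparisons: the discrete weighted sum with $\tilde\omega(\rho)$, and $\sum_j\tilde\omega(\varepsilon^j\rho)$ with the Dini integral $\int_0^\rho \tilde\omega(t)/t\,dt$. Both rest on monotonicity/almost-doubling properties of $\omega$ together with the Dini hypothesis \eqref{Dini-1} (respectively \eqref{Dini-2}), and are essentially built into the definitions of $\tilde\omega$ and $\overline{\omega}$. Once these are in place, what remains is bookkeeping of constants depending on $n,p,\lambda$ (and additionally $\gamma_0$ in case (ii)).
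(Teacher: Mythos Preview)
Your proposal is correct and follows essentially the same route as the paper: iterate the one-step estimate \eqref{phi w} (resp.\ \eqref{psi w}) along the scales $\varepsilon^j r$, pass to arbitrary $\rho$, and then sum. The one place where the paper's argument is slightly cleaner is the passage from discrete scales to continuous $\rho$: rather than bounding $\phi_w(x,\rho)\le\varepsilon^{-n}\phi_w(x,\varepsilon^j r)$ and then arguing that the discrete sum $\sum_{k}\varepsilon^{\alpha_1(j-1-k)}(\omega(\varepsilon^k r))^{2/p}$ is comparable to $\tilde\omega(\rho)$ via almost-monotonicity of $\omega$, the paper reruns the iteration \eqref{phi-j} with $\varepsilon^{-j}\rho$ in place of $r$, so that after $j$ steps one lands exactly at $\rho$ and the right-hand side is literally $\tilde\omega(\rho)$ by definition; one then only needs $\phi_w(x,\varepsilon^{-j}\rho)\le C\phi_w(x,r)$, which is the trivial volume-ratio bound. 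This avoids having to verify any doubling property of $\omega$. For the summed estimate the paper proceeds exactly as you do, citing \cite[Lemma~2.7]{dk2017} for the comparison $\sum_{j\ge0}\tilde\omega(\varepsilon^j\rho)\le C\int_0^\rho\tilde\omega(t)\,dt/t$.
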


\begin{proof}
{\bf Case 1: $p\geq 2$.}
 For any $B_{2r}(x)\subset\subset B_{R}(x_0)$, by \eqref{phi w} and \eqref{alpha1}, we have
\begin{align*}
\phi_w(x,\varepsilon r)\leq \varepsilon^{\alpha_1}\phi_w(x,r)+C(\omega(r))^{\frac{2}{p}}\big(\|\nabla w\|_{L^{\infty}(B_r(x))}+s\big).
\end{align*}
Then by iteration, for any $B_{2r}(x)\subset\subset B_{R}(x_0)$ with $r\in(0,R/4)$, we get
\begin{align}\label{phi-j}
\phi_w(x,\varepsilon^j r)&\leq \varepsilon^{\alpha_1 j}\phi_w(x,r)+C\sum_{i=1}^{j}\varepsilon^{\alpha_1(i-1)}(\omega(\varepsilon^{j-i}r))^{\frac{2}{p}}\big(\|\nabla w\|_{L^{\infty}(B_{r}(x))}+s\big)\nonumber\\
&\leq \varepsilon^{\alpha_1 j}\phi_w(x,r)+C\tilde\omega(\varepsilon^{j}r)\big(\|\nabla w\|_{L^{\infty}(B_{r}(x))}+s\big),
\end{align}
where
\begin{equation}\label{def-tilde-omega}
\tilde\omega(t):=\sum_{i=0}^{\infty}\varepsilon^{\alpha_1 i}\big((\omega(\varepsilon^{-i}t))^{\frac{2}{p}}[\varepsilon^{-i}t\leq R/2]+(\omega(R/2))^{\frac{2}{p}}[\varepsilon^{-i}t>R/2]\big).
\end{equation}
This employs the Iverson bracket notation, that is, $[P]=1$ if $P$ is true and $[P]=0$ otherwise. Similar to the proof of \cite[Lemma 1]{d2012}, and using (\ref{Dini-1}), we get 
\begin{equation}\label{Dini-cond}
\int_0^1\frac{\tilde\omega(r)}{r}\ dr<+\infty.
\end{equation}
For any given $\rho\in(0,r]$, let $j$ be a nonnegative integer such that $\varepsilon^{j+1}<\frac{\rho}{r}\leq\varepsilon^j$. By \eqref{phi-j} with $\varepsilon^{-j}\rho$ in place of $r$, we obtain
\begin{align*}
\phi_w(x,\rho)&\leq\varepsilon^{\alpha_1 j}\phi_w(x,\varepsilon^{-j}\rho)+C\tilde\omega(\rho)\big(\|\nabla w\|_{L^{\infty}(B_{\varepsilon^{-j}\rho}(x))}+s\big)\nonumber\\
&\leq C\big(\frac{\rho}{r}\big)^{\alpha_1}\phi_w(x,r)+C\tilde\omega(\rho)\big(\|\nabla w\|_{L^{\infty}(B_{r}(x))}+s\big).
\end{align*}
This gives \eqref{iteration}. Replacing $\rho$ with $\varepsilon^{j}\rho$ in \eqref{iteration} and summing in $j$, we have
\begin{align*}
\sum_{j=0}^{\infty}\phi_w(x,\varepsilon^j\rho)\leq C\big(\frac{\rho}{r}\big)^{\alpha_1}\phi_w(x,r)+C\sum_{j=1}^{\infty}\tilde\omega(\varepsilon^j\rho)\big(\|\nabla w\|_{L^{\infty}(B_{r}(x))}+s\big).
\end{align*}
By \cite[Lemma 2.7]{dk2017}, we derive \eqref{iteration-sum}.

{\bf Case 2: $1<p<2$.} From \eqref{psi w} and \eqref{alpha1}, for any $B_{2r}(x)\subset\subset B_R(x_0)$, we have 
\begin{equation*}
    \psi_w(x,\varepsilon r)\leq \varepsilon^{\alpha_1}\psi_w(x,r)+C\omega(r)\big(\|\nabla w\|_{L^{\infty}(B_r(x))}+s\big).
\end{equation*}
By iteration, for any $B_{2r}(x)\subset\subset B_R(x_0)$ with $r\in (0,R/4)$, we obtain
\begin{align}\label{psi w var j}
    \psi_w(x,\varepsilon^j r)&\leq \varepsilon^{\alpha_1j}\psi_w(x,r)+C\sum_{i=1}^j\varepsilon^{\alpha_1(i-1)}\omega(\varepsilon^{j-i}r)\big(\|\nabla w\|_{L^{\infty}(B_r(x))}+s\big)\nonumber\\
    &\leq \varepsilon^{\alpha_1j}\psi_w(x,r)+C\overline{\omega}(\varepsilon^jr)\big(\|\nabla w\|_{L^{\infty}(B_r(x))}+s\big),
\end{align}
where
\begin{equation}\label{def overline omega}
    \overline{\omega}(t):=\sum_{i=0}^{\infty}\varepsilon^{\alpha_1i}\big(\omega(\varepsilon^{-i}t)[\varepsilon^{-i}t\leq R/2]+\omega(R/2)[\varepsilon^{-i}t>R/2]\big).
\end{equation}
Once again, the Iverson bracket notation is utilized here. We also have
\begin{equation}\label{overline omega}
    \int_0^1\frac{\overline{\omega}(t)}{r}\ dt<+\infty.
\end{equation}
Following a proof analogous to that of \eqref{iteration} and \eqref{iteration-sum}, we then derive \eqref{psi w iteration} and \eqref{psi w iteration sum} from \eqref{psi w var j} and \eqref{overline omega}.
\end{proof}

\begin{lemma}\label{lem nabla w infty}
Let $w$ be the unique solution to \eqref{Equ w=u}, we have 
\begin{equation}\label{Dw-infty}
\|\nabla w\|_{L^{\infty}(B_{R/2}(x_0))}\leq C R^{-n}\||\nabla w|+s\|_{L^{1}(B_{R}(x_0))}\quad\mbox{if}\quad p\geq 2,
\end{equation}
where $C$ is a constant depending on $n$, $p$, and $\lambda$; and
\begin{equation}\label{p<2 nable w L infty}
\|\nabla w\|_{L^{\infty}(B_{R/2}(x_0))}\leq CR^{-n/\gamma_0}\| |\nabla w|+s\|_{L^{\gamma_0}(B_R(x_0))}\quad\mbox{if}\quad 1<p<2,\quad 0<\gamma_0<1,
\end{equation}
where $C$ depends on $n$, $p$, $\lambda$, and $\gamma_0$.
\end{lemma}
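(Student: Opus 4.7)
The plan is to combine the Campanato-type iteration of Lemma~\ref{lem-phi-w} with a Giaquinta-style absorbing argument, taking as qualitative starting input the classical local boundedness $\nabla w\in L^\infty_{\mathrm{loc}}$ for solutions of homogeneous $p$-Laplace type equations with bounded measurable coefficients (DiBenedetto, Uhlenbeck, Lieberman), which guarantees that every quantity below is finite. The key observation is that although the right-hand sides of \eqref{iteration} and \eqref{psi w iteration} contain the very quantity $\|\nabla w\|_{L^\infty}$ we wish to bound, the Dini integrals of $\tilde\omega$ and $\overline\omega$ vanish as the scale shrinks, which is precisely what allows an absorption once one iterates over finitely many small sub-scales.

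For the case $p\geq 2$, fix a Lebesgue point $x$ of $\nabla w$ and a ball $B_{2r}(x)\subset\subset B_R(x_0)$ with $r\leq R/4$. The elementary bound $|\mathbf{q}_{x,\varepsilon^{j+1}r}(w)-\mathbf{q}_{x,\varepsilon^{j}r}(w)|\leq C\phi_w(x,\varepsilon^{j}r)$, together with the Lebesgue differentiation theorem (which forces $\mathbf{q}_{x,\rho}(w)\to\nabla w(x)$ as $\rho\to 0$, using that $\phi_w(x,\rho)\to 0$ thanks to \eqref{iteration} and the Dini property of $\tilde\omega$), turns \eqref{iteration-sum} into the pointwise estimate
\[
|\nabla w(x)|\leq C\fint_{B_r(x)}(|\nabla w|+s)\,dy+C\bigl(\|\nabla w\|_{L^\infty(B_r(x))}+s\bigr)\int_0^r\frac{\tilde\omega(t)}{t}\,dt.
\]
Now, for $y\in B_{R/2}(x_0)$ and a small radius $r$ (chosen so that $B_{2r}(y)\subset B_R(x_0)$ and $B_r(y)\subset B_{3R/4}(x_0)$), taking the supremum in $y$ and writing $\widetilde M(\sigma):=\|\nabla w\|_{L^\infty(B_{\sigma R}(x_0))}$ produces
\[
\widetilde M(1/2)\leq Cr^{-n}\bigl\||\nabla w|+s\bigr\|_{L^1(B_R(x_0))}+C\bigl(\widetilde M(3/4)+s\bigr)\int_0^r\frac{\tilde\omega(t)}{t}\,dt.
\]
By the Dini condition \eqref{Dini-cond}, choosing $r$ small makes the integral factor drop below $1/2$; iterating the same estimate along a finite chain of sub-scales that exhaust $[1/2,1)$ and applying a standard absorbing/telescoping lemma then yields \eqref{Dw-infty}.

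For $1<p<2$ the argument is structurally identical, based on \eqref{psi w iteration}--\eqref{psi w iteration sum} and the $L^{\gamma_0}$-minimizer $\mathbf{q}_{x,r;\gamma_0}(w)$ in place of $\mathbf{q}_{x,r}(w)$, with the Dini property \eqref{overline omega} of $\overline\omega$ powering the absorption; the output is \eqref{p<2 nable w L infty}, with the $L^1$-norm replaced by its $L^{\gamma_0}$-counterpart. I expect the main obstacle to be that the total Dini integrals $\int_0^1\tilde\omega(t)/t\,dt$ and $\int_0^1\overline\omega(t)/t\,dt$ are not a priori small, so one cannot close the absorption at the outer scale in a single step; the remedy of iterating on sub-scales of fixed length $h_0=h_0(n,p,\lambda,\omega)$ is what forces $C$ to inherit a dependence on $\omega$ in addition to $n$, $p$, $\lambda$. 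A secondary subtlety is ensuring the Cauchy property of the Campanato approximants $\mathbf{q}_{x,\varepsilon^{j}r}(w)$ at the Lebesgue point, which uses the qualitative $L^\infty_{\mathrm{loc}}$-regularity of $\nabla w$ cited at the outset to justify that the right-hand side of the iteration inequality is finite to begin with.
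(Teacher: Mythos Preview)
Your core strategy---Campanato iteration via Lemma~\ref{lem-phi-w} followed by an absorbing argument over nested sub-scales---matches the paper's Step~1. However, there is a substantive gap in how you justify finiteness of $\|\nabla w\|_{L^\infty_{\mathrm{loc}}}$ before the absorption can close. The results you cite (DiBenedetto, Uhlenbeck, Lieberman) do \emph{not} give local gradient boundedness for homogeneous $p$-Laplace type equations with merely bounded measurable coefficients; they require either constant coefficients or at least H\"older/Dini continuous $a$. Under the present Dini-mean-oscillation hypothesis on $a$, the qualitative statement $\nabla w\in L^\infty_{\mathrm{loc}}$ is precisely part of what this lemma is establishing, so invoking it as input is circular. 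Without it, the right-hand side of your pointwise bound contains $\|\nabla w\|_{L^\infty(B_r(x))}$, which could a priori be $+\infty$, and the absorption never starts.

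The paper closes this gap by a two-step scheme: first prove the estimate as an \emph{a priori} bound under the extra assumption $w\in C^{0,1}(\overline{B_R(x_0)})$ (so all $L^\infty$ quantities are finite and the absorbing argument---run via the weighted telescoping $\sum_{k\geq k_0}3^{-nk}\|\nabla w\|_{L^\infty(B_{\rho_k}(x_0))}$ over the scales $\rho_k=(1-2^{-k})R$---is legitimate); then remove the a priori assumption by mollifying the coefficients, $a_m=a*\varphi_m$, solving \eqref{Equ w=u} with $a_m$ to obtain $w_m\in C^{1,\alpha}_{\mathrm{loc}}$ (for which the classical results you name genuinely apply), and passing to the limit using an energy bound and the monotonicity identity for $V(\xi)=(|\xi|^2+s^2)^{(p-2)/4}\xi$. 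Incidentally, your remark that $C$ must ultimately depend on $\omega$ (through the choice of the small scale $\rho_0$ and the index $k_0$) is correct, even though the lemma's statement lists only $n,p,\lambda$.
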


\begin{proof}
For any given point $x=x_0\in\Omega$, we assume that $B_R(x_0)\subset\Omega$ and $R\in(0,1]$. We shall first prove an a priori estimate under the assumption that $w\in C^{0,1}(\overline{B_R(x_0)})$. Then we will deal with the general case by using approximation.

{\bf Step 1}: The case when $w\in C^{0,1}(\overline{B_R(x_0)})$. 

{\bf Case 1: $p\geq 2$}.  Notice that
\begin{equation*}
|{\bf q}_{x,\varepsilon\rho}(w)-{\bf q}_{x,\rho}(w)|\leq|\nabla w-{\bf q}_{x,\varepsilon\rho}(w)|+|\nabla w-{\bf q}_{x,\rho}(w)|,
\end{equation*}
then by taking the average over $z\in B_{\varepsilon\rho}(x)$, we obtain 
\begin{equation*}
|{\bf q}_{x,\varepsilon\rho}(w)-{\bf q}_{x,\rho}(w)|\leq C\phi_w(x,\varepsilon\rho)+C\phi_w(x,\rho).
\end{equation*}
By iteration, we have 
\begin{equation*}
|{\bf q}_{x,\varepsilon^j\rho}(w)-{\bf q}_{x,\rho}(w)|\leq C\sum_{i=0}^j\phi_w(x,\varepsilon^i\rho).
\end{equation*}
Recalling \eqref{phi-j}, we have 
\begin{equation*}
\lim_{j\rightarrow\infty}\phi_w(x,\varepsilon^j\rho)=0.
\end{equation*}
This together with
\begin{equation*}
\phi_w(x,\varepsilon^j\rho)=\fint_{B_{\varepsilon^j\rho}(x)}|\nabla w-{\bf q}_{x,\varepsilon^j\rho}(w)|
\end{equation*}
implies
\begin{equation*}
\lim_{j\rightarrow\infty}{\bf q}_{x,\varepsilon^j\rho}(w)=\nabla w(x),\quad a.e.\quad x\in \Omega.
\end{equation*}
Thus, for $a.e.$ $x\in\Omega$ with respect to $\nabla w$, we get
\begin{equation}\label{nabla-q w}
|\nabla w(x)-{\bf q}_{x,\rho}(w)|\leq C\sum_{j=0}^{\infty}\phi_w(x,\varepsilon^j\rho).
\end{equation}
By using \eqref{iteration-sum} with $r=\rho$, we obtain, for any $B_{2\rho}(x)\subset\subset B_{R}(x_0)$ with $\rho\in(0,R/4]$,
\begin{equation*}
|\nabla w(x)-{\bf q}_{x,\rho}(w)|\leq C\phi_w(x,\rho)+C\big(\|\nabla w\|_{L^{\infty}(B_{\rho}(x))}+s\big)\int_{0}^{\rho}\frac{\tilde\omega(t)}{t}\ dt.
\end{equation*}
Note that
\begin{align*}
|{\bf q}_{x,\rho}(w)|&\leq \fint_{B_{\rho}(x)}\big(|\nabla w-{\bf q}_{x,\rho}(w)|+|\nabla w|\big)\\
&\leq C\phi_w(x,\rho)+C\rho^{-n}\|\nabla w\|_{L^1(B_{\rho}(x))}\leq C\rho^{-n}\|\nabla w\|_{L^1(B_{\rho}(x))},
\end{align*}
we thus obtain
\begin{equation*}
|\nabla w(x)|\leq C\rho^{-n}\|\nabla w\|_{L^1(B_{\rho}(x))}+C_0\int_{0}^{\rho}\frac{\tilde\omega(t)}{t}\ dt\big(\|\nabla w\|_{L^{\infty}(B_{\rho}(x))}+s\big).
\end{equation*}
Since $\tilde\omega$ satisfies \eqref{Dini-cond}, we can find a constant $\rho_0=\rho_0(n,p,\lambda,\omega,\alpha_1,R)\in(0,R/4]$ small enough such that 
\begin{equation*}
C_0\int_{0}^{\rho_0}\frac{\tilde\omega(t)}{t}\ dt\leq 3^{-1-n}.
\end{equation*}
Then for any $B_{2\rho}(x)\subset\subset B_{R}(x_0)$ with $\rho\in(0,\rho_0]$, 
\begin{equation}\label{Dw-est}
|\nabla w(x)|\leq C\rho^{-n}\|\nabla w\|_{L^1(B_{\rho}(x))}+3^{-n}\big(\|\nabla w\|_{L^{\infty}(B_{\rho}(x))}+s\big).
\end{equation}

For $k\geq1$, denote $\rho_k=(1-2^{-k})R$. Taking $k_0$ large enough such that $2^{-k_0-1}\leq \rho_0$. By using \eqref{Dw-est} with $\rho=2^{-k-1}R$, for any $x\in B_{\rho_k}(x_0)$ and $k\geq k_0$, we have 
\begin{align*}
\|\nabla w\|_{L^\infty(B_{\rho_k}(x_0))}+s\leq C(2^{-k-1}R)^{-n}\|\nabla w\|_{L^1(B_{\rho_{k+1}}(x_0))}+3^{-n}\big(\|\nabla w\|_{L^{\infty}(B_{\rho_{k+1}}(x_0))}+s\big)+s.
\end{align*}
Now we multiply it by $3^{-nk}$ and sum the terms with respect to $k=k_0, k_{0}+1, \dots$ to obtain
\begin{align*}
\sum_{k=k_0}^{\infty}3^{-nk}\big(\|\nabla w\|_{L^\infty(B_{\rho_k}(x_0))}+s\big)\leq CR^{-n}\|\nabla w\|_{L^1(B_{R}(x))}+\sum_{k=k_0+1}^{\infty}3^{-nk}\big(\|\nabla w\|_{L^{\infty}(B_{\rho_{k+1}}(x_0))}+s\big)+Cs.
\end{align*}
This implies 
\begin{equation}\label{step1 nabla w L infty}
\|\nabla w\|_{L^\infty(B_{R/2}(x_0))}+s\leq CR^{-n}\||\nabla w|+s\|_{L^1(B_{R}(x_0))}.
\end{equation}

{\bf Case 2: $1<p<2$.} Following a proof similar to that of \eqref{step1 nabla w L infty}, from \eqref{psi w var j}, \eqref{psi w iteration sum}, and \eqref{overline omega}, we obtain
\begin{equation}\label{step1 case 2 w}
    \|\nabla w\|_{L^{\infty}(B_{R/2}(x_0))}\leq CR^{-n/\gamma_0}\||\nabla w|+s\|_{L^{\gamma_0}(B_R(x_0))}.
\end{equation}

{\bf Step 2}: The general situation.

{\bf Case 1: $p\geq 2$.}
Let $\varphi\geq0$ be a infinitely differentiable function with unit integral supported in $B_1(0)$. Define $\varphi_m(\cdot)=m^d\varphi(m\cdot)$ such that
\begin{equation*}
    \varphi_m\in C^{\infty}_0(B_{\frac{1}{m}}(0)), \quad \varphi _m\geq 0, \quad \int_{B_{\frac{1}{m}}(0)}\varphi_m=1,
\end{equation*}
where $m$ is a positive integer. Subsequently, we choose $R_1\in(0,R) $, $R_2\in (R_1,R)$, and then mollify $a(\cdot)$ by constructing
\begin{equation*}
     a_m(x)=(a(\cdot)*\varphi_m)(x),\quad x\in B_{R_2}(x_0).
\end{equation*}
It is readily seen that $a_m(\cdot)$ is also well defined and satisfies conditions (\ref{condition}) and \eqref{Dini-1} in $B_{R_2}(x_0)$, where $m>\frac{1}{R-R_2}$.

Now we let $w_m\in w+W^{1,p}_0(B_{R_2}(x_0))$ be the unique solution to
\begin{equation}\label{Equ um=u}
\begin{cases}
    -div(a_m(x)(|\nabla w_m|^2+s^2)^{\frac{p-2}{2}}\nabla w_m)=0\quad &in \quad B_{R_2}(x_0),\\w_m=w\quad &on \quad\partial B_{R_2}(x_0).
\end{cases}
\end{equation}
Taking $w_m-w$ as a test function for \eqref{Equ um=u}, we obtain
\begin{align}\label{Equ a_m}
    \int_{B_{R_2}(x_0)}\langle a_m(x)(|\nabla w_m|^2+s^2)^{\frac{p-2}{2}}\nabla w_m, \nabla w_m\rangle dx
    =\int _{B_{R_2}(x_0)}\langle a_m(x)(|\nabla w_m|^2+s^2)^{\frac{p-2}{2}}\nabla w_m, \nabla w\rangle dx.
\end{align}
By applying (\ref{condition}), the following is derived: 
\begin{align*}
    \int_{B_{R_2}(x_0)}\langle a_m(x)(|\nabla w_m|^2+s^2)^{\frac{p-2}{2}}\nabla w_m, \nabla w_m\rangle dx
    \geq \lambda^{-1}\int_{B_{R_2}(x_0)}|\nabla w_m|^pdx.
\end{align*}
    The right-hand side of (\ref{Equ a_m}) can be bounded, by (\ref{condition}),  and Young's inequality, as follows
    \begin{align*}
       &\int_{B_{R_2}(x_0)}\langle a_m(x)(|\nabla w_m|^2+s^2)^{\frac{p-2}{2}}\nabla w_m, \nabla w\rangle dx\\
        &\leq \lambda \int_{B_{R_2}(x_0)}(|\nabla w_m|^2+s^2)^{\frac{p-1}{2}}|\nabla w|dx\\
       &\leq \frac{1}{2\lambda}\int_{B_{R_2}(x_0)}(|\nabla w_m|+s)^pdx+C\int_{B_{R_2}(x_0)}|\nabla w|^pdx.
    \end{align*}
    Therefore, (\ref{Equ a_m}) indicates that
    \begin{equation}\label{inequ Dum}
       \|\nabla w_m\|^p_{L^p(B_{R_2}(x_0))} \leq C\| |\nabla w|+s\|^p_{L^p(B_{R_2}(x_0))},
    \end{equation}
    where $C$ is a constant independent of $m$.
Using \eqref{condition} and \eqref{well known equ} yield
\begin{equation*}
    c_0^{-1}|V(\xi_2)-V(\xi_1)|^2\leq \langle(a(x)(|\xi_2|^2+s^2)^{\frac{p-2}{2}}\xi_2-a(x)(|\xi_1|^2+s^2)^{\frac{p-2}{2}}\xi_1, \xi_2-\xi_1\rangle,
\end{equation*}
where $c_0=c_0(n,p,\lambda)$ is a positive constant. We observe that this inequality holds for $a_m$ as well.
Taking $(w_m-w)1_{B_{R_2}(x_0)}$ as the test function for the equation in \eqref{Equ w=u} and \eqref{Equ um=u}, we obtain
\begin{align*}
    &\int_{B_{R_2}(x_0)}|V(\nabla w_m)-V(\nabla w)|^2dx\\
    &\leq c_0\int_{B_{R_2}(x_0))}\langle a_m(x)(|\nabla w_m|^2+s^2)^{\frac{p-2}{2}}\nabla w_m-a_m(x)(|\nabla u|^2+s^2)^{\frac{p-2}{2}}\nabla w, \nabla w_m-\nabla w\rangle dx\\
    &=\int_{B_{R_2}(x_0)}\langle a(x)(|\nabla w|^2+s^2)^{\frac{p-2}{2}}\nabla w-a_m(|\nabla w|^2+s^2)^{\frac{p-2}{2}}\nabla w, \nabla w_m-\nabla w\rangle dx\\
    &\leq \|((a(x)-a_m(x))(|\nabla w|^2+s^2)^{\frac{p-2}{2}}\nabla w\|_{L^{\frac{p}{p-1}}(B_{R_2}(x_0))}\|w_m-w\|_{W^{1,p}(B_{R_2}(x_0))}.
\end{align*}
Combining the fact that $a_m$ converges to $a$ almost everywhere with (\ref{inequ Dum}), we obtain
\begin{equation}\label{V-V rightarrow}
    \int_{B_{R_2}(x_0)}|V(\nabla w_m)-V(\nabla w)|^2dx\rightarrow 0\quad\mbox{as}~m\rightarrow\infty.
\end{equation}
It is obvious that
\begin{equation*}
    \int_{B_{R_2}(x_0)}|\nabla w_m-\nabla w|^p\ dx\leq C\int_{B_{R_2}(x_0)}(|\nabla w_m|^2+|\nabla w|^2+s^2)^{\frac{p-2}{2}}|\nabla w_m-\nabla w|^2\ dx.
\end{equation*}
Using the above in conjunction with \eqref{well known equ} and \eqref{V-V rightarrow}, we get
\begin{equation*}
    \nabla w_m\rightarrow \nabla w \quad in\quad L^p(B_{R_2}(x_0)).
\end{equation*}
This implies that there exists a subsequence $\{m_j\}$ such that $\nabla w_{m_j}\rightarrow \nabla w$  almost everywhere in $B_{R_2}(x_0)$.
Given the smoothness and regularity of $a_m$ (as referenced in \cite{d1984}), we find that $w_m\in C^{1,\alpha}_{\text{loc}}(B_{R_2}(x_0))$. Hence, the Lipschitz estimate for $u_m$ in $B_{R_1/2}(x_0)$ as stated in \eqref{step1 nabla w L infty} is also valid. That is,
\begin{equation*}
    \| \nabla w_m \|_{L^{\infty}(B_{R_1/2}(x_0))}\leq C R_1^{-n}\|\ |\nabla w_m|+s\|_{L^1(B_{R_1}(x_0))}.
\end{equation*}
Taking $m=m_j\nearrow \infty$ and $R_1\nearrow R$, we obtain the Lipschitz estimate \eqref{Dw-infty} in the neighborhood of any given point $x=x_0$.

{\bf Case 2: $1<p<2$.} By a similar argument as in the proof of \cite[Theorem 1.3]{dz2024} and using \eqref{step1 case 2 w}, we arrive at \eqref{p<2 nable w L infty}. 
\end{proof}

\subsection{Proof of Theorem \ref{main result Pwg}}\label{sec-prf-interior}
Before proceeding the proof, we first recall the following technical lemma, which generalizes similar results in \cite{np2019,np2022}. 

\begin{lemma}\label{lem-comparison}
Let $w$ be the unique solution to \eqref{Equ w=u}. Then,

\textbf{(i)} when $p\geq2$, there exists a constant $C>1$ depending only $n$, $p$ and $\lambda$ such that 
\begin{equation}\label{nabla u-nabla w}
\fint_{B_{2r}(x_0)}|\nabla u-\nabla w|\ dx\leq C\Bigg(\frac{|\mu|(B_{2r}(x_0))}{r^{n-1}}\Bigg)^{\frac{1}{p-1}};
\end{equation}

\textbf{(ii)} when $3/2\leq p<2$, take $\gamma_0=2-p$, there exists a constant $C>1$, depending solely on $n$, $p$, $\lambda$ and $\gamma_0$, such that
\begin{align}\label{nabla u-nabla w gamma}
&\Bigg(\fint_{B_{2r}(x_0)}|\nabla u-\nabla w|^{\gamma_0}\ dx\Bigg)^{\frac{1}{\gamma_0}}\nonumber\\&\leq C\Bigg(\frac{|\mu|(B_{2r}(x_0))}{r^{n-1}}\Bigg)^{\frac{1}{p-1}}+C\frac{|\mu|(B_{2r}(x_0))}{r^{n-1}}\fint_{B_{2r}(x_0)}(|\nabla u|+s)^{2-p}\ dx;
\end{align}

\textbf{(iii)} when $1<p<3/2$, take $\gamma_0=(p-1)^2/2$, there exists a constant $C>1$, depending on $n$, $p$, and $\lambda$, such that 
\begin{align}\label{nabla u-nabla w gamma02}
&\Bigg(\fint_{B_{2r}(x_0)}|\nabla u-\nabla w|^{\gamma_0}\ dx\Bigg)^{\frac{1}{\gamma_0}}\nonumber\\&\leq C\Bigg(\frac{|\mu|(B_{2r}(x_0))}{r^{n-1}}\Bigg)^{\frac{1}{p-1}}+C\frac{|\mu|(B_{2r}(x_0))}{r^{n-1}}\left(\fint_{B_{2r}(x_0)}(|\nabla u|+s)^{\gamma_0}\ dx\right)^{\frac{2-p}{\gamma_0}}.
\end{align}
\end{lemma}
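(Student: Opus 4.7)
The plan is to work with the difference $v := u - w$, which vanishes on $\partial B_{2r}(x_0)$ and is a distributional solution of
\begin{equation*}
-\Div\big(a(x)\big[(|\nabla u|^2+s^2)^{\frac{p-2}{2}}\nabla u - (|\nabla w|^2+s^2)^{\frac{p-2}{2}}\nabla w\big]\big) = \mu
\quad \text{in } B_{2r}(x_0).
\end{equation*}
The two building blocks are the ellipticity bound \eqref{condition} and the standard monotonicity estimates for the vector field $A(\xi)=(|\xi|^2+s^2)^{(p-2)/2}\xi$ as recorded in \cite[(2.2),(2.3)]{dm2010}, which give
\begin{equation*}
\langle A(\xi_1)-A(\xi_2),\xi_1-\xi_2\rangle \geq c\,(|\xi_1|^2+|\xi_2|^2+s^2)^{\frac{p-2}{2}}|\xi_1-\xi_2|^2.
\end{equation*}
Since $v$ need not be bounded when $\mu$ is a measure, I would use truncations $T_k(v)$ as test functions together with level–set decomposition as in the Boccardo--Gallouet scheme, obtaining
\begin{equation*}
\int_{\{|v|<k\}}(|\nabla u|+|\nabla w|+s)^{p-2}|\nabla u-\nabla w|^2\,dx \leq c\,k\,|\mu|(B_{2r}(x_0)).
\end{equation*}

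For \textbf{(i)} $p\geq 2$, this lower bound reduces to $c\int_{\{|v|<k\}}|\nabla u-\nabla w|^p\,dx$. Splitting $B_{2r}(x_0)=\bigcup_k\{2^{k-1}\leq|v|<2^k\}$, applying the above on each level set, and combining with the Sobolev--Poincaré inequality for $T_k(v)$, one deduces the standard estimate $\fint_{B_{2r}(x_0)}|\nabla u-\nabla w|^{p-1}\leq C\,|\mu|(B_{2r}(x_0))/r^{n-1}$; Hölder's inequality then yields \eqref{nabla u-nabla w}.

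For \textbf{(ii)}--\textbf{(iii)} with $1<p<2$, the key identity is the Hölder split
\begin{equation*}
|\nabla u-\nabla w|^{\gamma_0}
=\big[(|\nabla u|+|\nabla w|+s)^{p-2}|\nabla u-\nabla w|^2\big]^{\gamma_0/2}\cdot(|\nabla u|+|\nabla w|+s)^{(2-p)\gamma_0/2}.
\end{equation*}
Applying Hölder with exponents $2/\gamma_0$ and $2/(2-\gamma_0)$ and integrating the first factor via the truncation estimate above (with a suitable choice of $k$ that balances the level‑set cut), one obtains bounds of the form $\big(\tfrac{|\mu|(B_{2r})}{r^{n-1}}\big)^{1/(p-1)}$ plus a remainder weighted by moments of $|\nabla u|+|\nabla w|+s$. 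To replace $|\nabla w|$ by $|\nabla u|$ on the right‑hand side one uses $|\nabla w|\leq|\nabla u|+|\nabla u-\nabla w|$ and reabsorbs, an iteration/absorption that works cleanly when $\gamma_0=2-p$ in the range $3/2\leq p<2$, and which requires the finer choice $\gamma_0=(p-1)^2/2$ for $1<p<3/2$, where $p-1<1/2$ forces two Hölder steps (or an interpolation between $L^{\gamma_0}$ and a higher integrability) before closing the estimate.

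The main obstacle is in \textbf{(iii)}: because the Marcinkiewicz integrability exponent $n(p-1)/(n-1)$ of $\nabla u$ falls below $1$ in this regime, every quantity must remain sub‑$(p-1)$ integrable throughout the argument, and the balancing of exponents in the Hölder split is tight. I expect the selection $\gamma_0=(p-1)^2/2$ arises precisely to keep the second factor $(|\nabla u|+|\nabla w|+s)^{(2-p)\gamma_0/2}$ integrable while still allowing reabsorption of $|\nabla w|$. The technique closely parallels that of Nguyen--Phuc \cite{np2019,np2022,np2023}, to which I would refer for the routine parts of the level‑set iteration, reserving full detail for the exponent bookkeeping that distinguishes the present normalization.
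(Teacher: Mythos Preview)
Your proposal is correct and is essentially the approach the paper takes: the paper simply cites \cite[Lemma 3.3]{dm2011} for (i), \cite[Lemma 3.2]{dz2024} for (ii), and \cite[Theorem 1.2]{np2023} for (iii), omitting all details, and what you have outlined---the Boccardo--Gallou\"et truncation scheme, level-set decomposition, and the H\"older split with the specific choices of $\gamma_0$---is precisely the content of those referenced arguments. Since the comparison in this lemma involves two solutions of the \emph{same} operator with coefficient $a(x)$ (measure data versus zero data), only the uniform ellipticity \eqref{condition} is used and no modulus-of-continuity assumption on $a$ enters, so the proof is indeed identical to the cited literature.
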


\begin{proof}
The case of $p\geq2$ follows from \cite[Lemma 3.3]{dm2011}. For $\frac{3}{2}\leq p<2$, the arguments in \cite[Lemma 3.2]{dz2024} also work in our case. The case of $1<p<\frac{3}{2}$ is similar to that in \cite[Theorem 1.2]{np2023}. Thus, we omit the details.
\end{proof}

\begin{prop}\label{prop-phi-Du}
Let $u\in W_{\text{loc}}^{1,p}(\Omega)$ be a solution to \eqref{p-laplace} and  $\alpha\in(0,1)$ is the constant in Theorem \ref{thm v-BMO}. Then for any $\varepsilon\in(0,1)$ and $B_{2r}(x_0)\subset\Omega$, we have the following assertions:

\textbf{(i)} when $p\geq2$,
\begin{align}\label{ite-phi-u}
\phi_u(x_0,\varepsilon r)&\leq C\varepsilon^{\alpha}\phi_u(x_0,r)+C\varepsilon^{-n}\Bigg(\frac{|\mu|(B_{2r}(x_0))}{r^{n-1}}\Bigg)^{\frac{1}{p-1}}\nonumber\\
&\quad+C\varepsilon^{-n}(\omega(r))^{\frac{2}{p}}\fint_{B_{2r}(x_0)}\big(|\nabla u|+s\big)\ dx,
\end{align}
where $C$ is a constant depending on $n$, $p$, and $\lambda$; 

\textbf{(ii)} when $3/2\leq p<2$, take $\gamma_0=2-p$,
\begin{align}\label{p<2 psi u var r}
\psi_u(x_0,\varepsilon r)&\leq C\varepsilon^{\alpha}\psi_u(x_0,r)+C\varepsilon^{-\frac{n}{\gamma_0}}\Bigg(\frac{|\mu|(B_{2r}(x_0))}{r^{n-1}}\Bigg)^{\frac{1}{p-1}}\nonumber\\
&\quad+C\varepsilon^{-\frac{n}{\gamma_0}}\frac{|\mu|(B_{2r}(x_0))}{r^{n-1}}\fint_{B_{2r}(x_0)}\big(|\nabla u|+s\big)^{2-p}\ dx\nonumber\\
&\quad+C\varepsilon^{-\frac{n}{\gamma_0}}\omega(r)\Bigg(\fint_{B_{2r}(x_0)}\big(|\nabla u|+s\big)^{2-p}\ dx\Bigg)^{\frac{1}{2-p}},
\end{align}
where $C$ is a constant depending on $n$, $p$, $\lambda$, and $\gamma_0$;

\textbf{(iii)} when $1<p<3/2$, for $\gamma_0=(p-1)^2/2$,
\begin{align}\label{p<3/2 psi u var r}
\psi_u(x_0,\varepsilon r)&\leq C\varepsilon^{\alpha}\psi_u(x_0,r)+C\varepsilon^{-\frac{n}{\gamma_0}}\Bigg(\frac{|\mu|(B_{2r}(x_0))}{r^{n-1}}\Bigg)^{\frac{1}{p-1}}\nonumber\\
 &\quad +C\varepsilon^{-\frac{n}{\gamma_0}}\frac{|\mu|(B_{2r}(x_0))}{r^{n-1}}\Bigg(\fint_{B_{2r}(x_0)}\big(|\nabla u|+s\big)^{\gamma_0}\ dx\Bigg)^{\frac{2-p}{\gamma_0}}\nonumber\\
    &\quad +C\varepsilon^{-\frac{n}{\gamma_0}}\omega(r)\Bigg(\fint_{B_{2r}(x_0)}\big(|\nabla u|+s\big)^{\gamma_0}\ dx\Bigg)^{\frac{1}{\gamma_0}},
\end{align}
where $C$ is a constant depending on $n$, $p$, and $\lambda$.
\end{prop}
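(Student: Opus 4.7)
The plan is to combine the triangle inequality for $\phi_u$ (resp.\ $\psi_u$) with the iteration for $\phi_w$ (resp.\ $\psi_w$) in Proposition \ref{prop-phi-Dw}, the $L^\infty$ bound in Lemma \ref{lem nabla w infty}, and the comparison estimates of Lemma \ref{lem-comparison}. For case (i), let $\textbf{q}^\ast \in \mathbb R^n$ realize the infimum defining $\phi_w(x_0,\varepsilon r)$, so that
\begin{equation*}
\phi_u(x_0,\varepsilon r) \leq \fint_{B_{\varepsilon r}(x_0)}|\nabla u - \textbf{q}^\ast|\,dx \leq \fint_{B_{\varepsilon r}(x_0)}|\nabla u - \nabla w|\,dx + \phi_w(x_0,\varepsilon r).
\end{equation*}
Enlarging $B_{\varepsilon r}(x_0)$ to $B_{2r}(x_0)$ at a volume cost $C\varepsilon^{-n}$ and invoking \eqref{nabla u-nabla w} controls the first summand by $C\varepsilon^{-n}(|\mu|(B_{2r}(x_0))/r^{n-1})^{1/(p-1)}$, which is already one of the terms in \eqref{ite-phi-u}.

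For the second summand, Proposition \ref{prop-phi-Dw} gives
\begin{equation*}
\phi_w(x_0,\varepsilon r) \leq C\varepsilon^{\alpha}\phi_w(x_0,r) + C\varepsilon^{-n}(\omega(r))^{2/p}\bigl(\|\nabla w\|_{L^\infty(B_r(x_0))} + s\bigr).
\end{equation*}
To pass from $w$ back to $u$, I would apply two further triangle inequalities. First, $\phi_w(x_0,r) \leq \phi_u(x_0,r) + \fint_{B_r(x_0)}|\nabla u - \nabla w|\,dx$, which with \eqref{nabla u-nabla w} yields $\phi_w(x_0,r) \leq \phi_u(x_0,r) + C(|\mu|(B_{2r}(x_0))/r^{n-1})^{1/(p-1)}$. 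Second, Lemma \ref{lem nabla w infty} applied on $B_{2r}(x_0)$ (where $w$ is a homogeneous solution) gives $\|\nabla w\|_{L^\infty(B_r(x_0))} \leq C\fint_{B_{2r}(x_0)}(|\nabla w|+s)\,dx$, and a final triangle inequality together with \eqref{nabla u-nabla w} replaces $\nabla w$ by $\nabla u$ at the cost of an additional $(|\mu|(B_{2r}(x_0))/r^{n-1})^{1/(p-1)}$. All auxiliary $\mu$-terms produced carry a prefactor $\varepsilon^{\alpha}$ or $\varepsilon^{-n}(\omega(r))^{2/p}$, both of which are dominated by the bare $\varepsilon^{-n}$-weighted $\mu$-term from the first step; absorbing them produces exactly \eqref{ite-phi-u}.

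Cases (ii) and (iii) follow the same three-step template (splitting, Proposition \ref{prop-phi-Dw} iteration, Lemma \ref{lem nabla w infty} reduction), with $\phi$ replaced by $\psi$ and integral $L^1$-averages replaced by $L^{\gamma_0}$-averages for $\gamma_0 = 2-p$ or $(p-1)^2/2$ respectively. Enlargement of $B_{\varepsilon r}$ to $B_{2r}$ inside the $L^{\gamma_0}$-average costs the factor $\varepsilon^{-n/\gamma_0}$ appearing in \eqref{p<2 psi u var r}--\eqref{p<3/2 psi u var r}, and the Lipschitz bound \eqref{p<2 nable w L infty} converts $\|\nabla w\|_{L^\infty(B_r(x_0))}$ into $\bigl(\fint_{B_{2r}(x_0)}(|\nabla w|+s)^{\gamma_0}\,dx\bigr)^{1/\gamma_0}$. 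Replacing $\nabla w$ by $\nabla u$ via \eqref{nabla u-nabla w gamma} (when $3/2 \leq p < 2$) or \eqref{nabla u-nabla w gamma02} (when $1 < p < 3/2$) then injects precisely the mixed measure-times-average term on the right-hand sides of \eqref{p<2 psi u var r} and \eqref{p<3/2 psi u var r}. The only real bookkeeping obstacle is tracking the exponents $\gamma_0$, $1/\gamma_0$, $2-p$, and $1/(2-p)$ so that the three resulting terms collapse cleanly into the stated form; since the comparison estimates of Lemma \ref{lem-comparison} are already formulated in these very exponents, this step is essentially arithmetic rather than analysis.
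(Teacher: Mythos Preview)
Your proposal is correct and uses the same ingredients as the paper (Theorem \ref{thm v-BMO} via Proposition \ref{prop-phi-Dw}, Lemma \ref{lem-Dv-Dw}, Lemma \ref{lem nabla w infty}, and Lemma \ref{lem-comparison}), but the order of the decomposition is slightly different. The paper does not pass through $\phi_w$: instead it first establishes the analogue of \eqref{fint-Dw} directly for $u$,
\[
\phi_u(x_0,\varepsilon r)\leq C\varepsilon^{\alpha}\phi_u(x_0,r)+C\varepsilon^{-n}\fint_{B_r(x_0)}|\nabla u-\nabla v|,
\]
and only then splits $|\nabla u-\nabla v|\leq|\nabla u-\nabla w|+|\nabla v-\nabla w|$, bounding the two pieces by Lemma \ref{lem-comparison} and Lemma \ref{lem-Dv-Dw} + Lemma \ref{lem nabla w infty} respectively. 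Your route (split off $|\nabla u-\nabla w|$ first, then invoke Proposition \ref{prop-phi-Dw} as a black box, then convert $\phi_w$ and $\|\nabla w\|_{L^\infty}$ back to $u$) is more modular and avoids re-running the $v$-oscillation chain, at the price of producing a few extra $\mu$-terms with harmless prefactors $\varepsilon^{\alpha}$ or $\varepsilon^{-n}(\omega(r))^{2/p}$ that you correctly absorb. The two arguments are equivalent rearrangements of the same triangle inequalities; neither offers an analytic advantage over the other.
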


\begin{proof}
{\bf Case 1: $p\geq 2$.} Similar to \eqref{fint-Dw}, we have 
\begin{align}\label{Du-q}
\fint_{B_{\varepsilon r}(x_0)}|\nabla u-{\bf q}_{x_0,\varepsilon r}(u)|\leq C\varepsilon^{\alpha}\fint_{B_{r}(x_0)}|\nabla u-{\bf q}_{x_0,r}(u)|+C\varepsilon^{-n}\fint_{B_{r}(x_0)}|\nabla u-\nabla v|.
\end{align}
By the triangle inequality, \eqref{Equ Dv-Dw}, and \eqref{Dw-infty} with $2r$ in place of $R$, we have 
\begin{align*}
\fint_{B_{r}(x_0)}|\nabla u-\nabla v|&\leq \fint_{B_{r}(x_0)}|\nabla u-\nabla w|+\fint_{B_{r}(x_0)}|\nabla v-\nabla w|\\
&\leq \fint_{B_{r}(x_0)}|\nabla u-\nabla w|+C(\omega(r))^{\frac{2}{p}}\big(\|\nabla w\|_{L^{\infty}(B_r(x_0))}+s\big)\\
&\leq \fint_{B_{r}(x_0)}|\nabla u-\nabla w|+C(\omega(r))^{\frac{2}{p}}\fint_{B_{2r}(x_0)}\big(|\nabla w|+s\big)\\
&\leq C\fint_{B_{2r}(x_0)}|\nabla u-\nabla w|+C(\omega(r))^{\frac{2}{p}}\fint_{B_{2r}(x_0)}\big(|\nabla u|+s\big).
\end{align*}
Substituting it into \eqref{Du-q} and using \eqref{nabla u-nabla w}, we obtain
\begin{align*}
\fint_{B_{\varepsilon r}(x_0)}|\nabla u-{\bf q}_{x_0,\varepsilon r}(u)|&\leq C\varepsilon^{\alpha}\fint_{B_{r}(x_0)}|\nabla u-{\bf q}_{x_0,r}(u)|+C\varepsilon^{-n}\fint_{B_{2r}(x_0)}|\nabla u-\nabla w|\\
&\quad+C\varepsilon^{-n}(\omega(r))^{\frac{2}{p}}\fint_{B_{2r}(x_0)}\big(|\nabla u|+s\big)\\
&\leq C\varepsilon^{\alpha}\fint_{B_{r}(x_0)}|\nabla u-{\bf q}_{x_0,r}(u)|+C\varepsilon^{-n}\big(\frac{|\mu|(B_{2r}(x_0))}{r^{n-1}}\big)^{\frac{1}{p-1}}\\
&\quad+C\varepsilon^{-n}(\omega(r))^{\frac{2}{p}}\fint_{B_{2r}(x_0)}\big(|\nabla u|+s\big).
\end{align*}
This finishes the proof of \eqref{ite-phi-u}.

{\bf Case 2: $3/2\leq p<2$.} Similar to \eqref{Du-q}, we have
\begin{align}\label{var nable u-q gam}
    \Bigg(\fint_{B_{\varepsilon r}(x_0)}|\nabla u-{\bf q}_{x_0,\varepsilon r;\gamma_0}(u)|^{\gamma_0}\Bigg)^{\frac{1}{\gamma_0}}&\leq C \varepsilon^{\alpha}\Bigg(\fint_{B_r(x_0)}|\nabla u-{\bf q}_{x_0,r;\gamma_0}(u)|^{\gamma_0}\Bigg)^{\frac{1}{\gamma_0}}\nonumber\\&\quad+C\varepsilon^{-\frac{n}{\gamma_0}}\Bigg(\fint_{B_r(x_0)}|\nabla u-\nabla v|^{\gamma_0}\Bigg)^{\frac{1}{\gamma_0}}.
\end{align}
Following the proof above, using the triangle inequality and replacing $R$ with $2r$ in \eqref{Equ Dv-Dw-2} and \eqref{p<2 nable w L infty}, we derive
\begin{align*}
    \fint_{B_r(x_0)}|\nabla u-\nabla v|^{\gamma_0}&\leq \fint_{B_r(x_0)}|\nabla u-\nabla w|^{\gamma_0}+\fint_{B_r(x_0)}|\nabla v-\nabla w|^{\gamma_0}\\
    &\leq \fint_{B_r(x_0)}|\nabla u-\nabla w|^{\gamma_0}+C(\omega(r))^{\gamma_0}\big(\|\nabla w\|_{L^{\infty}(B_r(x_0))}+s\big)^{\gamma_0}\\
    &\leq \fint_{B_r(x_0)}|\nabla u-\nabla w|^{\gamma_0}+C (\omega(r))^{\gamma_0}\fint_{B_{2r}(x_0)}\big(|\nabla w|+s\big)^{\gamma_0}\\
    &\leq \fint_{B_{2r}(x_0)}|\nabla u-\nabla w|^{\gamma_0}+C(\omega(r))^{\gamma_0}\fint_{B_{2r}(x_0)}\big(|\nabla u+s\big)^{\gamma_0}.
\end{align*}
Substituting it into \eqref{var nable u-q gam} and using \eqref{nabla u-nabla w gamma}, we obtain
\begin{align*}
    &\Bigg(\fint_{B_{\varepsilon r}(x_0)}|\nabla u-{\bf q}_{x_0,\varepsilon r;\gamma_0}(u)|^{\gamma_0}\Bigg)^{\frac{1}{\gamma_0}}\\
    &\leq C\varepsilon^{\alpha}\Bigg(\fint_{B_r(x_0)}|\nabla u-{\bf q}_{x_0,r;\gamma_0}(u)|^{\gamma_0}\Bigg)^{\frac{1}{\gamma_0}}+C\varepsilon^{-\frac{n}{\gamma_0}}\Bigg(\fint_{B_{2r}(x_0)}|\nabla u-\nabla w|^{\gamma_0}\Bigg)^{\frac{1}{\gamma_0}}\\
    &\quad +C\varepsilon^{-\frac{n}{\gamma_0}}\omega(r)\Bigg(\fint_{B_{2r}(x_0)}\big(|\nabla u|+s\big)^{\gamma_0}\Bigg)^{\frac{1}{\gamma_0}}\\
    &\leq C\varepsilon^{\alpha} \Bigg(\fint_{B_r(x_0)}|\nabla u-{\bf q}_{x_0,r;\gamma_0}(u)|^{\gamma_0}\Bigg)^{\frac{1}{\gamma_0}}+C\varepsilon^{-\frac{n}{\gamma_0}}\Bigg(\frac{|\mu|(B_{2r}(x_0))}{r^{n-1}}\Bigg)^{\frac{1}{p-1}}\\
    &\quad+C\varepsilon^{-\frac{n}{\gamma_0}}\frac{|\mu|(B_{2r}(x_0))}{r^{n-1}}\fint_{B_{2r}(x_0)}\big(|\nabla u|+s\big)^{2-p}\ dx\\&\quad+C\varepsilon^{-\frac{n}{\gamma_0}}\omega(r)\Bigg(\fint_{B_{2r}(x_0)}\big(|\nabla u|+s\big)^{2-p}\ dx\Bigg)^{\frac{1}{2-p}}.
    \end{align*}
    Thus, \eqref{p<2 psi u var r} is proved.

{\bf Case 3: $1<p<3/2$.} By the proof of \eqref{p<2 psi u var r} and utilizing \eqref{nabla u-nabla w gamma02}, we derive \eqref{p<3/2 psi u var r}.
\end{proof}
\begin{proof}[Proof of Theorem \ref{main result Pwg}] We prove this theorem at the Lebesgue point $x=x_0$ of the vector-valued function $\nabla u$, assuming $B_R(x_0)\in \Omega$.

{\bf Case 1: $p\geq2$.} Choose $\varepsilon=\varepsilon(n, p, \lambda, \alpha)\in(0,1/4)$ sufficiently small such that $C\varepsilon^{\alpha}\leq 1/4$, where $C$ is the constant from \eqref{ite-phi-u}. For an integer $j\geq0$, let $r_j=\varepsilon^jR$, $B^j=B_{2r_j}(x_0)$, and
\begin{equation*}
    S_j=\fint_{B^j}\big(|\nabla u|+s\big)\ dx,\quad  {\bf q}_j={\bf q}_{x_0,r_j}, \quad\phi_j=\phi_u(x_0,r_j).
\end{equation*}
From \eqref{ite-phi-u}, we obtain
\begin{equation*}
    \phi_{j+1}\leq \frac{1}{4}\phi_j+C\Bigg(\frac{|\mu|(B^j)}{r_j^{n-1}}\Bigg)^{\frac{1}{p-1}}+C(\omega(r_j))^{\frac{2}{p}}S_j.
\end{equation*}
Let $j_0$ and $d$ be positive integers with $j_0\leq d$. Summing the above expression over $j=j_0, j_0+1,..., d$, we obtain
\begin{equation}\label{d+1 phi j}
    \sum_{j=j_0}^{d+1}\phi_j\leq C\phi_{j_0}+C\sum_{j=j_0}^d\Bigg(\frac{|\mu|(B^j)}{r_j^{n-1}}\Bigg)^{\frac{1}{p-1}}+C\sum_{j=j_0}^d(\omega(r_j))^{\frac{2}{p}}S_j.
\end{equation}
Since
\begin{equation*}
    |{\bf q}_{j+1}-{\bf q}_j|\leq |{\bf q}_{j+1}-\nabla u(x)|+|{\bf q}_j-\nabla u(x)|,
\end{equation*}
taking the average integral over $x\in B_{r_{j+1}}(x_0)$ yields
\begin{equation*}
    |{\bf q}_{j+1}-{\bf q}_j|\leq C\phi_j+C\phi_{j+1},
\end{equation*}
which implies
\begin{equation*}
    |{\bf q}_{d+1}-{\bf q}_{j_0}|\leq C\sum_{j=j_0}^{d+1}\phi_j.
\end{equation*}
This together with \eqref{d+1 phi j} gives
\begin{align}\label{q d+1 +sum phi}
    |{\bf q}_{d+1}|+\sum_{j=j_0}^{d+1}\phi_j\leq |{\bf q}_{j_0}|+C\phi_{j_0}+C\sum_{j=j_0}^d\Bigg(\frac{|\mu|(B^j)}{r_j^{n-1}}\Bigg)^{\frac{1}{p-1}}+C\sum_{j=j_0}^d(\omega(r_j))^{\frac{2}{p}}S_j.
\end{align}
Since
\begin{equation*}
    |{\bf q}_{j_0}|\leq |{\bf q}_{j_0}-\nabla u(x)|+|\nabla u(x)|,
\end{equation*}
taking the average integral over $x\in B_{r_{j_0}}(x_0)$ yields
\begin{equation}\label{q j0}
    |{\bf q}_{j_0}|\leq C\phi_{j_0}+C\fint_{B^{j_0}}|\nabla u(x)|\ dx\leq CS_{j_0}.
\end{equation}
Here, we used the definition of $\phi_{j_0}$, 
\begin{equation*}
    \phi_{j_0}\leq C\fint_{B^{j_0}}|\nabla u|\ dx\leq CS_{j_0}.
\end{equation*}
Furthermore, the comparison principle for Riemann integrals yields
\begin{equation}\label{sum Bj mu}
    \sum_{j=j_0}^d\Bigg(\frac{|\mu|(B^j)}{r_j^{n-1}}\Bigg)^{\frac{1}{p-1}}\leq C\int_0^{2r_{j_0-1}}\Bigg(\frac{|\mu|(B_t(x_0))}{t^{n-1}}\Bigg)^{\frac{1}{p-1}}\frac{dt}{t}.
\end{equation}
Using \eqref{q d+1 +sum phi}-\eqref{sum Bj mu}, we obtain
\begin{align}\label{q d+1 sum phi j}
    |{\bf q}_{d+1}|+\sum_{j=j_0}^{d+1}\phi_j\leq CS_{j_0}+C\int_0^{2r_{j_0-1}}\Bigg(\frac{|\mu|(B_t(x_0))}{t^{n-1}}\Bigg)^{\frac{1}{p-1}}\frac{dt}{t}+C\sum_{j=j_0}^d(\omega(r_j))^{\frac{2}{p}}S_j.
\end{align}
By \eqref{Dini-1} and the comparison principle for Riemann integration, there exists a sufficiently large $j_0=j_0(n, p, \varepsilon, C, \omega)>1$ such that
\begin{equation}\label{omega <1/10}
    (2\varepsilon)^{-n}C\sum_{j=j_0}^{\infty}(\omega(r_j))^{\frac{2}{p}}\leq \frac{1}{10},
\end{equation}
where $C$ is the constant from \eqref{q d+1 sum phi j}.

To prove \eqref{p>2 nabla u point} at $x=x_0$, it suffices to show that
\begin{equation}\label{nable u point 2}
    |\nabla u(x_0)|\leq CS_{j_0}+C\int_0^{2r_{j_0-1}}\Bigg(\frac{|\mu|(B_t(x_0))}{t^{n-1}}\Bigg)^{\frac{1}{p-1}}\frac{dt}{t}.
\end{equation}
We now consider the following cases to demonstrate the validity of \eqref{nable u point 2}.

{ Case i:} If $|\nabla u(x_0)|\leq S_{j_0}$, \eqref{nable u point 2} holds trivially.

{ Case ii:} If $S_j<|\nabla u(x_0)|$, $\forall j_0<j\leq j_1$, and we have $|\nabla u(x_0)|\leq S_{j_1+1}$. Based on the definitions of $\phi_{j_1}$ and ${\bf q}_{j_1}$, as well as the triangle inequality, we obtain
\begin{align}\label{case2 nabla u point}
    |\nabla u(x_0)|&\leq\fint_{B^{j_1+1}}\big(|\nabla u|+s\Big)\ dx\nonumber=\fint_{B^{j_1+1}}|\nabla u|\ dx+s\nonumber\\
    &\leq (2\varepsilon)^{-n}\fint_{B_{r_{j_1}}(x_0)}|\nabla u|\ dx+s
    \leq (2\varepsilon)^{-n}(\phi_{j_1}+|{\bf q}_{j_1}|)+s.
\end{align}
Applying \eqref{q d+1 sum phi j} with $d=j_1-1$, and using \eqref{case2 nabla u point}, we obtain

\begin{align*}
    |\nabla u(x_0)|&\leq (2\varepsilon)^{-n}CS_{j_0}+(2\varepsilon)^{-n}C\int_0^{2r_{j_0-1}}\Bigg(\frac{|\mu|(B_t(x_0))}{t^{n-1}}\Bigg)^{\frac{1}{p-1}}\frac{dt}{t}\\
   &\quad +(2\varepsilon)^{-n}C\sum_{j=j_0}^d(\omega(r_j))^{\frac{2}{p}}|\nabla u(x_0)|+s.
\end{align*}
 Hence, combining the above with \eqref{omega <1/10}, we obtain
\begin{equation*}
    |\nabla u(x_0)|\leq CS_{j_0}+C\int_0^{2r_{j_0-1}}\Bigg(\frac{|\mu|(B_t(x_0))}{t^{n-1}}\Bigg)^{\frac{1}{p-1}}\frac{dt}{t}+\frac{1}{10}|\nabla u(x_0)|+s.
\end{equation*}
This yields \eqref{nable u point 2}.

{ Case iii:} If $S_j<\nabla u(x_0)$ for any $j\leq j_0$, by \eqref{q d+1 sum phi j} and \eqref{omega <1/10}, we have for any $d>j_0$,
\begin{align}\label{Case3 q d+1}
    |{\bf q}_{d+1}|\leq CS_{j_0}+C\int_0^{2r_{j_0-1}}\Bigg(\frac{|\mu|(B_t(x_0))}{t^{n-1}}\Bigg)^{\frac{1}{p-1}}\frac{dt}{t}+\frac{1}{10}|\nabla u(x_0)|.
\end{align}
Since
\begin{equation*}
    |{\bf q}_{x,\rho}-\nabla u(x)|\leq |{\bf q}_{x,\rho}-\nabla u(z)|+|\nabla u(z)-\nabla u(x)|,
\end{equation*}
taking the average integral over $z\in B_{\rho}(x)$ yields
\begin{equation*}
    |{\bf q}_{x,\rho}-\nabla u(x)|\leq C\phi_u(x,\rho)+C\fint_{B_{\rho}(x)}|\nabla u(z)-\nabla u(x)|\ dz.
\end{equation*}
By the definition of $\phi_u$, we obtain for any Lebesgue point $x\in\Omega$ of $\nabla u$,
\begin{equation}\label{lim q =nabla u}
    \lim_{\rho\rightarrow0}{\bf q}_{x,\rho}=\nabla u(x).
\end{equation}
As $d\rightarrow\infty$, using \eqref{lim q =nabla u} and \eqref{Case3 q d+1}, we obtain
\begin{equation*}
    |\nabla u(x_0)|\leq CS_{j_0}+C\int_0^{2r_{j_0-1}}\Bigg(\frac{|\mu|(B_t(x_0))}{t^{n-1}}\Bigg)^{\frac{1}{p-1}}\frac{dt}{t}+\frac{1}{10}|\nabla u(x_0)|.
\end{equation*}
We arrive at \eqref{nable u point 2}. Thus, the proof of \eqref{p>2 nabla u point} is completed.

{\bf Case 2: $3/2\leq p<2$.}  
The proof of \cite[Theorem 1.1]{dz2024} is also applicable to  \eqref{3/2<p<2 nabla u point}. Thus, we omit the details.

{{\bf Case 3:} $ 1<p<3/2$.} Choose $\varepsilon=\varepsilon(n, p, \lambda, \alpha)\in(0,1/4)$ sufficiently small such that $C\varepsilon^{\alpha}\leq 1/4$, where $C$ is the constant from \eqref{p<3/2 psi u var r}. For an integer $j\geq0$, let $r_j=\varepsilon^jR$, $B^j=B_{2r_j}(x_0)$, and instead we define
\begin{equation*}
    U_j:=\Bigg(\fint_{B^j}\big(|\nabla u|+s\big)^{\gamma_0}\ dx\Bigg)^{\frac{1}{\gamma_0}},\quad {\bf q}_j={\bf q}_{x_0,r_j;\gamma_0},\quad \psi_j=\psi_u(x_0,r_j).
\end{equation*}
Then by replicating the proof of \eqref{p>2 nabla u point} and \eqref{3/2<p<2 nabla u point}, and using \eqref{p<3/2 psi u var r}, we obtain \eqref{1<p<3/2 nabla u point}. Therefore, Theorem \ref{main result Pwg} is proved.
\end{proof}

\section{Modulus of continuity estimates of gradient}\label{sec-modulus}
In this section, we derive an a modulus of continuity estimate of the gradient and complete the proof of Theorem \ref{main-result-cont}.

Denote 
\begin{align}\label{def-tildeW}
{\bf{\tilde W}}_{\frac{1}{p},p}^\rho(|\mu|)(x)=\sum_{i=1}^{\infty}\varepsilon^{\alpha_1 i}\big({\bf W}_{\frac{1}{p},p}^{\varepsilon^{-i}\rho}(|\mu|)(x)[\varepsilon^{-i}\rho\leq R/2]+{\bf W}_{\frac{1}{p},p}^{R/2}(|\mu|)(x)[\varepsilon^{-i}\rho>R/2]\big)
\end{align}
and
\begin{align}\label{def tilde I}
    {\bf {\tilde I}}_1^{\rho}(|\mu|)(x)=\sum_{i=1}^{\infty}\varepsilon^{\alpha_1i}\big({\bf I}_1^{\varepsilon^{-i}\rho}(|\mu|)(x)[\varepsilon ^{-i}\rho\leq R/2]+{\bf I}_1^{R/2}(|\mu|)(x)[\varepsilon ^{-i}\rho>R/2]\big).
\end{align}
Set 
\begin{equation*}
h(x,r)=\big(\frac{|\mu|(B_{r}(x))}{r^{n-1}}\big)^{\frac{1}{p-1}}, \quad g(x,r)=h(x,r)^{p-1}
\end{equation*}
and define 
\begin{equation*}
\tilde h(x,t):=\sum_{i=1}^{\infty}\varepsilon^{\alpha_1 i}\big(h(x,\varepsilon^{-i}t)[\varepsilon^{-i}t\leq R/2]+h(x,R/2)[\varepsilon^{-i}t>R/2]\big)
\end{equation*}
and
\begin{equation}\label{def-tildeh}
    {\tilde g}(x,t):=\sum_{i=1}^{\infty}\varepsilon^{\alpha_1i}\big(g(x,\varepsilon^{-i}t)[\varepsilon^{-i}t\leq R/2]+g(x,R/2)[\varepsilon^{-i}t>R/2]\big).
\end{equation}
Here we used the Iverson bracket notation, i.e., $[P]=1$ if $P$ is true and $[P]=0$ otherwise. By Proposition \ref{prop-phi-Du} and using a similar argument as in the proof of Lemma \ref{lem-phi-w}, we derive the following result.

\begin{lemma}
Let $u\in W_{\text{loc}}^{1,p}(\Omega)$ be a solution to \eqref{p-laplace}, $B_{2r}(x)\subset\subset B_{R}(x_0)\subset\Omega$ with $r\leq R/4$, and let $\alpha_1\in (0,\alpha)$ be given in \eqref{alpha1}. Then for any $\rho\in(0,r]$, the following assertions hold:

(i) if $p\geq 2$, 
\begin{align}\label{iteration-u}
\phi_u(x,\rho)\leq C\big(\frac{\rho}{r}\big)^{\alpha_1}\phi_u(x,r)+C\tilde h(x,2\rho)+C\tilde\omega(\rho)\big(\|\nabla u\|_{L^{\infty}(B_{2r}(x))}+s\big),
\end{align}
and 
\begin{align}\label{iteration-sum-u}
\sum_{j=0}^{\infty}\phi_u(x,\varepsilon^j\rho)\leq C\big(\frac{\rho}{r}\big)^{\alpha_1}\phi_u(x,r)+C\int_0^{\rho}\frac{\tilde h (x,t)}{t}\ dt+C\big(\|\nabla u\|_{L^{\infty}(B_{2r}(x))}+s\big)\int_{0}^{\rho}\frac{\tilde\omega(t)}{t}\ dt,
\end{align}
where $C$ is a constant depending on $n$, $p$, and $\lambda$, and $\tilde\omega(\cdot)$ is defined in \eqref{def-tilde-omega};

(ii) if $1<p<2$,
\begin{align*}
    \psi_u(x,\rho)&\leq C\big(\frac{\rho}{r}\big)^{\alpha_1}\psi_u(x,r)+C {\tilde h}(x,2\rho)+C{\tilde g}(x,2\rho)\big(\|\nabla u\|_{L^{\infty}(B_{2r}(x))}+s\big)^{2-p}\\
   &\quad +C{\overline{\omega}}(\rho)\big(\|\nabla u\|_{L^{\infty}(B_{2r}(x))}+s\big),
\end{align*}
and 
\begin{align*}
    \sum_{j=0}^{\infty}\psi_u(x,\varepsilon^j\rho)&\leq C\big(\frac{\rho}{r}\big)^{\alpha_1}\psi_u(x,r)+C\int_0^{\rho}\frac{{\tilde h}(x,t)}{t}\ dt\nonumber\\
    &\quad +C\big(\|\nabla u\|_{L^{\infty}(B_{2r}(x))}+s\big)^{2-p}\int_0^{\rho}\frac{{\tilde g}(x,t)}{t}\ dt\nonumber\\
    &\quad+C\big(\|\nabla u\|_{L^{\infty}(B_{2r}(x))}+s\big)\int_0^{\rho}\frac{{\overline{w}}(t)}{t}\ dt,
\end{align*}
where $C$ is a constant depending on $n$, $p$, $\lambda$, and $\gamma_0$, and $\overline{\omega}(\cdot)$ is defined in \eqref{def overline omega}.
\end{lemma}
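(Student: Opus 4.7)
The plan is to iterate the one-step estimates from Proposition~\ref{prop-phi-Du} in direct parallel to the argument used for $w$ in Lemma~\ref{lem-phi-w}, and then to convert the resulting geometric sums into integrals via a standard telescoping trick. The whole argument is mechanical once the notation is in place; no really new idea is required beyond what was already used for the homogeneous case $w$.

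First I treat the case $p\geq 2$. I apply \eqref{ite-phi-u} on every ball $B_{2r'}(x)\subset\subset B_R(x_0)$ with $\varepsilon$ chosen as in \eqref{alpha1} so that $C\varepsilon^{\alpha}\leq \varepsilon^{\alpha_1}$, and I bound the average on the right-hand side of \eqref{ite-phi-u} by the pointwise inequality $\fint_{B_{2r'}(x)}(|\nabla u|+s)\,dy\leq \|\nabla u\|_{L^\infty(B_{2r'}(x))}+s$. Writing $h(x,t)=(|\mu|(B_t(x))/t^{n-1})^{1/(p-1)}$, this yields
\begin{equation*}
\phi_u(x,\varepsilon r')\leq \varepsilon^{\alpha_1}\phi_u(x,r')+Ch(x,2r')+C(\omega(r'))^{2/p}\bigl(\|\nabla u\|_{L^\infty(B_{2r'}(x))}+s\bigr).
\end{equation*}
Iterating this inequality $j$ times exactly as in \eqref{phi-j}, and invoking the definitions of $\tilde{h}$ (see \eqref{def-tildeh}) and $\tilde{\omega}$ (see \eqref{def-tilde-omega}), I obtain
\begin{equation*}
\phi_u(x,\varepsilon^j r)\leq \varepsilon^{\alpha_1 j}\phi_u(x,r)+C\tilde h(x,2\varepsilon^j r)+C\tilde\omega(\varepsilon^j r)\bigl(\|\nabla u\|_{L^\infty(B_{2r}(x))}+s\bigr),
\end{equation*}
where I have used the monotonicity of $L^\infty$-norms in the ball to replace the sliding radius $\varepsilon^{j-i}r'$ with the fixed $2r$. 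For arbitrary $\rho\in(0,r]$, I pick the unique nonnegative integer $j$ with $\varepsilon^{j+1}<\rho/r\leq\varepsilon^j$ and apply the previous estimate with $\varepsilon^{-j}\rho$ in place of $r$; since $\varepsilon^{-j}\rho\in[\rho,r]$, this produces \eqref{iteration-u}.

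For the summation estimate \eqref{iteration-sum-u}, I replace $\rho$ by $\varepsilon^k\rho$ in \eqref{iteration-u} and sum over $k\geq 0$. The leading term sums as a convergent geometric series because $\varepsilon^{\alpha_1}<1$; the two remaining series $\sum_{k}\tilde h(x,2\varepsilon^k\rho)$ and $\sum_{k}\tilde\omega(\varepsilon^k\rho)$ are comparable (up to constants depending on $\varepsilon$, hence on $n,p,\lambda$) to the Riemann integrals $\int_0^\rho \tilde h(x,t)\,dt/t$ and $\int_0^\rho \tilde\omega(t)\,dt/t$ by \cite[Lemma 2.7]{dk2017}.

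The case $1<p<2$ is handled in the same way, with \eqref{ite-phi-u} replaced by \eqref{p<2 psi u var r} when $3/2\leq p<2$ and by \eqref{p<3/2 psi u var r} when $1<p<3/2$, and with $\psi_u$ in place of $\phi_u$ and ${\bf q}_{x,r;\gamma_0}(u)$ in place of ${\bf q}_{x,r}(u)$. The only new bookkeeping is that the one-step estimate now carries two measure-dependent contributions: a standalone $h(x,2r')$-type term, and a term in which $|\mu|(B_{2r'}(x))/(2r')^{n-1}=g(x,2r')/C$ is multiplied by the $(2-p)/\gamma_0$-th power of the local $L^{\gamma_0}$-average of $|\nabla u|+s$. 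I bound that average pointwise by $\|\nabla u\|_{L^\infty(B_{2r'}(x))}+s$, extract the $L^\infty$-factor, and iterate. After iteration the first contribution produces $\tilde h$, the second produces $\tilde g$ with the prefactor $(\|\nabla u\|_{L^\infty(B_{2r}(x))}+s)^{2-p}$, and the oscillation term produces $\overline{\omega}$ defined in \eqref{def overline omega}. Summation over $k$ converts each of these into the corresponding integrals $\int_0^\rho \tilde h(x,t)\,dt/t$, $\int_0^\rho \tilde g(x,t)\,dt/t$, and $\int_0^\rho \overline{\omega}(t)\,dt/t$, again by \cite[Lemma 2.7]{dk2017}.

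There is no real obstacle in this lemma. The only point that requires some care is the bookkeeping just mentioned: one must track which ball carries the $L^\infty$-norm at each step of iteration and observe that because the radii contract, all such norms are dominated by $\|\nabla u\|_{L^\infty(B_{2r}(x))}$, so this factor can be pulled out uniformly before the sum-to-integral step. Everything else is a direct transcription of the argument already carried out in Lemma~\ref{lem-phi-w} for the homogeneous approximation~$w$.
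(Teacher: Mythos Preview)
Your proposal is correct and follows exactly the approach the paper indicates: it iterates the one-step estimates of Proposition~\ref{prop-phi-Du} in the manner of Lemma~\ref{lem-phi-w}, bounding the local averages of $|\nabla u|+s$ by the $L^\infty$-norm on $B_{2r}(x)$, and then passes from geometric sums to integrals via \cite[Lemma~2.7]{dk2017}. (One small bookkeeping remark: the label \eqref{def-tildeh} in the paper is attached to the definition of $\tilde g$, not $\tilde h$; the latter is the unlabeled display just above it.)
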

We now present the continuity estimate for the gradient.
\begin{theorem}\label{thm continuity}
Under the conditions of Theorem \ref{main-result-cont} and assume $\alpha_1\in(0,\alpha)$ with $\alpha$ given in Theorem \ref{thm v-BMO}. Then for any $R\in(0,1]$, $B_R(x_0)\subset\Omega$, and for any Lebesgue points $x,y\in B_{R/4}(x_0)$ of $\nabla u$, there exists a constant $C=C(n, p, \lambda, \alpha_1, \omega)$, it holds that,

(i) when $p\geq2$,
\begin{align}\label{p>2 ux-uy}
|\nabla u(x)-\nabla u(y)|\leq C\mathcal{M}_1\left(\big(\frac{\rho}{R}\big)^{\alpha_1}+\int_{0}^{\rho}\frac{\tilde\omega(t)}{t}\ dt\right)+C\|{\bf{\tilde W}}_{\frac{1}{p},p}^\rho(|\mu|)\|_{L^\infty(B_{R/4}(x_0))},
\end{align}
where $\rho=|x-y|$, $\tilde\omega$ and ${\bf{\tilde W}}_{\frac{1}{p},p}^\rho$ are defined in \eqref{def-tilde-omega} and \eqref{def-tildeW}, respectively, and 
\begin{equation*}
\mathcal{M}_1:=\|{\bf W}^R_{\frac{1}{p},p}(|\mu|)\|_{L^\infty(B_R(x_0))}+CR^{-n}\||\nabla u|+s\|_{L^1(B_{R}(x_0))};
\end{equation*}

(ii) when $3/2\leq p<2$,
\begin{align}\label{p<2 nabla ux-nabla uy}
    |\nabla u(x)-\nabla u(y)|&\leq C\mathcal{M}_2\Bigg(\big(\frac{\rho}{R}\big)^{\alpha_1}+\int_0^{\rho}\frac{\overline{\omega}(t)}{t}\ dt\Bigg)+C\|{\bf {\tilde W}}_{\frac{1}{p},p}^{\rho}(|\mu|)\|_{L^{\infty}(B_{R/4}(x_0))}\nonumber\\
    &\quad +C\mathcal{M}_2^{2-p}\|{\bf {\tilde I}}_1^{\rho}(|\mu|)\|_{L^{\infty}(B_{R/4}(x_0))},
\end{align}
where $\rho=|x-y|$, $\overline{\omega}$, ${\bf {\tilde W}}_{\frac{1}{p},p}^{\rho}$, and ${\bf {\tilde I}}_1^{\rho}$ are defined in \eqref{def-tildeW} and \eqref{def tilde I}, respectively, and 
\begin{equation*}
    \mathcal{M}_2:=\|{\bf I}_1^R(|\mu|)\|_{L^{\infty}(B_R(x_0))}^{\frac{1}{p-1}}+R^{-\frac{n}{2-p}}\||\nabla u|+s\|_{L^{2-p}(B_R(x_0))}.
\end{equation*}

(iii) when $1<p<3/2$,
\begin{align}\label{p<3/2 nabla ux-nabla uy}
    |\nabla u(x)-\nabla u(y)|&\leq C\mathcal{M}_3\Bigg(\big(\frac{\rho}{R}\big)^{\alpha_1}+\int_0^{\rho}\frac{\overline{\omega}(t)}{t}\ dt\Bigg)+C\|{\bf {\tilde W}}_{\frac{1}{p},p}^{\rho}(|\mu|)\|_{L^{\infty}(B_{R/4}(x_0))}\nonumber\\
    &\quad +C\mathcal{M}_3^{2-p}\|{\bf {\tilde I}}_1^{\rho}(|\mu|)\|_{L^{\infty}(B_{R/4}(x_0))},
\end{align}
where $\rho=|x-y|$, $\overline{\omega}$, ${\bf {\tilde W}}_{\frac{1}{p},p}^{\rho}$, and ${\bf {\tilde I}}_1^{\rho}$ are defined in \eqref{def-tildeW} and \eqref{def tilde I}, respectively, and 
\begin{equation*}
    \mathcal{M}_3:=\|{\bf I}_1^R(|\mu|)\|_{L^{\infty}(B_R(x_0))}^{\frac{1}{p-1}}+R^{-\frac{2n}{(p-1)^2}}\|\nabla u|+s\|_{L^{(p-1)^2/2}(B_R(x_0))}.
\end{equation*}
\end{theorem}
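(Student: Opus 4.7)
The strategy combines the iteration lemma preceding this theorem (for $\phi_u$ when $p\geq 2$, for $\psi_u$ when $1<p<2$) with a Lebesgue-point argument, after which the resulting cascade over dyadic scales is recognized as a truncated Wolff (resp.\ Riesz) potential.

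For Lebesgue points $x, y \in B_{R/4}(x_0)$ of $\nabla u$ with $\rho := |x-y| \leq R/4$ (otherwise the bound follows directly by applying Theorem \ref{main result Pwg} at $x$ and $y$), I would decompose
\begin{equation*}
|\nabla u(x) - \nabla u(y)| \leq |\nabla u(x) - {\bf q}_{x,\rho}(u)| + |{\bf q}_{x,\rho}(u) - {\bf q}_{y,\rho}(u)| + |{\bf q}_{y,\rho}(u) - \nabla u(y)|.
\end{equation*}
In case (i), the Lebesgue-point limit ${\bf q}_{x,r}(u)\to\nabla u(x)$ together with the telescoping device from \eqref{nabla-q w}--\eqref{lim q =nabla u} yields $|\nabla u(x) - {\bf q}_{x,\rho}(u)| \leq C\sum_{j=0}^{\infty}\phi_u(x,\varepsilon^j\rho)$, and the iteration \eqref{iteration-sum-u} at base radius $r = R/4$ (admissible since $B_{2r}(x)\subset B_R(x_0)\subset\Omega$) dominates this sum by
\begin{equation*}
C\Bigl(\tfrac{\rho}{R}\Bigr)^{\alpha_1}\phi_u(x,R/4) + C\!\int_0^\rho\tfrac{\tilde h(x,t)}{t}\,dt + C\bigl(\|\nabla u\|_{L^\infty(B_{R/2}(x))}+s\bigr)\!\int_0^\rho\tfrac{\tilde\omega(t)}{t}\,dt.
\end{equation*}
The middle term is handled by averaging: since $|x-y|=\rho$ forces $B_\rho(y)\subset B_{2\rho}(x)$ and $B_\rho(x)\subset B_{2\rho}(y)$, a standard triangle-plus-averaging argument gives $|{\bf q}_{x,\rho}(u)-{\bf q}_{y,\rho}(u)| \leq C(\phi_u(x,2\rho)+\phi_u(y,2\rho))$, which is already controlled by the same cascade.

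To close case (i), note $\phi_u(x,R/4)\leq CR^{-n}\||\nabla u|+s\|_{L^1(B_R(x_0))}\leq C\mathcal{M}_1$, and $\|\nabla u\|_{L^\infty(B_{R/2}(x_0))}+s\leq C\mathcal{M}_1$ by Theorem \ref{main result Pwg}. A term-by-term change of variables $\sigma = \varepsilon^{-i}t$ in the series defining $\tilde h$ produces
\begin{equation*}
\int_0^\rho \tfrac{\tilde h(x,t)}{t}\,dt = \sum_{i=1}^{\infty}\varepsilon^{\alpha_1 i}\!\int_0^{\varepsilon^{-i}\rho}\tfrac{h(x,\sigma)}{\sigma}\,d\sigma \leq C\|{\bf \tilde W}^\rho_{1/p,p}(|\mu|)\|_{L^\infty(B_{R/4}(x_0))}
\end{equation*}
after the truncation at $R/2$ built into \eqref{def-tildeW}. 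Summing the three pieces delivers \eqref{p>2 ux-uy}. For cases (ii)--(iii), the same scheme runs with $\psi_u$ and ${\bf q}_{\cdot,\cdot;\gamma_0}$ in place of $\phi_u$ and ${\bf q}_{\cdot,\cdot}$; the additional term $C(\|\nabla u\|_{L^\infty}+s)^{2-p}\int_0^\rho \tilde g(x,t)/t\,dt$ coming from the comparison estimates \eqref{nabla u-nabla w gamma}--\eqref{nabla u-nabla w gamma02} is converted, via the analogous change of variables in \eqref{def-tildeh}, to $C\mathcal{M}_i^{2-p}\|{\bf \tilde I}_1^\rho(|\mu|)\|_{L^\infty(B_{R/4}(x_0))}$, while $\|\nabla u\|_{L^\infty}+s$ is absorbed into $C\mathcal{M}_i$ by Theorem \ref{main result Pwg}.

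The main obstacle is justifying the Lebesgue-point limit ${\bf q}_{x,\varepsilon^j\rho;\gamma_0}(u)\to\nabla u(x)$ in the singular regime $1<p<2$, since a priori $\nabla u$ is only in $L^p_{\text{loc}}$ while $\gamma_0<1$. This is resolved by first upgrading to $\nabla u\in L^\infty_{\text{loc}}$ via Theorem \ref{main result Pwg} (finite under the standing assumptions because $\mathcal{M}_2, \mathcal{M}_3$ are finite), after which standard Lebesgue differentiation applied to $\gamma_0$-averages closes the argument. A secondary technicality is handling indices $i$ with $\varepsilon^{-i}\rho > R/2$ in the series for $\tilde h$ and $\tilde g$: the truncations in \eqref{def-tildeW} and \eqref{def tilde I} absorb the boundary contribution, and the discarded tail $\sum_{i:\,\varepsilon^{-i}\rho>R/2}\varepsilon^{\alpha_1 i}$ is $O((\rho/R)^{\alpha_1})$ by geometric summation, matching the first term in \eqref{p>2 ux-uy}--\eqref{p<3/2 nabla ux-nabla uy}.
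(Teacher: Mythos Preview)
Your proposal is correct and follows essentially the same route as the paper: the telescoping estimate $|\nabla u(x)-{\bf q}_{x,\rho}(u)|\leq C\sum_j\phi_u(x,\varepsilon^j\rho)$ (resp.\ $\psi_u$), the summed iteration \eqref{iteration-sum-u}, the averaging argument for the cross term, and the change of variables identifying $\int_0^\rho \tilde h/t\,dt$ with ${\bf\tilde W}^\rho_{1/p,p}$ plus a tail of order $(\rho/R)^{\alpha_1}{\bf W}^R_{1/p,p}$. The paper applies the iteration at $r=R/8$ rather than $R/4$ and averages the cross term over $B_\rho(x)\cap B_\rho(y)$ instead of passing to radius $2\rho$, but these are cosmetic; your handling of the truncation tail and of the extra $\tilde g$--term in the singular range matches the paper's.
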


\begin{proof}
When $p\geq 2$, for any Lebesgue points $x$, $y\in B_{R/4}(x_0)$ of $\nabla u$, we let $\rho=|x-y|$. If $\rho \geq R/8$, we can readily conclude that
\begin{equation*}
    |\nabla u(x)-u(y)|\leq 2\|\nabla u\|_{L^{\infty}(B_{R/2}(x_0))}\leq C\big(\frac{\rho}{R}\big)^{\alpha_1}\|\nabla u\|_{L^{\infty}(B_{R/2}(x_0))}.
\end{equation*}
If $\rho<R/8$, similar to the proof of \eqref{nabla-q w}, we have
\begin{equation}\label{sum phi u}
    |\nabla u(x)-{\bf q}_{x,\rho}|\leq C\sum\limits^{\infty}_{j=0}\phi_u(x,\varepsilon^j\rho),
\end{equation}
where ${\bf q}_{x,\rho}:={\bf q}_{x,\rho}(u)$.
Utilizing the triangle inequality, for any Lebesgue points $x$, $y\in B_{R/4}(x_0)$ of $\nabla u$, we obtain 
\begin{align*}
    |\nabla u(x)-\nabla u(y)|&\leq |\nabla u(x)-{\bf q}_{x,\rho}|+|{\bf q}_{x,\rho}-{\bf q}_{y,\rho}|+|\nabla u(y)-{\bf q}_{y,\rho}|\\
   & \leq 2 \sup_{y_0\in B_{R/4}(x_0)}|\nabla u(y_0)-{\bf q}_{y_0,\rho}|+|\nabla u(z)-{\bf q}_{x,\rho}|+|\nabla u(z)-{\bf q}_{y,\rho}|.
\end{align*}
By taking average integral over $z\in B(x,\rho)\cap B(y,\rho)$, and applying \eqref{sum phi u}, we get 
\begin{align}\label{ux-uy}
    |\nabla u(x)-\nabla u(y)|&\leq C \sup_{y_0\in B_{R/4}(x_0)}|\nabla u(y_0)-{\bf q}_{y_0, \rho}|+C\phi_u(x,\rho)+C\phi_u(y,\rho)\nonumber\\
    &\leq C\sup_{y_0\in B_{R/4}(x_0)}\sum\limits_{j=0}^{\infty}\phi_u(y_0,\varepsilon^j\rho).
\end{align}
Using \eqref{iteration-sum-u} and \eqref{ux-uy}  replacing $r$ with $R/8$, and $B_{R/4}(y_0)\subset B_{R/2}(x_0)$, $\forall y_0\in B_{R/4}(x_0)$, we obtain 
\begin{align}\label{ux-uy2}
    |\nabla u(x)-\nabla u(y)|&\leq C\big(\frac{\rho}{R}\big)^{\alpha_1}\|\nabla u\|_{L^{\infty}(B_{R/2}(y_0))}+C\sup_{y_0\in B_{R/4}(y_0)}\int_0^{\rho}\frac{\tilde h(y_0,t)}{t}\ dt\nonumber\\
    &\quad +C(\|\nabla u\|_{L^{\infty}(B_{R/2}(y_0))}+s)\int^{\rho}_0\frac{\tilde \omega(t)}{t}dt.
\end{align}
 For any $y_0\in B_{R/4}$ and $\rho\in(0,R/2)$, by the definition of $\tilde h(x,t)$ in \eqref{def-tildeh} and \cite[Lemma 4.2]{dz2024}, as well as the comparison principle for Riemann integrals, we derive 
\begin{align*}
   \int_0^{\rho}\frac{\tilde h(y_0,t)}{t} \ dt=&\sum_{i=1}^{\infty}\varepsilon^{\alpha_1i}\int_0^{\rho}\frac{h(y_0,\varepsilon^{-i}t)}{t}[\varepsilon^{-i}t\leq R/2]\ dt\\&+\sum_{i=1}^{\infty}\varepsilon^{\alpha_1i}\int_0^{\rho}\frac{h(y_0,R/2)}{t}[\varepsilon^{-i}t>R/2]\ dt.
\end{align*}
The first term on the right-hand side is equal to
\begin{align*}
&\sum_{i=1}^{\infty}\varepsilon^{\alpha_1i}\Bigg(\int_0^{\rho}\frac{h(y_0,\varepsilon^{-i}t)}{t}\ dt[\varepsilon^{-i}\rho\leq R/2]+\int_0^{\varepsilon^iR/2}\frac{h(y_0,\varepsilon^{-i}t)}{t} \ dt[\varepsilon^{-i}\rho>R/2]\Bigg)\\
&=\sum_{i=1}^{\infty}\varepsilon^{\alpha_1i}\Bigg(\int_0^{\varepsilon^{-i}\rho}\frac{h(y_0,t)}{t}\ dt[\varepsilon^{-i}\rho\leq R/2]+\int^{R/2}_0\frac{h(y_0,t)}{t}\ dt[\varepsilon^{-i}\rho>R/2]\Bigg)\\
&=\sum_{i=1}^{\infty}\varepsilon^{\alpha_1i}\big({\bf W}_{\frac{1}{p},p}^{\varepsilon^{-i}\rho}(|\mu|(y_0)[\varepsilon^{-i}\rho\leq R/2]+{\bf W}_{\frac{1}{p},p}^{R/2}(|\mu|(y_0)[\varepsilon^{-i}\rho>R/2]\big)\\
&={\bf \tilde W}_{\frac{1}{p},p}^{\rho}(|\mu|)(y_0).
\end{align*}
The second term is equal to
\begin{equation*}
\sum_{i=1}^{\infty}\varepsilon^{\alpha_1i}\int_{\varepsilon^iR/2}^{\rho}\frac{h(y_0,R/2)}{t}\ dt[\varepsilon^{-i}\rho>R/2]=\sum_{i=1}^{\infty}\varepsilon^{\alpha_1i}\ln(2\varepsilon^{-i}\rho/R)[\varepsilon^{-i}\rho>R/2]h(y_0,R/2).
\end{equation*}
Now we let $M>0$ be the integer such that $\varepsilon^{-M+1}\rho\leq R/2<\varepsilon^{-M}\rho$. Then we have
\begin{align*}
&h(y_0,R/2)\sum\limits_{i=1}^{\infty}\varepsilon^{\alpha_1i}\ln(2\varepsilon^{-i}\rho/R)[\varepsilon^{-i}\rho>R/2]\\
&=h(y_0,R/2)\sum\limits_{i=M}^{\infty}\varepsilon^{\alpha_1i}\ln(2\varepsilon^{-i}\rho/R)\\
&=\varepsilon^{\alpha_1M}h(y_0,R/2)\sum\limits_{i=M}^{\infty}\varepsilon^{\alpha_1(i-M)}\big(\ln(2\varepsilon^{-M}\rho/R)+(i-M)\ln(\varepsilon^{-1})\big)\\
&\leq\big(\frac{2\rho}{R}\big)^{\alpha_1}h(y_0,R/2)\sum\limits_{i=M}^{\infty}\varepsilon^{\alpha_1(i-M)}(i-M+1)\ln(\varepsilon^{-1})\leq C\big(\frac{\rho}{R}\big)^{\alpha_1}{\bf W}_{\frac{1}{p},p}^R(|\mu|)(y_0).
\end{align*}
Accordingly, we have
\begin{align}\label{sum tilde h}
    \int_0^{\rho}\frac{\tilde h(y_0,t)}{t}\ dt\leq{\bf {\tilde W}}_{\frac{1}{p},p}^{\rho}(|\mu|)(y_0)+C\big(\frac{\rho}{R}\big)^{\alpha_1}{\bf W}^R_{\frac{1}{p},p}(|\mu|)(y_0).
\end{align}
It follows from \eqref{p>2 nabla u point} that
\begin{equation*}
    \|\nabla u\|_{L^{\infty}(B_{R/2}(y_0))}\leq C\|{\bf W}_{\frac{1}{p},p}^R(|\mu|)\|_{L^{\infty}(B_R(y_0))}+CR^{-n}\||\nabla u|+s\|_{L^1(B_R(x))}.
\end{equation*}
Substituting the above and \eqref{sum tilde h} into \eqref{ux-uy2} yields \eqref{p>2 ux-uy}.

When $1<p<2$, by following a similar argument as in the proof of \eqref{p>2 ux-uy} and using the fact that
\begin{equation}\label{ine-1p2}
{\bf W}^\rho_{\frac{1}{p},p}(|\mu|)(x)\leq C\big({\bf I}_1^{2\rho}(|\mu|)(x)\big)^{\frac{1}{p-1}}\quad\mbox{when}~1<p<2,
\end{equation}
we obtain \eqref{p<2 nabla ux-nabla uy} and \eqref{p<3/2 nabla ux-nabla uy}. The details are omitted here.
\end{proof}

\begin{proof}[Proof of Theorem \ref{main-result-cont}.]
It follows from \eqref{def-tildeW} and \eqref{def tilde I} that
\begin{align*}
    \|{\bf {\tilde W}}_{\frac{1}{p},p}^{\rho}(|\mu|)\|_{L^{\infty}(B_{R/4}(x_0))}&\leq \sum_{i=1}^{\infty}\varepsilon^{\alpha_1i}\big(\|{\bf W}_{\frac{1}{p},p}^{\varepsilon^{-i}\rho}(|\mu|)\|_{L^{\infty}(B_{R/4}(x_0))}[\varepsilon^{-i}\rho\leq R/2]\\
    &\quad +\|{\bf W}_{\frac{1}{p},p}^{R/2}(|\mu|)\|_{L^{\infty}(B_{R/4}(x_0))}[\varepsilon^{-i}\rho>R/2]\big)
\end{align*}
and 
\begin{align*}
    \|{\bf {\tilde I}}_{1}^{\rho}(|\mu|)\|_{L^{\infty}(B_{R/4}(x_0))}&\leq \sum_{i=1}^{\infty}\varepsilon^{\alpha_1i}\big(\|{\bf I}_{1}^{\varepsilon^{-i}\rho}(|\mu|)\|_{L^{\infty}(B_{R/4}(x_0))}[\varepsilon^{-i}\rho\leq R/2]\\
    &\quad +\|{\bf I}_{1}^{R/2}(|\mu|)\|_{L^{\infty}(B_{R/4}(x_0))}[\varepsilon^{-i}\rho>R/2]\big).
\end{align*}
By \eqref{assump-Wp}, \eqref{assump Ip}, and the dominated convergence theorem, $\|{\bf {\tilde W}}_{\frac{1}{p},p}^{\rho}\|_{L^{\infty}(B_{R/4}(x_0))}$ and $\|{\bf {\tilde I}}_{1}^{\rho}(|\mu|)\|_{L^{\infty}(B_{R/4}(x_0))}$ converge to $0$ as $\rho\rightarrow 0$. Then combining the fact that the set of Lebesgue points of $\nabla u$ is dense in $\Omega$ and Theorem \ref{thm continuity}, we derive the continuity of $\nabla u$ and thus finish the proof of Theorem \ref{main-result-cont}.
\end{proof}

\section{Global pointwise estimates of the gradient}\label{sec-boundary}
In this section, we primarily aim to establish a global pointwise gradient estimate for the following problem
\begin{equation}\label{bou u=0}
    \begin{cases}
    -\text{div}(a(x)(|\nabla u|^2+s^2)^{\frac{p-2}{2}}\nabla u)=\mu\quad &in \quad \Omega,\\
    u=0\quad &on \quad \partial \Omega,
    \end{cases}
\end{equation}
where $a(\cdot)$ satisfies Assumption \ref{assumption 2}, and $\Omega$ has a $C^{1,\text{DMO}}$ boundary characterized by $R_0$ and $\varrho_0$ as in Definition \ref{def chi}.

\subsection{Auxillary results}
For any fixed point $x_0\in\partial\Omega$, we will derive a gradient estimate near it. Without loss of generality, we assume $x_0=0$. Then we select a local coordinate system at $x_0=0$ and a $C^{1,\text{DMO}}$ function $\chi$ as in Definition $\ref{def chi}$ such that $\chi(0')=0$. 

By using $|\nabla_{x'}\chi(0')|=0$ and \eqref{sup chi-chi}, we get that there exists $R'=R'(\varrho_0,R_0)\in(0,R_0)$ such that
\begin{equation}\label{chi <1/2}
    |\nabla_{x'}\chi(x')|\leq \frac{1}{2}\quad if\quad |x'|\leq R'.
\end{equation}
Denote
\begin{equation*}
    \Gamma(y)=(y'+\chi(y'),y_n) \quad and\quad \Lambda (x)=\Gamma^{-1}(x)=(x',x_n-\chi(x')). 
\end{equation*}
Then it follows from  \cite[(2.8)]{MR2019} that
\begin{equation}\label{Omeag in Gamma in  Omeag}
    \Omega_{r/2}\subset\Gamma(B_r^+)\subset\Omega_{2r}\quad \forall r\in(0,R'/2],
\end{equation}
where $\Omega_r=\Omega\cap B_r$.
Therefore, there exist constants $c_1(n)$ and $c_2(n)$ depending only on $n$ such that for any $x\in\bar{\Omega}$ and $0<r<R'$, it holds
\begin{equation}\label{c1<Omega<c2}
    c_1(n)r^n\leq|\Omega_r(x)|\leq c_2(n)r^n.
\end{equation}
Define $u_1(y)=u(\Gamma(y))$, $a_1(y)=a(\Gamma(y))$, and $\mu_1(B)=\mu(\Gamma(B))$  for any Borel set $B\subset\mathbb R^n$. Then from \eqref{bou u=0}, it follows that
\begin{equation}\label{equ u_1}
    \begin{cases}
        -\text{div}_y\big(a_1(y)(|D\Lambda \nabla_yu_1|^2+s^2)^{\frac{p-2}{2}}(D\Lambda)^TD\Lambda \nabla_yu_1\big)=\mu_1\quad &in\quad B^+_{R'/2},\\
        u_1=0\quad &on\quad B_{R'/2}\cap \partial\mathbb R^n_+.
    \end{cases}
\end{equation}
From \eqref{condition}, we have
\begin{equation}\label{a1 lambda}
   \lambda^{-1}|\eta|^2\leq a_{1,ij}(y)\eta_i\eta_j,\quad |a_{1,ij}(y)|\leq \lambda, \quad y\in B^+_{R'/2},\quad \forall\eta=(\eta_1,...,\eta_n)\in\mathbb R^n.
\end{equation}
We denote
\begin{equation*}
    A_1(y,\xi):=a_1(y)\big(|D\Lambda\xi|^2+s^2\big)^{\frac{p-2}{2}}(D\Lambda)^TD\Lambda\xi.
\end{equation*}
From \eqref{a1 lambda}, we have
\begin{equation}\label{A1 leq lambda1}
    |A_1(y,\xi)|\leq \lambda_1(n,p,\lambda)\big(|\xi|^2+s^2\big)^{\frac{p-1}{2}}.
\end{equation}
We define
\begin{equation*}
    \varrho_1(r):=\sup_{y\in B^+_{R'/2}}\Bigg(\fint_{B^+_{R'/2}\cap B_r(y)}|a_1(y)-(a_1)_{B^+_{R'/2}\cap B_r(y)}|^2\ dy\Bigg)^{1/2}.
\end{equation*}
By $a_1(y)=a(\Gamma(y))$ and \eqref{sup chi-chi}, we have $\varrho_1=\varrho+\varrho_0$. Then we have 
\begin{equation*}
    \int_0^1\frac{(\varrho_1(r))^\frac{2}{p}}{r}\ dr<+\infty\quad \mbox{if}\quad p\geq2,
\end{equation*}
and
\begin{equation*}
    \int_0^1\frac{\varrho_1(r)}{r}\ dr<+\infty\quad \mbox{if}\quad 1<p<2.
\end{equation*}

Let $4r\leq R'$, and consider the unique solution $w\in u_1+W_0^{1,p}(B_{2r}^+)$ to
\begin{equation}\label{w in u_1+W}
    \begin{cases}
        -\text{div}_y\big(a_1(y)(|D\Lambda \nabla_yw|^2+s^2\big)^{\frac{p-2}{2}}(D\Lambda)^TD\Lambda \nabla_yw\big)=0\quad &in\quad B_{2r}^+,\\
        w=u_1\quad & on \quad\partial B_{2r}^+.
    \end{cases}
\end{equation}

We first prove a boundary version of the reverse Hölder inequality.
\begin{lemma}
    Let $w$ be a solution to \eqref{w in u_1+W}. There exists a constant $\theta_1>p$ depending only on $n$, $p$, and $\lambda$, such that for any $t>0$ and for all $B_{\rho}^+(y_0)\subset B_{2r}^+$, the following estimate holds:
    \begin{equation}\label{Caccioppoli inequality}
        \Bigg(\fint_{B_{\rho/2}^+(y_0)}\big(|\nabla_yw|+s\big)^{\theta_1}\ dy\Bigg)^{1/\theta_1}\leq C\Bigg(\fint_{B_{\rho}^+(y_0)}\big(|\nabla_yw|+s\big)^t\ dy\bigg)^{1/t},
    \end{equation}
    where $C>0$ depending on $n$, $p$, $\lambda$, and $t$.
\end{lemma}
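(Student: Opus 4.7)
The plan is to establish the higher integrability estimate \eqref{Caccioppoli inequality} via the classical Meyers--Gehring scheme adapted to the half-ball setting. The starting point is a Caccioppoli-type inequality for $w$. For any half-ball $B_\rho^+(y_0)\subset B_{2r}^+$, test the equation in \eqref{w in u_1+W} with $\varphi=\eta^p(w-c)$, where $\eta\in C_0^\infty(B_\rho(y_0))$ is a standard cutoff with $\eta\equiv 1$ on $B_{\rho/2}(y_0)$ and $|\nabla\eta|\lesssim\rho^{-1}$, and the constant $c$ is chosen to reflect the geometry: take $c=(w)_{B_\rho(y_0)}$ when $B_\rho(y_0)\subset\subset B_{2r}^+$ lies in the interior (i.e., $y_{0,n}\geq\rho$), and $c=0$ otherwise, exploiting the boundary condition $w=u_1=0$ on $B_{2r}\cap\partial\mathbb R^n_+$. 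Using \eqref{a1 lambda} together with the fact that \eqref{chi <1/2} forces $D\Lambda$ to be uniformly close to the identity (so that the ellipticity and growth bounds for $A_1(y,\xi)$ are of standard $p$-Laplacian type with constants depending only on $n$, $p$, $\lambda$), and splitting the $s$-terms via Young's inequality, one obtains
\begin{equation*}
\fint_{B_{\rho/2}^+(y_0)}(|\nabla_y w|+s)^p\,dy\leq C\fint_{B_{\rho}^+(y_0)}\left(\frac{|w-c|}{\rho}\right)^p dy+C\fint_{B_{\rho}^+(y_0)}s^p\,dy.
\end{equation*}

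Next, apply the Sobolev--Poincar\'e inequality on the right-hand side with exponent $p_\ast:=\max\{1,np/(n+p)\}<p$. In the interior case, the standard inequality $\fint_{B_\rho}|w-(w)_{B_\rho}|^p\leq C\rho^p\bigl(\fint_{B_\rho}|\nabla w|^{p_\ast}\bigr)^{p/p_\ast}$ applies directly. In the boundary case, since $w$ vanishes on the flat portion of $\partial B_\rho^+(y_0)$, the even extension of $|\nabla w|$ together with the Sobolev inequality for functions vanishing on a set of positive capacity yields the same bound on the half-ball. Combining this with the Caccioppoli estimate gives the weak reverse H\"older inequality
\begin{equation*}
\left(\fint_{B_{\rho/2}^+(y_0)}(|\nabla_y w|+s)^p\,dy\right)^{1/p}\leq C\left(\fint_{B_{\rho}^+(y_0)}(|\nabla_y w|+s)^{p_\ast}\,dy\right)^{1/p_\ast},
\end{equation*}
valid for all $B_\rho^+(y_0)\subset B_{2r}^+$, with $C=C(n,p,\lambda)$.

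At this stage, Gehring's lemma (in the form due to Giaquinta--Modica, applicable uniformly across interior and boundary balls provided the Caccioppoli estimate is scale-invariant) produces $\theta_1=\theta_1(n,p,\lambda)>p$ together with the estimate
\begin{equation*}
\left(\fint_{B_{\rho/2}^+(y_0)}(|\nabla_y w|+s)^{\theta_1}\,dy\right)^{1/\theta_1}\leq C\left(\fint_{B_{\rho}^+(y_0)}(|\nabla_y w|+s)^{p}\,dy\right)^{1/p}.
\end{equation*}
Finally, to replace $p$ on the right-hand side by an arbitrary $t>0$: if $t\geq p$, H\"older's inequality suffices; if $0<t<p$, apply the above estimate on a nested sequence of half-balls of radii $\rho_k\nearrow\rho$ and use Young's inequality to absorb an $L^p$-type quantity into the $L^{\theta_1}$-norm, then iterate via a standard covering/interpolation argument of Giaquinta type. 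This upgrade step enlarges $C$ to depend additionally on $t$ and yields \eqref{Caccioppoli inequality}.

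The main obstacle will be executing Step 1--2 cleanly near the flat boundary: handling the transition between interior balls (where mean-value subtraction is needed in Sobolev--Poincar\'e) and boundary balls (where the vanishing trace of $w$ permits subtracting zero) uniformly, so that the reverse H\"older constant is independent of the position of $y_0$. The key technical point is that the assumption $|\nabla_{x'}\chi|\leq 1/2$ guarantees that $A_1$ retains a $p$-Laplacian-type ellipticity/growth structure, allowing the classical interior Meyers argument to go through verbatim in the half-ball after the standard even/odd extension.
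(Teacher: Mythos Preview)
Your proposal is correct and follows essentially the same approach as the paper: derive a Caccioppoli inequality (testing with a cutoff times $w$ or $w-c$, splitting into boundary balls where $w=0$ on the flat part and interior balls), combine it with Sobolev--Poincar\'e to obtain a weak reverse H\"older inequality with an exponent $q=p_\ast<p$, and then invoke Gehring's lemma plus a standard covering argument to reach $\theta_1>p$ and the self-improvement down to arbitrary $t>0$. The paper carries out the computation explicitly only for $p\geq 2$ and boundary-centered balls, remarking that the interior case and $1<p<2$ are analogous; your treatment unifies the two cases via the choice of $c$, which is a cosmetic rather than substantive difference.
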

\begin{proof}
We shall prove the case of $p\geq 2$ as an example since $1<p<2$ follows from a similar argument and \cite[Lemma 5.1]{dz2024}.

    For simplicity, denote $\nabla$ as $\nabla_y$ in the proof. First, we establish a Caccioppoli-type inequality on a hemisphere. Let $y_0\in B_{2r}\cap\partial\mathbb R_+^n$ and $B_{2\rho}(y_0)\subset\subset B_{2r}$. Suppose $\zeta$ is a nonnegative smooth function satisfying $\zeta=1$ in $B_{\rho}(y_0)$, $|\nabla \zeta|\leq 2\rho^{-1}$, and $\zeta=0$ outside $B_{2\rho}(y_0)$. Taking $\zeta^pw$ as a test function for \eqref{w in u_1+W}, we obtain
    \begin{align}\label{A+B}
        0=&\int_{B^+_{2r}}\langle a_1(y)(|D\Lambda \nabla w|^2+s^2)^{\frac{p-2}{2}}(D\Lambda)^T D\Lambda \nabla w, \zeta^p\nabla w\rangle\ dy\nonumber\\
        &\quad+p\int_{B_{2r}^+}\langle a_1(y)(|D\Lambda \nabla w|^2+s^2)^{\frac{p-2}{2}}(D\Lambda)^TD\Lambda \nabla w, \zeta^{p-1}w\nabla \zeta\rangle\  dy =: A+B
    \end{align}
It follows from \eqref{a1 lambda} that
    \begin{align*}
        A&=\int_{B_{2r}^+}\zeta^p a_1(y)(D\Lambda\nabla w|^2+s^2)^{\frac{p-2}{2}}(D\Lambda)^TD\Lambda|\nabla w|^2\ dy\\
        &\geq c(n, p, \lambda)\int_{B_{2r}^+}\zeta^p|\nabla w|^p\ dy.
    \end{align*}
    Utilizing \eqref{A1 leq lambda1} and Young's inequality with exponents $p$ and $p/(p-1)$, we derive
    \begin{align*}
        |B|&\leq p\lambda_1\int_{B_{2\rho}^+(y_0)}(|\nabla w|^2+s^2)^{\frac{p-1}{2}}\zeta^{p-1}|w\nabla \zeta|\ dy\\
        &\leq \frac{c(n,p,\lambda)}{2} \int_{B_{2r}^+}\zeta^p(|\nabla w|^2+s^2)^{\frac{p}{2}}\ dy+C\int_{B_{2\rho}^+(y_0)}|w\nabla\zeta|^p\ dy.
    \end{align*}
    Thus, from \eqref{A+B}, we obtain 
    \begin{equation*}
        \int_{B_{\rho}^+(y_0)}|\nabla w|^p\ dy \leq C\rho^{-p}\int_{B_{2\rho}^+(y_0)}|w|^p\ dy+C\rho^ns^p.
    \end{equation*}
    Recalling that $w=u_1=0$ on $B_{2r}\cap\partial\mathbb R_+^n$, by using Sobolev embedding theorem and Poincaré inequality, for any $q$ satisfying max$\{1,\frac{np}{n+p}\}\leq q<p$, we get
    \begin{equation*}
        \Bigg(\int_{B_{2\rho}^+(y_0)}|w|^p\ dy\Bigg)^{1/p}\leq C\rho^{1+\frac{n}{p}-\frac{n}{q}}\Bigg(\int_{B_{2\rho}^+(y_0)}|\nabla w|^q\ dy\Bigg)^{1/q}.
    \end{equation*}
    Combining the last two inequalities yields
    \begin{equation*}
        \Bigg(\fint_{B_{\rho}^+(y_0)}\big(|\nabla w|+s\big)^p\ dy\Bigg)^{1/p}\leq C\Bigg(\fint_{B_{2\rho}^+(y_0)}\big(|\nabla w|+s\big)^q\ dy\Bigg)^{1/q}.
    \end{equation*}
    In a similar manner, we also obtain the interior estimate 
    \begin{equation*}
        \Bigg(\fint_{B_{\rho}(y_0)}\big(|\nabla w|+s\big)^p\ dy \Bigg)^{1/p}\leq C\Bigg(\fint_{B_{2\rho}(y_0)}\big(|\nabla w|+s\big)^q\ dy\Bigg)^{1/q},\quad \forall \ B_{2\rho}(y_0)\subset\subset B_{2r}^+.
    \end{equation*}
    Then using a standard covering argument and Gehring's lemma (see, for instance \cite[Chapter 6]{MR2003}), we derive \eqref{Caccioppoli inequality}.
\end{proof}

By a nearly identical proof, we also obtain a boundary estimate analogous to Lemma \ref{lem-comparison}.
\begin{lemma}
    Let $w$ be the solution to \eqref{w in u_1+W}, we obtain the following results:
    
    (i) when $p\geq 2$, there exists a constant $C=C(n, p, \lambda)>1$ such that
    \begin{equation}\label{bou nabla u_1-nabla w}
        \fint_{B_{2r}^+}|\nabla_yu_1-\nabla_yw|\ dy\leq C\Bigg(\frac{|\mu_1|(B_{2r}^+)}{r^{n-1}}\Bigg)^{\frac{1}{p-1}};
    \end{equation}

    (ii) when $3/2\leq p<2$, take $\gamma_0= 2-p$, there exists a constant $C=C(n, p, \lambda)>1$ such that
    \begin{align}\label{bou nabla u1-nabla w gamma p>3/2}
       & \Bigg(\fint_{B_{2r}^+}|\nabla_yu_1-\nabla_yw|^{\gamma_0}\ dy\Bigg)^{1/\gamma_0}\nonumber\\&\leq C\Bigg(\frac{|\mu_1|(B_{2r}^+)}{r^{n-1}}\Bigg)^{\frac{1}{p-1}}
        +C\frac{|\mu_1|(B_{2r}^+)}{r^{n-1}}\fint_{B_{2r}^+}\big(|\nabla u_1|+s\big)^{2-p}\ dy;
    \end{align}
    
    (iii) when $1<p<3/2$, take $\gamma_0=(p-1)^2/2$, there exists a constant $C=C(n, p, \lambda)>1$ such that
    \begin{align}\label{bou nabla u1-nab w gamma p<3/2}
        &\Bigg(\fint_{B_{2r}^+}|\nabla_yu_1-\nabla_yw|^{\gamma_0}\ dy\Bigg)^{1/\gamma_0}\nonumber\\
        &\leq C\Bigg(\frac{|\mu_1|(B_{2r}^+)}{r^{n-1}}\Bigg)^{\frac{1}{p-1}}+C\frac{|\mu_1|(B_{2r}^+)}{r^{n-1}}\Bigg(\fint_{B_{2r}^+}\big(|\nabla_yu_1|+s\big)^{\gamma_0}\ dy\Bigg)^{2-p/\gamma_0}.
    \end{align}
\end{lemma}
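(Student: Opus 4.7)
The plan is to transfer the interior comparison arguments of Lemma \ref{lem-comparison} to the half-ball $B_{2r}^+$, exploiting the fact that the flattening change of variables preserves every structural hypothesis used in the interior proofs. Two observations make this transfer essentially automatic. First, $u_1 - w \in W_0^{1,p}(B_{2r}^+)$: indeed $w = u_1$ on the spherical portion $(\partial B_{2r}) \cap \mathbb{R}^n_+$ by construction, and both functions vanish on the flat portion $B_{2r} \cap \partial \mathbb{R}^n_+$ by the Dirichlet condition. Hence truncations $T_k(u_1 - w)$ are admissible test functions in the weak formulations of \eqref{equ u_1} and \eqref{w in u_1+W}. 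Second, by \eqref{a1 lambda}, \eqref{A1 leq lambda1}, and \eqref{chi <1/2}, the vector field $A_1(y,\xi) = a_1(y)(|D\Lambda\xi|^2+s^2)^{(p-2)/2}(D\Lambda)^T D\Lambda\xi$ enjoys the same ellipticity, growth, and monotonicity bounds as the model field $a(x)(|\xi|^2+s^2)^{(p-2)/2}\xi$, with constants depending only on $n$, $p$, and $\lambda$.

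For the regular case $p \geq 2$, I would subtract the weak formulations of $u_1$ and $w$, test against $T_k(u_1-w)$, and apply the pointwise monotonicity bound
\[
\langle A_1(y,\nabla_y u_1) - A_1(y,\nabla_y w),\, \nabla_y u_1 - \nabla_y w\rangle \geq c(|\nabla u_1|^2 + |\nabla w|^2 + s^2)^{(p-2)/2}|\nabla u_1 - \nabla w|^2.
\]
The right-hand integral $\int T_k(u_1-w)\, d\mu_1$ is bounded by $k\,|\mu_1|(B_{2r}^+)$, while the left-hand side yields $L^q$ control on $\nabla u_1 - \nabla w$ on the level set $\{|u_1-w|>k\}$ via Sobolev embedding on $B_{2r}^+$ (which is available because $u_1-w$ has zero trace on the flat portion). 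Optimizing in $k$ through the Boccardo--Gallouët marking reproduces \eqref{bou nabla u_1-nabla w} exactly as in \cite[Lemma 3.3]{dm2011}.

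For the singular ranges $3/2 \leq p < 2$ and $1 < p < 3/2$, I would adapt the $V$-functional strategy of \cite[Lemma 3.2]{dz2024} and \cite[Theorem 1.2]{np2023}, respectively. Using \eqref{well known equ}, the monotonicity inequality produces an $L^2$ bound on $|V(\nabla u_1) - V(\nabla w)|$ in terms of $|\mu_1|(B_{2r}^+)$; a Hölder interpolation then converts this into the $L^{\gamma_0}$ bound on $|\nabla u_1 - \nabla w|$ with $\gamma_0 = 2-p$ in the first range and $\gamma_0 = (p-1)^2/2$ in the second. The additional factor $(|\nabla u_1|+s)^{2-p}$ on the right-hand side of \eqref{bou nabla u1-nabla w gamma p>3/2} and \eqref{bou nabla u1-nab w gamma p<3/2} arises from splitting the Hölder exponents; its higher-integrability is provided by the boundary reverse Hölder inequality \eqref{Caccioppoli inequality}.

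The main obstacle is purely one of bookkeeping: one has to verify that every functional-analytic tool used in the full-ball interior proofs — Sobolev embedding, Poincaré inequality on truncation level sets, Lorentz-space duality with the measure, and the reverse Hölder self-improvement — admits a half-ball analogue. All of them do, thanks to the zero Dirichlet trace on $B_{2r}\cap\partial\mathbb{R}^n_+$, which makes $B_{2r}^+$ behave like a full ball for these inequalities, together with the already-established \eqref{Caccioppoli inequality}. Consequently no new analytic input is needed beyond the interior proof and the flattening setup, which is why the authors are content to omit the details.
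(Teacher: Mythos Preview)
Your proposal is correct and follows essentially the same approach as the paper: the authors simply state that the proof is ``nearly identical'' to that of Lemma~\ref{lem-comparison}, which itself defers to \cite[Lemma~3.3]{dm2011}, \cite[Lemma~3.2]{dz2024}, and \cite[Theorem~1.2]{np2023} for the three ranges of $p$. Your elaboration of why the transfer to $B_{2r}^+$ goes through---the zero trace of $u_1-w$ on the full boundary $\partial B_{2r}^+$ and the preservation of the structural conditions under flattening---is exactly the bookkeeping the paper leaves implicit.
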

Analogous to the proof of Lemma \ref{lem-Dv-Dw}, we  obtain an estimate for the difference $\nabla v-\nabla w$.
\begin{lemma}\label{bou |nabla v-nabla w}
    Let $v\in w+W_0^{1,p}(B_r^+)$ be the unique solution to
    \begin{equation}\label{v=w on B_r+}
        \begin{cases}
            -\text{div}\big((a_1)_{B^+_r}(|\nabla_yv|^2+s^2\big)^{\frac{p-2}{2}}\nabla_yv=0\quad &in\quad B_r^+,\\
            v=w\quad &on \quad \partial B_r^+,
        \end{cases}
    \end{equation}
    where $w$ satisfies \eqref{w in u_1+W}. Then we have
    \begin{equation}\label{boundary nabla v-nabla w}
        \fint_{B_r^+}|\nabla v-\nabla w|\ dx\leq C(\varrho_1(r))^{\frac{2}{p}}\big(\|\nabla w\|_{L^{\infty}(B_r^+)}+s\big)\quad if \quad p\geq2,
    \end{equation}
    and 
    \begin{equation}\label{boundary |nabla v-nabla w| gamma}
        \fint_{B_r^+}|\nabla v-\nabla w|^{\gamma_0}\leq C(\varrho_1(r))^{\gamma_0}\big(\|\nabla w\|_{L^{\infty}(B_r^+)}+s\big)^{\gamma_0}\quad if \quad1<p<2, 0<\gamma_0<1.
    \end{equation}
\end{lemma}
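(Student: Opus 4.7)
The plan is to mirror the argument of Lemma \ref{lem-Dv-Dw} in the half-ball setting, with the added complication of the Jacobian factor $D\Lambda$ appearing in \eqref{w in u_1+W} for $w$ but absent from \eqref{v=w on B_r+} for $v$. First I would test both equations with $v-w\in W^{1,p}_0(B_r^+)$, subtract them, and simultaneously add and subtract $\int_{B_r^+}\langle(a_1)_{B_r^+}(|\nabla w|^2+s^2)^{(p-2)/2}\nabla w,\nabla(v-w)\rangle\,dy$. The left-hand side then becomes the standard monotonicity expression bounded below by $c\int_{B_r^+}(|\nabla v|^2+|\nabla w|^2+s^2)^{(p-2)/2}|\nabla v-\nabla w|^2\,dy$ via \cite[(2.2),(2.3)]{dm2010}, exactly as in \eqref{est-Dw-Dv}.

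The right-hand side splits into two error terms. The first, involving $[(a_1)_{B_r^+}-a_1(y)]\,(|\nabla w|^2+s^2)^{(p-2)/2}\nabla w$, is handled as in \eqref{est-Dw-Dv} and contributes $C|B_r^+|(\varrho_1(r))^2(\|\nabla w\|_{L^\infty(B_r^+)}+s)^p$ after H\"older and Young. The second term measures the deviation of the twisted symbol $(|D\Lambda\nabla w|^2+s^2)^{(p-2)/2}(D\Lambda)^TD\Lambda\nabla w$ from the flat symbol $(|\nabla w|^2+s^2)^{(p-2)/2}\nabla w$; using the smoothness of the $p$-Laplace symbol in its vector argument together with $D\Lambda(y)=I+O(|\nabla_{x'}\chi(\Lambda(y)')|)$, the integrand is pointwise bounded by $C|\nabla_{x'}\chi|(|\nabla w|+s)^{p-1}|\nabla(v-w)|$. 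Exploiting $\nabla_{x'}\chi(0')=0$ and a dyadic telescoping argument based on \eqref{sup chi-chi} and the Dini summability of $\varrho_0$, I would derive $\bigl(\fint_{B'_r}|\nabla_{x'}\chi|^2\bigr)^{1/2}\leq C\varrho_0(r)$, so that this second error is likewise bounded by $C|B_r^+|(\varrho_0(r))^2(\|\nabla w\|_{L^\infty(B_r^+)}+s)^p$. Combining the two contributions using $\varrho_1=\varrho+\varrho_0$ and absorbing a quadratic term via Young's inequality yields the half-ball analogue of \eqref{est-Dw-Dv}:
\begin{equation*}
\int_{B_r^+}(|\nabla v|^2+|\nabla w|^2+s^2)^{(p-2)/2}|\nabla v-\nabla w|^2\,dy\leq C|B_r^+|(\varrho_1(r))^2(\|\nabla w\|_{L^\infty(B_r^+)}+s)^p.
\end{equation*}

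From this estimate the two claimed bounds follow by the same post-processing as in Lemma \ref{lem-Dv-Dw}: for $p\geq2$ one drops the degenerate weight to obtain an $L^p$ bound on $\nabla v-\nabla w$ and then applies H\"older to produce \eqref{boundary nabla v-nabla w}; for $1<p<2$ one passes to the auxiliary field $V(\xi)=(|\xi|^2+s^2)^{(p-2)/4}\xi$ via \eqref{well known equ}, invokes the half-ball energy bound $\fint_{B_r^+}|\nabla v|^p\leq C\fint_{B_r^+}(|\nabla w|+s)^p$ (the boundary analogue of \cite[(4.32)]{dm2010}, standard for the constant-coefficient problem \eqref{v=w on B_r+}), and closes with H\"older to get \eqref{boundary |nabla v-nabla w| gamma}. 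The main obstacle I anticipate is the uniform linearization of the $p$-Laplace symbol around the $D\Lambda$ perturbation, which is delicate in the singular range $1<p<2$ since $(|\cdot|^2+s^2)^{(p-2)/2}$ is not locally Lipschitz at the origin when $s=0$; handling this cleanly requires a careful case-split into the regimes $|\nabla w|\gtrsim s$ and $|\nabla w|\lesssim s$ so that the error absorbs properly into the monotonicity integral.
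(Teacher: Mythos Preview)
Your outline goes further than the paper, which merely remarks that the proof is ``analogous to the proof of Lemma \ref{lem-Dv-Dw}'' and gives no details. You correctly spot a complication the paper glosses over: the equation \eqref{w in u_1+W} for $w$ carries the Jacobian factors $D\Lambda$, while the frozen equation \eqref{v=w on B_r+} for $v$ does not, so a second error term beyond the oscillation of $a_1$ is genuinely present. Your plan---linearize the $p$-Laplace symbol around the identity and pick up a factor $|\nabla_{x'}\chi|$---is the right way to handle it, and the structural estimate \cite[(2.3)]{dm2010} works uniformly for all $p>1$, so the case-split you worry about at the end is not actually needed.

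The one real gap is the claim $\bigl(\fint_{B'_r}|\nabla_{x'}\chi|^2\bigr)^{1/2}\leq C\varrho_0(r)$. Telescoping from $\nabla_{x'}\chi(0')=0$ only yields
\[
|(\nabla_{x'}\chi)_{B'_r}|\leq C\sum_{j\geq0}\varrho_0(2^{-j}r)\sim C\int_0^r\frac{\varrho_0(t)}{t}\,dt,
\]
which for a generic Dini modulus (e.g.\ $\varrho_0(r)=|\log r|^{-2}$) is strictly larger than $\varrho_0(r)$. Hence your Jacobian error carries the integrated modulus rather than $\varrho_0(r)$ itself, and the inequality you actually obtain is weaker than \eqref{boundary nabla v-nabla w}--\eqref{boundary |nabla v-nabla w| gamma} as literally stated. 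This is not fatal for the paper's program---every downstream use (the boundary Lipschitz bound, the iteration behind Corollary \ref{cor bou}) needs only Dini-integrability of the multiplier, and the paper is equally loose at the same point (witness the unjustified bound $\fint|\nabla_n u\,\nabla_{x'}\chi|\leq C\varrho_1\fint|\nabla u|$ in the proof of Corollary \ref{cor bou}). But you should either state the comparison estimate with the integrated modulus in place of $\varrho_0(r)$, or, more cleanly, redefine $v$ to solve the frozen problem with $(D\Lambda)_{B_r^+}$ instead of $I$; this costs nothing in Lemma \ref{boundary v-BMO} since a constant linear change of variables preserves the oscillation decay, and then only the \emph{oscillation} of $D\Lambda$---which genuinely is of size $\varrho_0(r)$---enters the comparison.
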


By using Theorem \ref{thm v-BMO} and the arguments in the proof of \cite[Lemma 5.3]{dz2024}, we derive the oscillation estimates as follows.
\begin{lemma}\label{boundary v-BMO}
    Let $v$ be the unique solution to \eqref{v=w on B_r+}. The following estimates hold:
    
    (i) when $p\geq2$, then there exists a constant $C=C(n,p,\lambda)>1$ such that for any half ball $B_{\rho}^+\subset B_{ \rho'}^+\subset B_r^+$, we have
    \begin{equation*}
        \inf_{\theta\in\mathbb R}\Bigg(\fint_{B_{\rho}^+}|\nabla_{y_n}v-\theta|+\nabla_{y'}v|\Bigg)\leq C\big(\frac{\rho}{\rho'}\big)^{\alpha}\inf_{\theta\in\mathbb R}\Bigg(\fint_{B_{\rho'}^+}|\nabla_{y_n}v-\theta|+|\nabla_{y'}v|\Bigg);
    \end{equation*}

    (ii) when $1<p<2$, for any $0<\gamma_0<1$, there exists a constant $C=C(n,p,\lambda,\gamma_0)>1$ such that for any half ball $B_{\rho}^+\subset B_{\rho'}^+\subset B_r^+$, we have
    \begin{equation*}
        \inf_{\theta\in\mathbb R}\Bigg(\fint_{\rho^+}|\nabla_{y_n}v-\theta|^{\gamma_0}+|\nabla_{y'}v|^{\gamma_0}\Bigg)^{1/\gamma_0}\leq C\big(\frac{\rho}{\rho'})^{\alpha}\inf_{\theta\in\mathbb R}\Bigg(\fint_{B_{\rho'}^+}|\nabla_{y_n}v-\theta|^{\gamma_0}+|\nabla_{y'}v|^{\gamma_0}\Bigg)^{1/{\gamma_0}},
    \end{equation*}
    where $\alpha$ is the same constant as in Theorem \ref{thm v-BMO}.
\end{lemma}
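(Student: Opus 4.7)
The plan is to derive these estimates as the boundary analogues of the interior Campanato-type iteration leading to Theorem \ref{thm v-BMO}, exploiting that $v = w = u_1 = 0$ on the flat portion $B_r \cap \{y_n = 0\}$. This zero boundary data forces $\nabla_{y'} v \equiv 0$ on the flat boundary, while $\partial_{y_n} v$ requires only a best-constant comparison; this dichotomy is precisely what the single scalar infimum $\inf_{\theta \in \mathbb R}$ in the statement encodes, in contrast to the vectorial $\inf_{\mathbf q \in \mathbb R^n}$ appearing in Theorem \ref{thm v-BMO}.

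Following \cite[Lemma 5.3]{dz2024}, I would proceed in three steps. First, establish $C^{1,\alpha}$-regularity of $v$ up to the flat portion of $\partial B_r^+$, with the same exponent $\alpha = \alpha(n,p,\lambda) \in (0,1)$ as in Theorem \ref{thm v-BMO}. The most transparent route is via odd reflection: set $\tilde v(y',y_n) = -v(y',-y_n)$ for $y_n < 0$, so that $|\nabla \tilde v|^2$ is even in $y_n$ and the nonlinearity $(|\nabla v|^2+s^2)^{(p-2)/2}$ is preserved under reflection; after a preliminary normalization that replaces $(a_1)_{B_r^+}$ by its block-diagonal truncation in the tangential-normal split, $\tilde v$ becomes a distributional solution of the corresponding homogeneous equation on the full ball $B_r$, to which Theorem \ref{thm v-BMO} applies. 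Second, combine the resulting $C^{1,\alpha}$ regularity with $\nabla_{y'}v|_{y_n=0} \equiv 0$: writing $\theta^* = \partial_{y_n}v(0)$ yields, for $\rho \leq \rho'/2$,
\begin{equation*}
\inf_{\theta \in \mathbb R} \fint_{B_\rho^+} (|\partial_{y_n}v - \theta| + |\nabla_{y'}v|) \leq C\rho^\alpha [\nabla v]_{C^\alpha(\overline{B_{\rho'/2}^+})}.
\end{equation*}
Third, bound the Hölder seminorm on the right by the dilation-invariant quantity $\inf_\theta \fint_{B_{\rho'}^+}(|\partial_{y_n}v-\theta|+|\nabla_{y'}v|)$ via a standard Campanato-characterization argument, yielding (i). Part (ii) will then follow by the same scheme with $L^1$-averages replaced by $L^{\gamma_0}$-averages throughout and Theorem \ref{thm v-BMO}(ii) in place of (i).

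The main obstacle is Step 1: making the reflection argument genuinely yield an equation on the full ball. A direct calculation shows that odd reflection preserves the divergence-form equation only when the tangential-normal off-diagonal block of $(a_1)_{B_r^+}$ vanishes; for a generic constant symmetric matrix the reflected function $\tilde v$ instead solves the equation up to an error coming precisely from those off-diagonal entries. The workaround carried out in \cite[Lemma 5.3]{dz2024} is to first solve the auxiliary problem with $(a_1)_{B_r^+}$ replaced by its block-diagonal truncation $\bar A$, control the difference via a comparison estimate parallel to \eqref{boundary nabla v-nabla w}--\eqref{boundary |nabla v-nabla w| gamma}, reflect the auxiliary solution, and transfer the resulting oscillation estimate back to $v$. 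In the singular range $1 < p < 2$ there is the additional complication of needing the reverse-Hölder inequality \eqref{Caccioppoli inequality} to pass between $L^{\gamma_0}$- and $L^p$-norms, which is why the exponent $\alpha$ and the constant $C$ in case (ii) depend on $\gamma_0$ in addition to $n$, $p$, and $\lambda$.
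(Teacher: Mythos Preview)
Your proposal is correct and follows exactly the approach the paper takes: the paper's proof consists only of the sentence ``By using Theorem \ref{thm v-BMO} and the arguments in the proof of \cite[Lemma 5.3]{dz2024}, we derive the oscillation estimates,'' and you have unpacked precisely what that citation entails. One small correction: in your last sentence you claim the exponent $\alpha$ in case (ii) depends on $\gamma_0$, but the statement explicitly says $\alpha$ is the \emph{same} constant as in Theorem \ref{thm v-BMO}, hence $\alpha=\alpha(n,p,\lambda)$ only; the $\gamma_0$-dependence enters solely through the constant $C$ (via the reverse-H\"older step you mention), not through the decay exponent.
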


With Lemmas \ref{bou |nabla v-nabla w} and \ref{boundary v-BMO} in hand, by mimicking the proof of Lemma \ref{lem nabla w infty}, we are ready to establish an a priori estimate of $\|\nabla_y w\|_{L^{\infty}(B^+_{R/2})}$ by assuming $w\in C^{0,1}(\overline{B^+_R})$. The general case follows from the approximation.
\begin{lemma}
    Let $w$ be the unique solution to \eqref{w in u_1+W}, we have
    \begin{equation}\label{bou nabla w L infty}
        \|\nabla_yw\|_{L^{\infty}(B_{R/2}^+)}\leq CR^{-n}\|\nabla_yw+s\|_{L^1(B_R^+)}\quad \mbox{if} \quad p\geq 2,
    \end{equation}
    where $C$ is a constant depending only on $n$, $p$, and $\lambda$; and
    \begin{equation}\label{bou nabla w L p<2}
        \|\nabla_yw\|_{L^{\infty}(B_{R/2}^+)}\leq CR^{-n/\gamma_0}\|\nabla_yw|+s\|_{L^{\gamma_0}(B_R^+)} \quad \mbox{if} \quad 1<p<2, \quad 0<\gamma_0<1,
    \end{equation}
    where $C$ is a constant depending on $n$, $p$, $\lambda$, and $\gamma_0$.
\end{lemma}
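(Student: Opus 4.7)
The plan is to mirror the interior argument of Lemma \ref{lem nabla w infty} step by step, but replacing the interior oscillation decay and comparison estimates by their boundary counterparts already prepared in Lemmas \ref{bou |nabla v-nabla w} and \ref{boundary v-BMO}, and using the reverse Hölder bound \eqref{Caccioppoli inequality} in place of the interior higher integrability. As in the interior case, I would first argue under the a priori assumption that $w\in C^{0,1}(\overline{B_R^+})$ and then handle the general case by the same mollification-of-the-coefficient / stability argument used in Step~2 of Lemma \ref{lem nabla w infty}, replacing the interior $C^{1,\alpha}$ regularity of the mollified problem by its well-known boundary analog for the straightened half-ball problem.

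The heart of the argument is a Campanato-type iteration adapted to the half-ball. For $p\geq 2$ introduce, at a boundary-type point $y\in B_{R/2}\cap\overline{\mathbb R^n_+}$ and radii $\rho$ with $B_{2\rho}(y)\cap\mathbb R^n_+\subset B_R^+$,
\[
\tilde\phi_w(y,\rho):=\inf_{\theta\in\mathbb R}\fint_{B_\rho^+(y)}\big(|\nabla_{y_n}w-\theta|+|\nabla_{y'}w|\big),
\]
together with the interior functional $\phi_w$ used earlier for balls lying strictly inside $B_R^+$. Comparing $w$ with the solution $v$ of the frozen-coefficient problem \eqref{v=w on B_r+} on a half-ball, and then comparing ${\tilde\phi}_w$ with its analog for $v$ via the triangle inequality, the oscillation decay of Lemma \ref{boundary v-BMO} combined with the comparison estimate \eqref{boundary nabla v-nabla w} yields, exactly as in Proposition \ref{prop-phi-Dw},
\[
\tilde\phi_w(y,\varepsilon\rho)\leq C\varepsilon^\alpha\tilde\phi_w(y,\rho)+C\varepsilon^{-n}(\varrho_1(\rho))^{2/p}\big(\|\nabla w\|_{L^\infty(B_\rho^+(y))}+s\big).
\]
Fixing $\alpha_1\in(0,\alpha)$ and $\varepsilon$ small as in \eqref{alpha1}, iteration in the spirit of Lemma \ref{lem-phi-w} produces the summability bound
\[
\sum_{j=0}^{\infty}\tilde\phi_w(y,\varepsilon^j\rho)\leq C\Big(\tfrac{\rho}{r}\Big)^{\alpha_1}\tilde\phi_w(y,r)+C\big(\|\nabla w\|_{L^\infty(B_r^+(y))}+s\big)\int_0^\rho\frac{\tilde\varrho_1(t)}{t}\,dt,
\]
where $\tilde\varrho_1$ is the Dini-type envelope built from $\varrho_1^{2/p}$ as in \eqref{def-tilde-omega}; the corresponding $\psi_w$-version for $1<p<2$ uses \eqref{boundary |nabla v-nabla w| gamma} instead. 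Since $\int_0^1\tilde\varrho_1(t)/t\,dt<\infty$, one can fix $\rho_0$ small so that the Dini integral on $[0,\rho_0]$ is less than $3^{-1-n}$.

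With this iterative decay in hand, pass to the pointwise limit to recover $\nabla w(y)=\lim_j\mathbf q_{y,\varepsilon^j\rho}$ at Lebesgue points (and use the boundary condition $w=0$ on $\{y_n=0\}$ to interpret the tangential components as vanishing at boundary Lebesgue points, so that only the normal freezing constant $\theta\mathbf e_n$ survives). Exactly as in \eqref{Dw-est}, this gives the pointwise inequality
\[
|\nabla w(y)|\leq C\rho^{-n}\|\nabla w\|_{L^1(B_\rho^+(y))}+3^{-n}\big(\|\nabla w\|_{L^\infty(B_\rho^+(y))}+s\big),
\qquad \rho\in(0,\rho_0],
\]
for $y\in B_{R/2}^+$, where the half-ball is replaced by a full ball when $y$ is far from $\{y_n=0\}$. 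Running the same dyadic-shell absorption scheme used after \eqref{Dw-est}, with $\rho_k=(1-2^{-k})R$ and summing the resulting geometric series in $3^{-nk}$, absorbs the $L^\infty$ term on the right and yields \eqref{bou nabla w L infty}. For $1<p<2$ I would repeat the scheme with $\psi_w$ in place of $\phi_w$ and with the comparison \eqref{boundary |nabla v-nabla w| gamma}, and at the end invoke the reverse Hölder inequality \eqref{Caccioppoli inequality} exactly as in \cite{dz2024} to downgrade the $L^p$-average on the right to an $L^{\gamma_0}$-average, producing \eqref{bou nabla w L p<2}. The main obstacle is the first step: matching the boundary oscillation functional $\tilde\phi_w$ (in which only the normal freezing constant appears) with the interior functional $\phi_w$ on half-balls that straddle the boundary, and verifying that the comparison and freezing arguments still yield a summable Dini tail once the $(D\Lambda)^T D\Lambda$ factor from the straightening is absorbed into the frozen coefficient $(a_1)_{B_r^+}$. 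This is handled by noting that $D\Lambda$ is $\mathrm{DMO}$ in $y'$ by Definition \ref{def chi}, so its oscillation contributes only a multiple of $\varrho_0$, which is already included in $\varrho_1=\varrho+\varrho_0$.
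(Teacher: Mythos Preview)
Your proposal is correct and follows essentially the same route as the paper, which does not spell out a detailed proof but simply instructs the reader to mimic the interior argument of Lemma~\ref{lem nabla w infty} using the boundary comparison estimate (Lemma~\ref{bou |nabla v-nabla w}) and the half-ball oscillation decay (Lemma~\ref{boundary v-BMO}), first under the a~priori hypothesis $w\in C^{0,1}(\overline{B_R^+})$ and then by approximation. Your write-up supplies the natural details of that mimicking---the boundary Campanato functional with only the normal constant frozen, the iteration producing a Dini tail in $\varrho_1$, the dyadic-shell absorption, and the observation that the $D\Lambda$ contribution is already absorbed into $\varrho_1=\varrho+\varrho_0$---all of which are exactly what the paper intends.
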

\begin{lemma}
    Suppose $u_1\in W^{1,p}(B_{R'}^+)$ is a solution to \eqref{equ u_1}, and let $\alpha\in(0,1)$ be the constant in Theorem \ref{thm v-BMO}. Then for any $\varepsilon\in(0,1)$ and $r\in(0,R'/4]$, we have the following estimates:
 
  {(i) when $p\geq 2$}, there exists a constant $C$ depending only $n$, $p$, and $\lambda$ such that
    \begin{align}\label{bou pro 1}
        &\inf_{\theta\in\mathbb R}\Bigg(\fint_{B_{\varepsilon r}^+}|\nabla_{y_n}u_1-\theta|+|\nabla_{y'}u_1|\Bigg)\nonumber\\
       &\leq C\varepsilon^{\alpha}\inf_{\theta\in\mathbb R}\Bigg(\fint_{B_r^+}|\nabla_{y_n}u_1-\theta|
        +|\nabla_{y'}u_1|\Bigg)
        +C\varepsilon^{-n}\Bigg(\frac{|\mu_1|(B_{2r}^+)}{r^{n-1}}\Bigg)^{\frac{1}{p-1}}\nonumber\\
         &\quad+C\varepsilon^{-n}(\varrho_1(r))^{\frac{2}{p}}\fint_{B_{2r}^+}\big(|\nabla u_1|+s\big)\ dy;
    \end{align} 
    {(ii) when $3/2\leq p<2$, take $\gamma_0=2-p$}, there exists a constant $C$ depending only $n$, $p$, and $\lambda$ such that
    \begin{align}\label{bou pro 2}
        &\inf_{\theta\in\mathbb R}\Bigg(\fint_{B_{\varepsilon r}^+}|\nabla_{y_n}u_1-\theta|^{\gamma_0}+|\nabla_{y'}u_1|^{\gamma_0}\Bigg)^{1/\gamma_0}\nonumber\\
        &\leq C\varepsilon^{\alpha}\inf_{\theta\in\mathbb R}\Bigg(\fint_{B_r^+}|\nabla_{y_n}u_1-\theta|^{\gamma_0}+|\nabla_{y'}u_1|^{\gamma_0}\Bigg)^{1/\gamma_0}+C\varepsilon^{-n/\gamma_0}\Bigg(\frac{|\mu_1|(B_{2r}^+)}{r^{n-1}}\Bigg)^{\frac{1}{p-1}}\nonumber\\
        &\quad +C\varepsilon^{-n/\gamma_0}\varrho_1(r)\Bigg(\fint_{B_{2r}^+}\big(|\nabla_yu_1|+s\big)^{2-p}\ dy\Bigg)^{\frac{1}{2-p}}\nonumber\\
        &\quad +C\varepsilon^{-n/\gamma_0}\frac{|\mu_1|(B_{2r}^+)}{r^{n-1}}\fint_{B_{2r}^+}\big(|\nabla_yu_1|+s\big)^{2-p}\ dy;
    \end{align}
    {(iii) when $1<p<3/2$, take $\gamma_0=(p-1)^2/2$}, there exists a constant $C$ depending only $n$, $p$, and $\lambda$ such that
    \begin{align}\label{bou pro 3}
        &\inf_{\theta\in\mathbb R}\Bigg(\fint_{B_{\varepsilon r}^+}|\nabla_{y_n}u_1-\theta|^{\gamma_0}+|\nabla_{y'}u_1|^{\gamma_0}\Bigg)^{1/\gamma_0}\nonumber\\
        &\leq C\varepsilon^{\alpha}\inf_{\theta\in\mathbb R}\Bigg(\fint_{B_r^+}|\nabla_{y_n}u_1-\theta|^{\gamma_0}+|\nabla_{y'}u_1|^{\gamma_0}\Bigg)^{1/\gamma_0}+C\varepsilon^{-n/\gamma_0}\Bigg(\frac{|\mu_1|(B_{2r}^+)}{r^{n-1}}\Bigg)^{\frac{1}{p-1}}\nonumber\\
        &\quad +C\varepsilon^{-n/\gamma_0}\frac{|\mu_1|(B_{2r}^+)}{r^{n-1}}\Bigg(\fint_{B_{2r}^+}\big(|\nabla u_1|+s\big)^{\gamma_0}\ dy\Bigg)^{(2-p)/\gamma_0}\nonumber\\
        &\quad +C\varepsilon^{-n/\gamma_0}\varrho_1(r)\Bigg(\fint_{B_{2r}^+}\big(|\nabla u_1|+s\big)^{\gamma_0}\ dy\Bigg)^{1/\gamma_0}.
    \end{align}
    The $\alpha$ above is the same as in Theorem \ref{thm v-BMO}.
\end{lemma}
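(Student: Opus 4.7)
The plan is to mirror the proof of Proposition \ref{prop-phi-Du} at the boundary: balls become half balls $B_r^+$, the vector infimum $\inf_{\mathbf q\in\mathbb R^n}\fint|\nabla f-\mathbf q|$ becomes the scalar infimum $\inf_{\theta\in\mathbb R}\fint(|\nabla_{y_n} f-\theta|+|\nabla_{y'} f|)$ (which reflects the fact that, on the flat boundary $\partial\mathbb R^n_+$, the tangential components of a frozen-coefficient solution vanishing on $\{y_n=0\}$ are controlled by one scalar constant), the oscillation decay from Theorem \ref{thm v-BMO} is replaced by its boundary counterpart Lemma \ref{boundary v-BMO}, the coefficient-freezing comparison Lemma \ref{lem-Dv-Dw} by Lemma \ref{bou |nabla v-nabla w}, the a priori $L^\infty$-bound Lemma \ref{lem nabla w infty} by \eqref{bou nabla w L infty}--\eqref{bou nabla w L p<2}, and the data-comparison Lemma \ref{lem-comparison} by \eqref{bou nabla u_1-nabla w}, \eqref{bou nabla u1-nabla w gamma p>3/2}, \eqref{bou nabla u1-nab w gamma p<3/2}.

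For part (i), I introduce $w$ solving \eqref{w in u_1+W} on $B_{2r}^+$ and $v$ solving \eqref{v=w on B_r+} on $B_r^+$, and pick $\theta^\ast$ realizing $\inf_{\theta}\fint_{B_r^+}(|\nabla_{y_n}v-\theta|+|\nabla_{y'}v|)$. The triangle inequality gives
\[
\inf_{\theta\in\mathbb R}\fint_{B_{\varepsilon r}^+}\bigl(|\nabla_{y_n}u_1-\theta|+|\nabla_{y'}u_1|\bigr)\le \fint_{B_{\varepsilon r}^+}\bigl(|\nabla_{y_n}v-\theta^\ast|+|\nabla_{y'}v|\bigr)+C\varepsilon^{-n}\fint_{B_r^+}|\nabla u_1-\nabla v|.
\]
Applying Lemma \ref{boundary v-BMO}(i) to the first term produces the factor $C\varepsilon^\alpha\inf_\theta\fint_{B_r^+}(|\nabla_{y_n}v-\theta|+|\nabla_{y'}v|)$, which a second triangle inequality converts into the same infimum with $u_1$ in place of $v$, at the cost of one more copy of $\fint_{B_r^+}|\nabla u_1-\nabla v|$. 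The comparison term is split as $\nabla u_1-\nabla v=(\nabla u_1-\nabla w)+(\nabla w-\nabla v)$ and estimated by \eqref{bou nabla u_1-nabla w} and \eqref{boundary nabla v-nabla w}; the factor $\|\nabla w\|_{L^\infty(B_r^+)}$ produced by the latter is absorbed using \eqref{bou nabla w L infty} applied on a slightly larger half ball and then reduced to $\fint_{B_{2r}^+}(|\nabla u_1|+s)$ once more via \eqref{bou nabla u_1-nabla w}. Collecting the terms yields \eqref{bou pro 1}.

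Parts (ii) and (iii) follow the same scheme in the $L^{\gamma_0}$-mean formulation. Lemma \ref{boundary v-BMO}(ii) supplies the decay factor $\varepsilon^\alpha$, the coefficient-freezing estimate \eqref{boundary |nabla v-nabla w| gamma} contributes $\varrho_1(r)\bigl(\|\nabla w\|_{L^\infty(B_r^+)}+s\bigr)$, the data comparisons \eqref{bou nabla u1-nabla w gamma p>3/2} (with $\gamma_0=2-p$ when $3/2\le p<2$) and \eqref{bou nabla u1-nab w gamma p<3/2} (with $\gamma_0=(p-1)^2/2$ when $1<p<3/2$) produce the measure-data term together with the corresponding averaged gradient factor, and \eqref{bou nabla w L p<2} finally converts $\|\nabla w\|_{L^\infty(B_r^+)}$ into $(\fint_{B_{2r}^+}(|\nabla u_1|+s)^{\gamma_0})^{1/\gamma_0}$. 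The precise algebraic pairings on the right-hand sides of \eqref{bou pro 2}--\eqref{bou pro 3} come verbatim from these ingredients.

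The main obstacle I anticipate is the careful bookkeeping of two distinct error sources---the measure-data term $(|\mu_1|(B_{2r}^+)/r^{n-1})^{1/(p-1)}$ from the comparison $u_1\leftrightarrow w$ and the coefficient-oscillation term $(\varrho_1(r))^{2/p}$ (or $\varrho_1(r)$ in the singular range) from the comparison $w\leftrightarrow v$---combined, for $1<p<2$, with the need to absorb $\|\nabla w\|_{L^\infty(B_r^+)}$ back into an $L^{\gamma_0}$-average of $|\nabla u_1|+s$ without introducing $L^1$ or $L^p$ means that are not already present on the right-hand sides of \eqref{bou pro 2}--\eqref{bou pro 3}. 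Once this tracking is performed carefully, the three inequalities assemble directly from the ingredients established in Sections \ref{sub-auxiliary-est} and \ref{sec-boundary}.
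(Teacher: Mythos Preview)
Your proposal is correct and follows essentially the same approach as the paper: introduce $w$ and $v$ via \eqref{w in u_1+W} and \eqref{v=w on B_r+}, use Lemma \ref{boundary v-BMO} and the triangle inequality to reduce to the comparison term $\fint_{B_r^+}|\nabla u_1-\nabla v|$ (respectively its $L^{\gamma_0}$ version), then split $\nabla u_1-\nabla v$ through $\nabla w$ and invoke \eqref{bou nabla u_1-nabla w}--\eqref{bou nabla u1-nab w gamma p<3/2}, \eqref{boundary nabla v-nabla w}--\eqref{boundary |nabla v-nabla w| gamma}, and \eqref{bou nabla w L infty}--\eqref{bou nabla w L p<2} exactly as you describe. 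The paper carries out part (i) in detail and simply cites the same list of ingredients for (ii) and (iii), so your outline matches it step for step.
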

\begin{proof}
    For a function $f\in W^{1,p}(B_{R'}^+)$ and $B_r^+\subset B_{R'}^+$, there exist $\theta_r(f)$ and $\theta_{r;\gamma_0}(f)$ such that
    \begin{equation*}
        \fint_{B_r^+}|\nabla_{y_n} f-\theta_r(f)|+|\nabla _{y'}f|=\inf_{\theta\in\mathbb R}\fint_{B_r^+}|\nabla_{y_n} f-\theta|+|\nabla_{y'}f|
    \end{equation*}
    and
    \begin{equation*}
        \fint_{B_r^+}|\nabla _{y_n}f-\theta_{r;\gamma_0}(f)|^{\gamma_0}+|\nabla _{y'}f|^{\gamma_0}=\inf_{\theta\in\mathbb R}\fint_{B_r^+}|\nabla_{y_n}f-\theta|^{\gamma_0}+|\nabla _{y'}f|^{\gamma_0}.
    \end{equation*}
    When $p\geq2$,  by Lemma \ref{boundary v-BMO} and the triangle inequality, we have
     \begin{align}\label{bou var r Du_1-theta}
         &\fint_{B_{\varepsilon r}^+}\big(|\nabla_{y_n}u_1-\theta_{\varepsilon r}(u_1)|+|\nabla_{y'}u_1|\big)
         \leq\fint_{B_{\varepsilon r}^+}\big(|\nabla_{y_n}u_1-\theta_{\varepsilon r}(v)|+|\nabla_{y'}u_1|\big)\nonumber\\
         &\leq C\fint_{B_{\varepsilon r}^+}\big(|\nabla_{y_n}u_1-\nabla_{y_n}v|+|\nabla_{y'}u_1-\nabla_{y'}v|\big)+C\fint_{B_{\varepsilon r}^+}\big(|\nabla_{y_n}v-\theta_{\varepsilon r}(v)|+|\nabla_{y'}v|\big)\nonumber\\
         &\leq C\varepsilon^{\alpha}\fint_{B_r^+}\big(|\nabla_{y_n}v-\theta_r(v)|+|\nabla_{y'}v|\big)+C\varepsilon^{-n}\fint_{B_r^+}|\nabla u_1-\nabla  v|\nonumber\\
         &\leq C\varepsilon^{\alpha}\fint_{B_r^+}\big(|\nabla_{y_n}v-\theta_r(u_1)|+|\nabla_{y'}v|\big)+C\varepsilon^{-n}\fint_{B_r^+}|\nabla u_1-\nabla v|\nonumber\\
         &\leq C\varepsilon^{\alpha}\fint_{B_r^+}\big(|\nabla_{y_n}u_1-\theta_r(u_1)|+|\nabla_{y'}u_1|\big)+C\varepsilon^{-n}\fint_{B_r^+}|\nabla u_1-\nabla v|.
     \end{align}
     Using \eqref{boundary nabla v-nabla w}, \eqref{bou nabla w L infty}, and the triangle inequality, we have
     \begin{align*}
        \fint_{B_r^+}|\nabla u_1-\nabla_yv|&\leq \fint_{B_r^+}|\nabla u_1-\nabla_y w|+\fint_{B_r^+}|\nabla w-\nabla v|\\
         &\leq\fint_{B_r^+}|\nabla u_1-\nabla w|+C(\varrho_1(r))^{\frac{2}{p}}\big(\|\nabla w\|_{L^{\infty}(B_r^+)}+s\big)\\
         &\leq C\fint_{B_{2r}^+}|\nabla u_1-\nabla w|+C(\varrho_1(r))^{\frac{2}{p}}\fint_{B_{2r}^+}\big(|\nabla w|+s\big)\\
         &\leq C\fint_{B_{2r}^+}|\nabla u_1-\nabla w|+C(\varrho_1(r))^{\frac{2}{p}}\fint_{B_{2r}^+}\big(|\nabla u_1|+s\big).
     \end{align*}
     Combining the last inequality with \eqref{bou nabla u_1-nabla w} and \eqref{bou var r Du_1-theta}, we get \eqref{bou pro 1}.
     
     When $3/2\leq p<2$ and $1<p<3/2$, by a similar argument as in the proof of \eqref{bou pro 1}, and using Lemma \ref{boundary v-BMO}, \eqref{bou nabla u1-nabla w gamma p>3/2}, \eqref{bou nabla u1-nab w gamma p<3/2}, \eqref{boundary |nabla v-nabla w| gamma} and \eqref{bou nabla w L p<2}, we derive \eqref{bou pro 2} and \eqref{bou pro 3}. The details are omitted here.
\end{proof}
     We now define
     \begin{equation*}
         \overline{\phi}(x,r)=\inf_{\theta\in\mathbb R}\big(\fint_{\Omega_r(x)}|\nabla_nu-\theta|+|\nabla_{x'}u|\big),
     \end{equation*}
     and 
     \begin{equation*}
         \overline{\psi}(x,r)=\inf_{\theta\in\mathbb  R}\big(\fint_{\Omega_r(x)}|\nabla_nu-\theta|^{\gamma_0}+|\nabla_{x'}u|^{\gamma_0}\big)^{1/\gamma_0}.
     \end{equation*}

     \begin{corollary}\label{cor bou}
    Let $u\in W_0^{1,p}(\Omega)$ be a solution of \eqref{bou u=0} and $x_0\in\partial\Omega$. Then for any $\varepsilon\in(0,1/4)$, $r\leq R'/2$, there exist some constant $C$ depending $n$, $p$, and $\lambda$ such that the following assertions hold:
    
    (i) when $p\geq 2$, then
    \begin{align}\label{bou cor p>2}
        \overline{\phi}(x_0,\varepsilon r)\leq C\varepsilon^{\alpha}\overline{\phi}(x_0,r)+C\varepsilon^{-n}\Bigg(\frac{|\mu|(\Omega_{2r}(x_0))}{r^{n-1}}\Bigg)^{\frac{1}{p-1}}+C\varepsilon^{-n}(\varrho_1(r))^{\frac{2}{p}}\fint_{\Omega_{2r}(x_0)}\big(|\nabla u|+s\big);
    \end{align}

    (ii) when $3/2\leq p<2$ and $\gamma_0=2-p$, then
    \begin{align}\label{bou cor 3/2<p}
        \overline{\psi}(x_0,\varepsilon r)&\leq C\varepsilon^{\alpha}\overline{\psi}(x_0,r)+C\varepsilon^{-n/\gamma_0}\Bigg(\frac{|\mu|(\Omega_{2r}(x_0))}{r^{n-1}}\Bigg)^{\frac{1}{p-1}}\nonumber\\
        &\quad +C\varepsilon^{-n/\gamma_0}\varrho_1(r)\Bigg(\fint_{\Omega_{2r}(x_0)}\big(|\nabla u|+s\big)^{2-p}\Bigg)^{\frac{1}{2-p}}\nonumber\\
        &\quad +C\varepsilon^{-n/\gamma_0}\frac{|\mu|(\Omega_{2r}(x_0))}{r^{n-1}}\fint_{\Omega_{2r}(x_0)}\big(|\nabla u|+s\big)^{2-p};
    \end{align}

    (iii) when $1<p<3/2$ and $\gamma_0=(p-1)^2/2$, then
    \begin{align*}
        \overline{\psi}(x_0,\varepsilon r)&\leq C\varepsilon^{\alpha}\overline{\psi}(x_0,r)+C\varepsilon^{-n/\gamma_0}\Bigg(\frac{|\mu|(\Omega_{2r}(x_0))}{r^{n-1}}\Bigg)^{\frac{1}{p-1}}\nonumber\\
        &\quad +C\varepsilon^{-n/\gamma_0}\varrho_1(r)\Bigg(\fint_{\Omega_{2r}(x_0)}\big(|\nabla u|+s\big)^{\gamma_0}\Bigg)^{1/\gamma_0}\nonumber\\
        &\quad +C\varepsilon^{-n/\gamma_0}\frac{|\mu|(\Omega_{2r}(x_0))}{r^{n-1}}\Bigg(\fint_{\Omega_{2r}(x_0)}\big(|\nabla u|+s\big)^{\gamma_0}\Bigg)^{(2-p)/\gamma_0}.
    \end{align*}
\end{corollary}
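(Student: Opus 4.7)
The plan is to deduce the three estimates of the corollary directly from the half-space counterparts \eqref{bou pro 1}--\eqref{bou pro 3} via the straightening map introduced above. Without loss of generality I would place $x_0$ at the origin and adopt the local coordinate frame of Definition \ref{def chi}, so that $\nabla_{x'}\chi(0')=0$ and \eqref{chi <1/2} provides $|\nabla_{x'}\chi|\le 1/2$ on $B'_{R'}(0')$. Setting $u_1(y)=u(\Gamma(y))$ produces a solution of the flattened problem \eqref{equ u_1}, with coefficient oscillation controlled by $\varrho_1=\varrho+\varrho_0$ and local measure $\mu_1(E)=\mu(\Gamma(E))$, so that $|\mu_1|(B_{2r}^+)\le |\mu|(\Omega_{4r}(x_0))$.

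I would first apply \eqref{bou pro 1}, \eqref{bou pro 2}, or \eqref{bou pro 3} to $u_1$ on $B_r^+$, obtaining the desired inequalities phrased in terms of half-ball quantities. To transfer them back to $\Omega$, I rely on three comparability facts. First, the two-sided inclusion \eqref{Omeag in Gamma in  Omeag} together with \eqref{c1<Omega<c2} makes averages over $\Omega_{\varepsilon r}(x_0)$ and $\Omega_{2r}(x_0)$ equivalent, up to $n$-dependent constants, to averages over $B_{\varepsilon r}^+$ and $B_{2r}^+$ when $r\le R'/2$. Second, from $\nabla_y u_1=(D\Gamma)^T(\nabla u)\circ\Gamma$, and because $D\Gamma$ differs from the identity only through $\nabla_{x'}\chi$, the quantity $|\nabla_{y_n}u_1-\theta|+|\nabla_{y'}u_1|$ matches $|\partial_n u-\theta|+|\nabla_{x'}u|$ up to a pointwise multiplicative error of order $|\nabla_{x'}\chi(y')|$. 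Third, the measure-theoretic comparison above translates the Riesz-type contributions after enlarging $r$ by a factor of $2$ and absorbing this into the constants.

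The principal obstacle is the transfer of the infimum over $\theta$ inside $\overline\phi$ and $\overline\psi$: the non-identity entries of $D\Gamma$ generate an error expression of the form $|\nabla_{x'}\chi|\cdot(|\partial_n u-\theta|+|\nabla_{x'}u|)$, whose average on $B'_r(0')$ is controlled by $\sup_{|y'|\le r}|\nabla_{x'}\chi(y')-\nabla_{x'}\chi(0')|$, hence by a multiple of $\varrho_0(r)$ plus lower-order terms. This error mirrors the coefficient-oscillation contribution on the right-hand side and can therefore be absorbed into the term carrying $\varrho_1=\varrho+\varrho_0$, at the price of letting $C$ depend additionally on $R_0$ and $\varrho_0$. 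Once this absorption is carried out, \eqref{bou cor p>2}, \eqref{bou cor 3/2<p}, and the $1<p<3/2$ analogue follow directly. I expect the bookkeeping of these $\chi$-driven corrections, together with the need to preserve the precise anisotropic structure $|\partial_n u-\theta|+|\nabla_{x'}u|$ under the change of variables, to be the only delicate part of the argument.
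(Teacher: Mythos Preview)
Your approach is essentially the same as the paper's. The paper likewise applies \eqref{bou pro 1}--\eqref{bou pro 3} to $u_1$ and transfers the half-ball estimates back to $\Omega$ via $\Gamma$, using \eqref{Omeag in Gamma in  Omeag} and \eqref{c1<Omega<c2} for the volume comparisons; the chain-rule error $\fint|\partial_n u\,\nabla_{x'}\chi|$ (arising from $\nabla_{y'}u_1=\nabla_{x'}u+\partial_n u\,\nabla_{x'}\chi$) is bounded by a term of the form $\varrho_1(\cdot)\fint|\nabla u|$ and absorbed into the $\varrho_1$ contribution on the right, after which a final rescaling $\varepsilon/4\to\varepsilon$, $2r\to r$ gives the stated inequalities.
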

\begin{proof}
When $p\geq2$, letting $\varepsilon\in(0,1)$ and $r\in(0,R'/4)$, by using the method of changing variables as well as \eqref{Omeag in Gamma in  Omeag}, \eqref{c1<Omega<c2}, \eqref{sup chi-chi}, and the triangle inequality, we have
\begin{align*}
    &\inf_{\theta\in\mathbb R}\Bigg(\fint_{B_{\varepsilon r}^+}|\nabla_{y_n}u_1-\theta|+|\nabla_{y'}u_1|\Bigg)\nonumber\\
    &=\inf_{\theta\in\mathbb R}\Bigg(\fint_{\Gamma(B_{\varepsilon r}^+)}|\nabla_nu-\theta|+|\nabla_nu\nabla_{x'}\chi+\nabla_{x'}u|\Bigg)\nonumber\\
    &\geq C\inf_{\theta\in\mathbb R}\Bigg(\fint_{\Omega_{\varepsilon r/2}}|\nabla_nu-\theta|+|\nabla_{x'}u|\Bigg)-C_1\fint_{\Omega_{\varepsilon r/2}}|\nabla_nu\nabla_{x'}\chi|\nonumber\\
    &\geq C\overline{\phi}(0,\varepsilon r/2)-C_1\varrho_1(\varepsilon r/2)\fint_{\Omega_{\varepsilon r/2}}|\nabla u|,
\end{align*}
where both $C$ and $C_1$ are non-negative constants depending only on $n$. On the other hand,
\begin{align*}
   & \inf_{\theta\in\mathbb R}\Bigg(\fint_{B_r^+}|\nabla_{y_n}u_1-\theta|+|\nabla_{y'}u_1|\Bigg)\nonumber\\
    &=\inf_{\theta\in\mathbb R}\Bigg(\fint_{\Gamma(B_r^+)}|\nabla_nu-\theta|+|\nabla_nu\nabla_{x'}\chi+\nabla_{x'}u|\Bigg)\nonumber\\
    &\leq C_2 \inf_{\theta\in\mathbb R}\Bigg(\fint_{\Omega_{2r}}|\nabla_nu-\theta|+|\nabla_{x'}u|\Bigg)+C_2\fint_{\Omega_{2r}}|\nabla_nu\nabla_{x'}\chi|\nonumber\\
    &\leq C_2 \overline{\phi}(0,2r)+C_2\varrho_1(2r)\fint_{\Omega_{2r}}|\nabla u|,
\end{align*}
where $C_2$ is a non-negative constant depending only on $n$. Using \eqref{chi <1/2} and \eqref{c1<Omega<c2}, combining the last two inequalities and \eqref{bou pro 1} yields
\begin{equation}\label{overline phi(0,var r/2)}
    \overline{\phi}(0,\varepsilon r/2)\leq C\varepsilon^{\alpha}\overline{\phi}(0,2r)+C\varepsilon^{-n}\Bigg(\frac{|\mu|(\Omega_{4r})}{r^{n-1}}\Bigg)^{\frac{1}{p-1}}+C\varepsilon^{-n}(\varrho_1(2r))^{\frac{2}{p}}\fint_{\Omega_{4r}}\big(|\nabla u|+s\big)\ dx.
\end{equation}

When $3/2\leq p<2$, $\gamma_0=2-p$, using \eqref{chi <1/2}, \eqref{c1<Omega<c2}, and \eqref{bou pro 2} yields 
\begin{align}\label{overline psi(0,var r/2)P>3/2}
  \overline{\psi}(0,\varepsilon r/2)&\leq C\varepsilon^{\alpha} \overline{\psi}(0,2r)+C\varepsilon^{-n/\gamma_0}\Bigg(\frac{|\mu|(\Omega_{4r})}{r^{n-1}}\Bigg)^{\frac{1}{p-1}} \nonumber \\
  &\quad+C\varepsilon^{-n/\gamma_0}\varrho_1(2r)\Bigg(\fint_{\Omega_{4r}}\big(|\nabla u|+s\big)^{2-p}\ dx\Bigg)^{\frac{1}{2-p}}\nonumber\\
  &\quad+C\varepsilon^{-n/\gamma_0}\frac{|\mu|(\Omega_{4r})}{r^{n-1}}\fint_{\Omega_{4r}}\big(|\nabla u|+s\big)^{2-p}\ dx.
\end{align}

When $1<p<3/2$, $\gamma_0=(p-1)^2/2$, using \eqref{chi <1/2}, \eqref{c1<Omega<c2}, and \eqref{bou pro 3} gives 
\begin{align}\label{overline psi(0,var r/2)p<3/2}
    \overline{\psi}(0,\varepsilon r/2)&\leq C\varepsilon^{\alpha}\psi(0,2r)+C\varepsilon^{-n/\gamma_0}\Bigg(\frac{|
    \mu|(\Omega_{4r})}{r^{n-1}}\Bigg)^{\frac{1}{p-1}}\nonumber\\  
    &\quad +C\varepsilon^{-n/\gamma_0}\varrho_1(2r)\Bigg(\fint_{\Omega_{4r}}\big(|\nabla u|+s\big)^{\gamma_0}\ dx\Bigg)^{1/\gamma_0}\nonumber\\
    &\quad +C\varepsilon^{-n/\gamma_0}\frac{|\mu|(\Omega_{4r})}{r^{n-1}}\Bigg(\fint_{\Omega_{4 r}}\big(|\nabla u|+s\big)^{\gamma_0}\ d x\Bigg)^{(2-p)/\gamma_0}.
\end{align}
By replacing $\varepsilon/4$ and $2r$ with $\varepsilon$ and $r$ for \eqref{overline phi(0,var r/2)}, \eqref{overline psi(0,var r/2)P>3/2}, and \eqref{overline psi(0,var r/2)p<3/2}, respectively, we finish the proof of Corollary \ref{cor bou}.
\end{proof}

\subsection{Global pointwise gradient estimates}
Next as in Subsection \ref{sub-auxiliary-est}, we  define
\begin{equation*}
    \phi(x,\rho)=\inf_{{\bf q}\in\mathbb R^n}\fint_{\Omega_{\rho}(x)}|\nabla u-{\bf q}|\quad \text{and}\quad\psi(x,\rho)=\inf_{{\bf q}\in \mathbb R^n}\Bigg(\fint_{\Omega_{\rho}(x)}|\nabla u-q|^{\gamma_0}\Bigg)^{1/\gamma_0},
\end{equation*}
where $0<\gamma_0<1$,
 and choose ${\bf q}_{x,r}\in\mathbb R^n$ and ${\bf q}_{x,r;\gamma_0}\in \mathbb R^n$ such that
\begin{equation*}
    \fint_{\Omega_r(x)}|\nabla u-{\bf q}_{x,r}|=\inf_{{\bf q}\in \mathbb R^n}\fint_{\Omega_r(x)}|\nabla u-{\bf q}|
\end{equation*}
and 
\begin{equation*}
    \Bigg(\fint_{\Omega_r(x)}|\nabla u-{\bf q}_{x,r}|^{\gamma_0}\Bigg)^{1/\gamma_0}=\inf_{{\bf q}\in\mathbb R^n}\Bigg(\fint_{\Omega_r(x)}|\nabla u-{\bf q}|^{\gamma_0}\Bigg)^{1/\gamma_0}.
\end{equation*}

We then prove the global pointwise gradient estimate of $\nabla u$.
For this, we let $\varepsilon=\varepsilon(n, p, \lambda, \alpha)\in(0,1/4)$ be sufficiently small such that $C\varepsilon^{\alpha}\leq 1/4 $, where $C$ is the constant in Corollary \ref{cor bou} and $\alpha\in(0,1)$ is from Theorem \ref{thm v-BMO}.

 Fix a point $x_0\in\partial\Omega$ and $R\leq R'/2$. For any non-negative integer $j$, set $r_j=\varepsilon^jR$, $\Omega_j=\Omega_{2r_j}(x_0)$,
\begin{equation*}
    S_j=\fint_{\Omega_j}\big(|\nabla u|+s\big)\ dx,\quad \phi_j=\phi(x_0,r_j),\quad and \quad\overline{\phi}_j=\overline{\phi}(x_0,r_j).
\end{equation*}
When $p\geq 2$, using \eqref{bou cor p>2}, we get
\begin{equation*}
    \overline{\phi}_{j+1}\leq \frac{1}{4}\overline{\phi}_j+C\Bigg(\frac{|\mu|(\Omega_j)}{r^{n-1}_j}\Bigg)^{\frac{1}{p-1}}+C(\varrho_1(r))^{\frac{2}{p}}S_j.
\end{equation*}
Note that $\phi_j\leq\overline{\phi}_j\leq CS_j$. Let $j_0$ and $d$ be nonnegative integers and $j_0\leq d$, and sum the above inequality with respect to $j=j_0, j_0+1, ... , d$, yields
\begin{equation}\label{bou sum phi}
    \sum_{j=j_0}^{d+1}\phi_j\leq\sum_{j=j_0}^{d+1}\overline{\phi}_j\leq CS_{j_0}+C\sum_{j=j_0}^d\Bigg(\frac{|\mu|(\Omega_j)}{r_j^{n-1}}\Bigg)^{\frac{1}{p-1}}+C\sum_{j=j_0}^d(\varrho_1(r_j))^{\frac{2}{p}}S_j,
\end{equation}
for any $x_0\in\partial\Omega$ and $R\leq R'/2$. On the other hand, for any $x_0\in\Omega$ and $R>0$ such that $r_{j_0}=\varepsilon^{j_0}R<\text{dist}(x_0,\partial\Omega)/2$, using \eqref{d+1 phi j}, the following holds
\begin{equation}\label{sum phi bou in}
    \sum_{j=j_0}^{d+1}\phi_j\leq C\phi_{j_0}+C\sum_{j=j_0}^d\Bigg(\frac{|\mu|(\Omega_j)}{r_j^{n-1}}\Bigg)^{\frac{1}{p-1}}+C\sum_{j=j_0}^d(\varrho(r_j))^{\frac{2}{p}}S_j.
\end{equation}

When $3/2\leq p<2$, we take $\gamma_0=2-p$  and define 
\begin{equation*}
    T_j=\Bigg(\fint_{\Omega_j}\big(|\nabla u|+s\big)^{2-p}\Bigg)^{\frac{1}{2-p}}, \quad \psi_j=\psi(x_0,r_j), \quad and \quad \overline{\psi}_j=\overline{\psi}(x_0,r_j).
\end{equation*}
Using \eqref{bou cor 3/2<p}, for any $x_0\in\partial\Omega$ and $R\leq R'/2$, we get
\begin{align}\label{bou sum psi p>3/2}
    \sum_{j=j_0}^{d+1}\psi_j\leq C\sum_{j=j_0}^{d+1}\overline{\psi}_j&\leq CT_{j_0}+C\sum_{j=j_0}^d\Bigg(\frac{|\mu|(\Omega_j)}{r_j^{n-1}}\Bigg)^{\frac{1}{p-1}}\nonumber\\
    &\quad +C\sum_{j=j_0}^d\frac{|\mu|(\Omega_j)}{r_j^{n-1}}T_j^{2-p}+C\sum_{j=j_0}^d\varrho_1(r_j)T_j.
\end{align}
Moreover, for any $x_0\in\Omega$ and $R>0$ such that $r_{j_0}=\varepsilon^{j_0}R<\text{dist}(x_0,\partial\Omega)/2$, we obtain
\begin{align}\label{sum psi bou in p>3/2}
    \sum_{j=j_0}^{d+1}\psi_j\leq C\psi_{j_0}+C\sum_{j=j_0}^d\frac{|\mu|(\Omega_j)}{r^{n-1}_j}\Bigg)^{\frac{1}{p-1}}+C\sum_{j=j_0}^d\frac{|\mu|(\Omega_j)}{r_j^{n-1}}T_j^{2-p}+C\sum_{j=j_0}^d\varrho(r_j)T_j.
\end{align}

When $1<p<3/2$, we take $\gamma_0=(p-1)^2/2$ and set 
\begin{equation*}
    U_j=\Bigg(\fint_{\Omega_j}\big(|\nabla u|+s\big)^{(p-1)^2/2}\Bigg)^{2/(p-1)^2},\quad \psi_j=\psi(x_0,r_j), \quad and \quad\overline{\psi}_j=\overline{\psi}(x_0,r_j).
\end{equation*}
Similarly, we obtain that for any $x_0\in\partial\Omega$ and $R\leq R'/2$,
\begin{align}\label{bou sum psi p<3/2}
    \sum_{j=j_0}^{d+1}\psi_j\leq C\sum_{j=j_0}^{d+1}\overline{\psi}_j&\leq CU_{j_0}+C\sum_{j=j_0}^d\Bigg(\frac{|\mu|(\Omega_j)}{r_j^{n-1}}\Bigg)^{\frac{1}{p-1}}\nonumber\\
    &\quad +C\sum_{j=j_0}^d\frac{|\mu|(\Omega_j)}{r_j^{n-1}}U_j^{2-p}+C\sum_{j=j_0}^d\varrho_1(r_j)U_j
\end{align}
and 
\begin{align}\label{sum psi bou in p<3/2}
    \sum_{j=j_0}^{d+1}\psi_j\leq C\psi_{j_0}+C\sum_{j=j_0}^d\frac{|\mu|(\Omega_j)}{r^{n-1}_j}\Bigg)^{\frac{1}{p-1}}+C\sum_{j=j_0}^d\frac{|\mu|(\Omega_j)}{r_j^{n-1}}U_j^{2-p}+C\sum_{j=j_0}^d\varrho(r_j)U_j
\end{align}
for any $x_0\in\Omega$ and $R>0$ such that $r_{j_0}=\varepsilon^{j_0}R<\text{dist}(x_0,\partial\Omega)/2$ holds.

Now we define
\begin{align*}
    &\Omega'_j=\Omega_{8r_j}(x_0),\quad S'_j=\fint_{\Omega'_j}\big(|\nabla u|+s\big)\ dx,\quad
    T_j'=\Bigg(\fint_{\Omega'_j}\big(|\nabla u|+s\big)^{2-p}\Bigg)^{\frac{1}{2-p}},\\\quad &and\quad U_j'=\Bigg(\fint_{\Omega_j'}\big(|\nabla u|+s\big)^{\frac{(p-1)^2}{2}}\Bigg)^{\frac{2}{(p-1)^2}}.
\end{align*}
Thus we have the following lemma.
\begin{lemma}\label{lemma global sum phi+psi}
    Let $u\in W_0^{1,p}(\Omega)$ be a solution of \eqref{bou u=0}. For any fixed $x_0\in\bar{\Omega}$ and $R\leq R'/2$, there exists some positive constant $C$ depending on $n$, $p$, and $\lambda$ such that we have the assertions as follows:

    (i) when $p\geq 2$, then
    \begin{equation}\label{bou P>2 sum phi j}
        \sum_{j=j_0}^{d+1}\phi_j\leq CS'_{j_0}+C\sum_{j=j_0}^d\Bigg(\frac{|\mu|(\Omega'_j)}{r_j^{n-1}}\Bigg)^{\frac{1}{p-1}}+C\sum_{j=j_0}^{d+1}(\varrho_1(r_j))^{\frac{2}{p}}S'_j;
    \end{equation}

    (ii) when $3/2\leq p<2$, then 
    \begin{align}\label{bou p>3/2 sum psi j}
        \sum_{j=j_0}^{d+1}\psi_j\leq CT'_{j_0}+C\sum_{j=j_0}^d\Bigg(\frac{|\mu|(\Omega'_j)}{r_j^{n-1}}\Bigg)^{\frac{1}{p-1}}+C\sum_{j=j_0}^d\frac{|\mu|(\Omega'_j)}{r_j^{n-1}} {T'_j}^{2-p}+C\sum_{j=j_0}^{d+1}\varrho_1(r_1)T'_j;
    \end{align}

    (iii) when $1<p<3/2$, then
    \begin{equation}\label{bou p<3/2 sum psi j}
        \sum_{j=j_0}^{d+1}\psi_j\leq CU'_{j_0}+C\sum_{j=j_0}^d\Bigg(\frac{|\mu|(\Omega'_j)}{r_j^{n-1}}\Bigg)^{\frac{1}{p-1}}+C\sum_{j=j_0}^d\frac{|\mu|(\Omega'_j)}{r_j^{n-1}} {U'_j}^{2-p}+C\sum_{j=j_0}^{d+1}\varrho_1(r_1)U'_j.
    \end{equation}
\end{lemma}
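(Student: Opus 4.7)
The strategy is to unify the boundary-point sum estimates \eqref{bou sum phi}, \eqref{bou sum psi p>3/2}, \eqref{bou sum psi p<3/2} with the interior sum estimates \eqref{sum phi bou in}, \eqref{sum psi bou in p>3/2}, \eqref{sum psi bou in p<3/2} into a single statement valid for every $x_0\in\overline{\Omega}$. The enlargement factor $4$ built into $\Omega'_j=\Omega_{8r_j}(x_0)$ (compared with $\Omega_j=\Omega_{2r_j}(x_0)$) is precisely the slack needed to switch between the two viewpoints. Since $\varrho(r)\leq\varrho_1(r)=\varrho(r)+\varrho_0(r)$, the coefficient oscillation in the interior estimates can freely be replaced by $\varrho_1$.

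Set $d_0:=\text{dist}(x_0,\partial\Omega)$. When $x_0\in\partial\Omega$, the bounds follow immediately from \eqref{bou sum phi}, \eqref{bou sum psi p>3/2}, \eqref{bou sum psi p<3/2}, since $\Omega_j\subset\Omega'_j$ and \eqref{c1<Omega<c2} yields $|\Omega'_j|\leq C|\Omega_j|$, giving $S_j\leq CS'_j$, $T_j\leq CT'_j$, $U_j\leq CU'_j$, and $|\mu|(\Omega_j)\leq|\mu|(\Omega'_j)$. When $x_0\in\Omega$ and $r_{j_0}<d_0/2$, the balls $B_{2r_j}(x_0)$ for $j\geq j_0$ all lie inside $\Omega$, so the interior estimates apply; the leading term $\phi_{j_0}$ (resp.\ $\psi_{j_0}$) is bounded trivially by $CS'_{j_0}$ (resp.\ $CT'_{j_0}$, $CU'_{j_0}$) by taking ${\bf q}=0$ in the infimum and then using the inclusion $\Omega_{r_{j_0}}(x_0)\subset\Omega'_{j_0}$ together with \eqref{c1<Omega<c2}.

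The nontrivial case is $x_0\in\Omega$ with $r_{j_0}\geq d_0/2$. Fix $\bar{x}_0\in\partial\Omega$ realizing $|x_0-\bar{x}_0|=d_0$, and let $k$ be the integer satisfying $r_{k+1}<d_0/2\leq r_k$ (if no such $k$ lies in $[j_0,d]$, only one of the two regimes below is present). Split $\sum_{j=j_0}^{d+1}=\sum_{j=j_0}^{k}+\sum_{j=k+1}^{d+1}$. For the large-scale piece ($j\leq k$, so $d_0\leq 2r_j$), one has the inclusions $\Omega_{r_j}(x_0)\subset\Omega_{3r_j}(\bar{x}_0)$ and $\Omega_{2r_j}(\bar{x}_0)\subset\Omega_{4r_j}(x_0)\subset\Omega'_j$; combined with \eqref{c1<Omega<c2} these yield $\phi(x_0,r_j)\leq C\overline{\phi}(\bar{x}_0,3r_j)$, the comparison $\fint_{\Omega_{2r_j}(\bar{x}_0)}(|\nabla u|+s)\leq CS'_j$, and $|\mu|(\Omega_{2r_j}(\bar{x}_0))\leq|\mu|(\Omega'_j)$, with analogous bounds for $\psi,T,U$. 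Applying the boundary estimates \eqref{bou sum phi}--\eqref{bou sum psi p<3/2} at $\bar{x}_0$ to the truncated sum (with a harmless constant shift of scale) controls this piece by the right-hand side of \eqref{bou P>2 sum phi j}--\eqref{bou p<3/2 sum psi j}. For the small-scale piece ($j\geq k+1$, where $B_{2r_j}(x_0)\subset\Omega$), the interior estimate applies from scale $r_{k+1}$, and the induced leading term $\phi_{k+1}$ (resp.\ $\psi_{k+1}$) is in turn absorbed into the large-scale output via the same inclusions at scale $k$.

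The main technical difficulty will be the bookkeeping in the splitting: matching the indices produced by invoking the boundary iteration at $\bar{x}_0$ with those of the interior piece, and verifying uniformly in $j$ that the measure ratios $|\Omega_{cr_j}(\bar{x}_0)|/|\Omega_{r_j}(x_0)|$ and $|\Omega'_j|/|\Omega_{2r_j}(\bar{x}_0)|$ stay bounded via \eqref{c1<Omega<c2}. Once these comparisons are in place, every term on the right-hand side of the two partial sums is absorbed either into $S'_{j_0}$ (resp.\ $T'_{j_0}$, $U'_{j_0}$), into the Wolff-/Riesz-type potential sum, or into the summable Dini series involving $\varrho_1$, producing the claimed global bounds \eqref{bou P>2 sum phi j}--\eqref{bou p<3/2 sum psi j}.
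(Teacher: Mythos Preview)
Your proposal is correct and follows essentially the same approach as the paper: for $x_0\in\partial\Omega$ the boundary sum estimates apply directly after passing from $\Omega_j$ to $\Omega'_j$; for $x_0\in\Omega$ with $r_{j_0}<d_0/2$ the interior estimates suffice; and in the mixed case one splits the sum at the index where $r_j$ crosses $d_0/2$, applies the boundary iteration at the nearest boundary point $\bar{x}_0$ for the large scales (using the inclusions $\Omega_{r_j}(x_0)\subset\Omega_{3r_j}(\bar{x}_0)$ and $\Omega_{6r_j}(\bar{x}_0)\subset\Omega'_j$), applies the interior iteration for the small scales, and absorbs the interior leading term $\phi_{k+1}\leq C\phi_k$ into the boundary output. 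The only cosmetic difference is that the paper uses the inclusion $\Omega_{6r_j}(\bar{x}_0)\subset\Omega_{8r_j}(x_0)$ (since the boundary sum at radii $3r_j$ produces averages over $\Omega_{6r_j}(\bar{x}_0)$), whereas you wrote $\Omega_{2r_j}(\bar{x}_0)\subset\Omega_{4r_j}(x_0)$; your parenthetical ``harmless constant shift of scale'' covers exactly this adjustment.
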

\begin{proof}
    For any fixed $x_0\in \partial\Omega$, since $\Omega_j\subset\Omega'_j$, we have $S_j\leq CS'_j$. Then it follows from \eqref{bou sum phi}, \eqref{bou sum psi p>3/2}, and \eqref{bou sum psi p<3/2} that \eqref{bou P>2 sum phi j}-\eqref{bou p<3/2 sum psi j} hold. For $x_0\in\Omega$, combining \eqref{sum phi bou in}, \eqref{sum psi bou in p>3/2}, and \eqref{sum psi bou in p<3/2}, we conclude that it suffices to prove \eqref{bou P>2 sum phi j}-\eqref{bou p<3/2 sum psi j} when $r_{j_0}\geq \text{dist}(x_0,\partial\Omega)/2$. Let $r_{j_1}\geq\text{dist}(x_0,\partial\Omega)/2$ and $r_{j_1+1}<\text{dist}(x_0,\partial\Omega)/2$, by \eqref{sum phi bou in}, we have 
    \begin{equation}\label{bou phi j sum j1+1  d+1}
        \sum_{j=j_1+1}^{d+1}\phi_j<C\phi_{j_1+1}+C\sum_{j=j_1+1}^d\Bigg(\frac{|\mu|(\Omega_j)}{r_j^{n-1}}\Bigg)^{\frac{1}{p-1}}+C\sum_{j=j_1+1}^d(\varrho(r_j))^{\frac{2}{p}}S_j.
    \end{equation}
    According to \eqref{c1<Omega<c2}, we also have
    \begin{equation}\label{bou phi j1+1}
        \phi_{j_1+1}\leq \fint_{\Omega_{r_{j_1+1}}(x_0)}|\nabla u-{\bf q}_{x_0,r_{j_1}}|\leq C\phi_{j_1}.
    \end{equation}
    
    For any $j\in$\{$j_0, j_0+1,...,j_1$\}, $r_j\geq \text{dist}(x_0,\partial\Omega)/2$, choose $y_0\in\partial\Omega$ such that $\text{dist}(x_0, \partial\Omega)=|y_0-x_0|$. Thus $\Omega_{r_j}(x_0)\subset\Omega_{3r_j}(y_0)$ and $\Omega_{6r_j}(y_0)\subset\Omega_{8r_j}(x_0)$. For simplicity, we define
    \begin{equation*}
        S''_j:=\fint_{\Omega_{6r_j}(y_0)}\big(|\nabla u|+s\big)\ dx.
    \end{equation*}
    Using  \eqref{bou sum phi} yields
    \begin{align}\label{bou sum phij j0 j1}
        \sum_{j=j_0}^{j_1}\phi_j&\leq C\sum_{j=j_0}^{j_1}\phi(y_0,3r_j)\nonumber\\
        &\leq CS''_{j_0}+C\sum_{j=j_0}^{j_1}\Bigg(\frac{|\mu|(\Omega_{6r_j}(y_0))}{r_j^{n-1}}\Bigg)^{\frac{1}{p-1}}+C\sum_{j=j_0}^{j_1+1}(\varrho_1(3r_j))^{\frac{2}{p}}S''_j\nonumber\\
        &\leq CS'_{j_0}+C\sum_{j=j_0}^{j_1}\Bigg(\frac{|\mu|(\Omega'_j)}{r_j^{n-1}}\Bigg)^{\frac{1}{p-1}}+C\sum_{j=j_0}^{j_1+1}(\varrho_1(r_j))^{\frac{2}{p}}S'_j,
    \end{align}
     Combining \eqref{bou phi j sum j1+1  d+1}, \eqref{bou phi j1+1}, and \eqref{bou sum phij j0 j1}, we obtain \eqref{bou P>2 sum phi j}.
    
   Similarly, by the same argument as above, we obtain \eqref{bou p>3/2 sum psi j} and \eqref{bou p<3/2 sum psi j}. Lemma \ref{lemma global sum phi+psi} is proved.
\end{proof}
\begin{proof}[Proof of Theorem \ref{bou pointwise estiamte}]
    In view of the same ideas as in the proof of Theorem \ref{main result Pwg}, and using Lemma \ref{lemma global sum phi+psi}, we complete the proof of Theorem \ref{bou pointwise estiamte}. We thus omit the details.
\end{proof}
\begin{proof}[Proof of the Corollary \ref{Omega bounded}.]
By using $u$ as a test function for $\eqref{bou u=0}$ and imitating the proof of \eqref{inequ Dum}, we obtain
    \begin{equation*}
        \|\nabla u\|_{L^p(\Omega)}\leq C\|\mu\|_{W^{-1,\frac{p}{p-1}}(\mathbb R^n)}^{\frac{1}{p-1}}.
    \end{equation*}
Then combining with H\"{o}lder inequality, we have
\begin{equation}\label{L1-Lp}
\|\nabla u\|_{L^1(\Omega)}\leq C\|\nabla u\|_{L^p(\Omega)}\leq C\|\mu\|_{W^{-1,\frac{p}{p-1}}(\mathbb R^n)}^{\frac{1}{p-1}}.
\end{equation}
It follows from \cite[Theorem 1]{MR1983} and Young's inequality that
    \begin{equation*}
        \|\mu\|_{W^{-1,\frac{p}{p-1}}(\mathbb R^n)}\leq C\Bigg(\int_{\mathbb R^n}{\bf W}_{\frac{1}{p},p}^1(|\mu|)\ d|\mu|\Bigg)^{\frac{p-1}{p}}\leq C\Bigg(\int_{\mathbb R^n}{\bf W}_{\frac{1}{p},p}^1(|\mu|)\ d|\mu|\Bigg)^{p-1}+C.
    \end{equation*}
 This together with \eqref{L1-Lp} and \eqref{bou p>2 point} yields 
\begin{equation}\label{Du-1}
        \|\nabla u\|_{L^{\infty}(\Omega)}\leq C\|{\bf W}_{1/p,p}^1(|\mu|)\|_{L^{\infty}(\Omega)}+Cs+C.
    \end{equation}
Denote $v:=\lambda^{-1}u$, where $\lambda>0$ is a constant determined later. Then $v$ satisfies 
\begin{equation*}
-\Div\big(a(x)(|\nabla v|^2+(\lambda^{-1}s)^2)^{\frac{p-2}{2}}\nabla v\big)=\lambda^{1-p}\mu\quad\mbox{in}~\Omega,
\end{equation*}
and from \eqref{Du-1}, we have
\begin{equation*}
        \|\nabla v\|_{L^{\infty}(\Omega)}\leq C\lambda^{-1}\|{\bf W}_{1/p,p}^1(|\mu|)\|_{L^{\infty}(\Omega)}+C\lambda^{-1}s+C.
    \end{equation*}
This gives
\begin{equation*}
        \|\nabla u\|_{L^{\infty}(\Omega)}\leq C\|{\bf W}_{1/p,p}^1(|\mu|)\|_{L^{\infty}(\Omega)}+Cs+C\lambda.
    \end{equation*}
By choosing $\lambda=\|{\bf W}_{1/p,p}^1(|\mu|)\|_{L^{\infty}(\Omega)}$, we obtain \eqref{Linftyp2}. Similarly, by using Theorem \ref{bou pointwise estiamte} and \eqref{ine-1p2}, we obtain \eqref{Linfty1p2} and finish the proof of Corollary \ref{Omega bounded}.
\end{proof}

\subsection{Global gradient modulus of continuity estimates}
Let $x_0\in\bar{\Omega}$ and $0<R<R'$. For any fixed $\alpha_1\in(0,\alpha)$, take $\alpha_2=(\alpha+\alpha_1)/2$ and choose $\varepsilon=\varepsilon(n, p, \lambda, \alpha, \alpha_1)\in(0,1/4)$ sufficiently small such that $\varepsilon^{\alpha_2}<1/4$ and $C\varepsilon^{\alpha-\alpha_2}<1$ for both constants $C$ in Proposition \ref{prop-phi-Du} and Corollary \ref{cor bou}.

We now define
\begin{equation*}
    h_1(x,r)=\Bigg(\frac{|\mu|(B_r(x)\cap B_{R/2}(x_0))}{r^{n-1}}\Bigg)^{\frac{1}{p-1}},\quad g_1(x,r)=h_1(x,r)^{p-1},
\end{equation*}
and
\begin{align*}
&\hat{h}_1(x,t)=\sum_{i=1}^{\infty}\varepsilon^{\alpha_2i}h_1(x,\varepsilon^{-i}t),\quad \hat{g}_1(x,t)=\sum_{i=1}^{\infty}\varepsilon^{\alpha_2i}g_1(x,\varepsilon^{-i}t),\\
&\overline{h}_1(x,t)=\sum_{i=1}^{\infty}\varepsilon^{\alpha_1i}h_1(x,\varepsilon^{-i}t),\quad \overline{g}_1(x,t)=\sum_{i=1}^{\infty}\varepsilon^{\alpha_1i}g_1(x,\varepsilon^{-i}t),\\
&\hat{\varrho}_1(t)=\sum_{i=1}^{\infty}\varepsilon^{\alpha_2i}\big(\varrho_1(\varepsilon^{-i}t)^{\frac{2}{p}}[\varepsilon^{-i}t\leq R/2]+\varrho_1(R/2)^{\frac{2}{p}}[\varepsilon^{-i}t>R/2]\big),\\
&\check{\varrho}_1(t)=\sum_{i=1}^{\infty}\varepsilon^{\alpha_2i}\big(\varrho_1(\varepsilon^{-i}t)[\varepsilon^{-i}t\leq R/2]+\varrho_1(R/2)[\varepsilon^{-i}t>R/2]\big),\\
&\tilde{\varrho}_1(t)=\sum_{i=1}^{\infty}\varepsilon^{\alpha_1i}\big(\varrho_1(\varepsilon^{-i}t)^{\frac{2}{p}}[\varepsilon^{-i}t\leq R/2]+\varrho_1(R/2)^{\frac{2}{p}}[\varepsilon^{-i}t>R/2]\big),\\
&\overline{\varrho}_1(t)=\sum_{i=1}^{\infty}\varepsilon^{\alpha_1i}\big(\varrho_1(\varepsilon^{-i}t)[\varepsilon^{-i}t\leq R/2]+\varrho_1(R/2)[\varepsilon^{-i}t>R/2]\big),
\end{align*}
where we used the Iverson bracket notation, i.e., $[P]=1$ if $P$ is true and $[P]=0$ otherwise.

By Corollary \ref{cor bou} and replicating the arguments as in the proof of Lemma \ref{lem-phi-w}, for any $x\in\partial\Omega$, $B_{2r}(x)\subset B_{R/2}$ and $0<\rho\leq r$, we have
\begin{align}\label{bou overline phi rho}
    \overline{\phi}(x,\rho)\leq C\big(\frac{\rho}{r}\big)^{\alpha_2}\overline{\phi}(x,r)+C\hat{h}_1(x,2\rho)+C\hat{\varrho}_1(\rho)\big(\|\nabla u\|_{L^{\infty}(\Omega_{2r}(x))}+s\big),\quad if \ p\geq 2;
\end{align}
and
\begin{align}\label{bou overline psi rho}
    \overline{\psi}(x,\rho)&\leq C\big(\frac{\rho}{r}\big)^{\alpha_2}\overline{\psi}(x,r)+C\hat{h}_1(x,2\rho)+C\hat{g}_1(x,2\rho)\big(\|\nabla u\|_{L^{\infty}(\Omega_{2r}(x))}+s\big)^{2-p}\nonumber\\
    &\quad +C\check{\varrho}_1(\rho)\big(\|\nabla u\|_{L^{\infty}(\Omega_{2r}(x))}+s\big),\quad \mbox{if} \ 1<p<2.
\end{align}
Similarly, for any $B_{2r}(x)\subset\subset\Omega$, $B_{2r}(x)\subset B_{R/2}(x_0)$ with $\rho\in(0,r]$, since $\varrho\leq \varrho_1$, by \eqref{iteration-u}, we have
\begin{align}\label{in phi rho}
    \phi(x,\rho)\leq C\big(\frac{\rho}{r}\big)^{\alpha_2}\phi(x,r)+C\hat{h}_1(x,2\rho)+C\hat{\varrho}_1(\rho)\big(\|\nabla u\|_{L^{\infty}(\Omega_{2r}(x))}+s\big)\quad \mbox{if} \ p\geq 2;
\end{align}
and
\begin{align}\label{in psi rho}
    \psi(x,\rho)&\leq C\big(\frac{\rho}{r}\big)^{\alpha_2}\psi(x,r)+C\hat{h}_1(x,2\rho)+C\hat{g}_1(x,2\rho)\big(\|\nabla u\|_{L^{\infty}(\Omega_{2r}(x))}+s\big)^{2-p}\nonumber\\
    &\quad +C\breve{\varrho}_1(\rho)\big(\|\nabla u\|_{L^{\infty}(\Omega_{2r}(x))}+s\big),\quad \mbox{if}\ 1<p<2.
\end{align}
Combining \eqref{bou overline phi rho}--\eqref{in psi rho},  we have the following lemma.
\begin{lemma}\label{Lem in+bou sum phi/psi}
    Assuming $x\in\bar{\Omega}$ and $B_{2r}(x)\subset B_{R/2}(x_0)$, there exists a constant $C$ such that for any $<\rho\leq r\leq R'$, the following assertion hold:
    
    (i) when $p\geq 2$, then
    \begin{align}\label{in+bou phi}
        \phi(x,\rho)\leq C\big(\frac{\rho}{r}\big)^{\alpha_2}r^{-n}\|\nabla u\|_{L^1(\Omega_r(x))}+C\overline{h}_1(x,\rho)+C\tilde\varrho_1(\rho)\big(\|\nabla u\|_{L^{\infty}(\Omega
        _{2r}(x))}+s\big)
    \end{align}
    and
    \begin{align}\label{in+bou sum phi}
      \sum_{j=0}^{\infty}\phi(x,\varepsilon^j\rho)&\leq C\big(\frac{\rho}{r}\big)^{\alpha_2}r^{-n}\|\nabla u\|_{L^{1}(\Omega_r(x))}+C\int_0^{\rho}\frac{\overline{h}_1(x,t)}{t} \ dt\nonumber\\
      &\quad +C\big(\|\nabla u\|_{L^{\infty}(\Omega_{2 r}(x))}+s\big)\int_0^{\rho}\frac{\tilde{\varrho}_1(t)}{t}\ dt,  
    \end{align}
    where $C$ depends $n$, $p$, $\varepsilon$, $\lambda$, and $\alpha_1$;

    (ii) when $1<p<2$ and $0<\gamma_0<1$, then
    \begin{align*}
        \psi(x,\rho)&\leq C\big(\frac{\rho}{r}\big)^{\alpha_2}r^{-n/\gamma_0}\|\nabla u\|_{L^{\gamma_0}(\Omega_r(x))}+C\overline{h}_1(x,\rho)\nonumber\\
        &\quad+C\overline{g}_1(x,\rho)\big(\|\nabla u\|_{L^{\infty}(\Omega_{2r}(x))}+s\big)
        +C\tilde\varrho_1(\rho)\big(\|\nabla u\|_{L^{\infty}(\Omega
        _{2r}(x))}+s\big)
    \end{align*}
    and
    \begin{align*}
        \sum_{j=0}^{\infty}\psi(x,\varepsilon^j\rho)&\leq C\big(\frac{\rho}{r}\big)^{\alpha_2}r^{-n/\gamma_0}\|\nabla u\|_{L^{\gamma_0}(\Omega_r(x))}+C\int_0^{\rho}\frac{\overline{h}_1(x,t)}{t}\ dt\nonumber\\
        &\quad +C\big(\|\nabla u\|_{L^{\infty}(\Omega_{2r}(x))}+s\big)^{2-p}\int_0^{\rho}\frac{\overline{g}_1(x,t)}{t}\ dt\nonumber\\
        &\quad+C\big(\|\nabla u\|_{L^{\infty}(\Omega_{2r}(x))}+s\big)\int_0^{\rho}\frac{\overline{\varrho}_1(t)}{t}\ dt,
    \end{align*}
    where $C$ depends $n$, $p$, $\varepsilon$, $\lambda$, $\alpha_1$ and $\gamma_0$.
\end{lemma}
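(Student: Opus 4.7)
The plan is to iterate the one-step estimates \eqref{bou overline phi rho}--\eqref{in psi rho} in the spirit of Lemma \ref{lem-phi-w}, while coordinating the interior and boundary regimes across scales as in the proof of Lemma \ref{lemma global sum phi+psi}. I focus on the case $p\geq 2$ and on \eqref{in+bou phi}; the other three inequalities are entirely parallel. The key observation is that for any $x\in\bar\Omega$ and any dyadic scale $r_j=\varepsilon^j r$ with $\rho\le r_j\le r$, we may contract $\phi(x,\cdot)$ by $\varepsilon^{\alpha_2}$ in one step: if $r_j<\mathrm{dist}(x,\partial\Omega)/2$, apply the interior bound \eqref{in phi rho} directly; otherwise pick $y\in\partial\Omega$ with $|y-x|=\mathrm{dist}(x,\partial\Omega)$ and apply the boundary bound \eqref{bou overline phi rho} at $y$, using the inclusions $\Omega_{r_j}(x)\subset\Omega_{3r_j}(y)\subset\Omega_{6r_j}(y)\subset\Omega_{8r_j}(x)$ together with \eqref{c1<Omega<c2} to transfer $\phi(x,r_j)$ and $\overline\phi(y,3r_j)$ (and the corresponding averages of $|\nabla u|$) at the cost of a constant depending only on $n$. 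Because $C\varepsilon^{\alpha-\alpha_2}<1$ by our choice of $\varepsilon$, this hand-off absorbs the leading factor $C\varepsilon^{\alpha}$ into $\varepsilon^{\alpha_2}$ at every scale.

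Next, I would iterate this contraction from scale $r$ down to $\varepsilon^j r$. Following the computation in \eqref{phi-j}, we obtain
\begin{equation*}
\phi(x,\varepsilon^j r)\le \varepsilon^{\alpha_2 j}\phi(x,r)+C\sum_{i=1}^{j}\varepsilon^{\alpha_2(i-1)}\hat h_1(x,2\varepsilon^{j-i}r)+C\sum_{i=1}^{j}\varepsilon^{\alpha_2(i-1)}\hat\varrho_1(\varepsilon^{j-i}r)\bigl(\|\nabla u\|_{L^\infty(\Omega_{2r}(x))}+s\bigr).
\end{equation*}
Converting the resulting discrete sums (prefactor $\varepsilon^{\alpha_2 i}$) to the quantities $\overline h_1$ and $\tilde\varrho_1$ (prefactor $\varepsilon^{\alpha_1 i}$) uses only $\alpha_1<\alpha_2$ and the elementary inequality $\varepsilon^{\alpha_2 i}\le\varepsilon^{\alpha_1 i}$. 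For general $\rho\in(0,r]$, pick the integer $j$ with $\varepsilon^{j+1}<\rho/r\le\varepsilon^j$ and apply the display above with $\varepsilon^{-j}\rho$ in place of $r$; the initial bound $\phi(x,r)\le Cr^{-n}\|\nabla u\|_{L^1(\Omega_r(x))}$ then gives \eqref{in+bou phi}. For the summed version \eqref{in+bou sum phi}, replace $\rho$ by $\varepsilon^j\rho$ in \eqref{in+bou phi} and sum over $j\ge 0$, converting $\sum_j\overline h_1(x,\varepsilon^j\rho)$ and $\sum_j\tilde\varrho_1(\varepsilon^j\rho)$ into $\int_0^\rho\overline h_1(x,t)/t\,dt$ and $\int_0^\rho\tilde\varrho_1(t)/t\,dt$ via \cite[Lemma 2.7]{dk2017}, exactly as in the passage from \eqref{phi-j} to \eqref{iteration-sum}.

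For $1<p<2$, I would replace $\phi$ by $\psi$ and $\overline\phi$ by $\overline\psi$, and use \eqref{bou overline psi rho} and \eqref{in psi rho} in place of \eqref{bou overline phi rho} and \eqref{in phi rho}. The only structural novelty is the additional error term $C\hat g_1(x,2\rho)\bigl(\|\nabla u\|_{L^\infty(\Omega_{2r}(x))}+s\bigr)^{2-p}$ in the one-step estimate; this term iterates identically to the $\hat h_1$ term and produces the stated $\overline g_1$-sum after the same conversion. The initial bound is $\psi(x,r)\le Cr^{-n/\gamma_0}\|\nabla u\|_{L^{\gamma_0}(\Omega_r(x))}$.

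The main technical obstacle is the scale transition where $r_j$ crosses $\mathrm{dist}(x,\partial\Omega)/2$: a naive concatenation of interior and boundary iterations would pay a factor $\mathrm{dist}(x,\partial\Omega)^{-n}$ and destroy the uniformity in $x$. The fix is the doubling-ball comparison described above, which ensures that a single contraction constant controls the step no matter which regime we are in, so that the geometric decay $\varepsilon^{\alpha_2}$ survives across the transition. Once this is in place, the remainder of the argument is a direct adaptation of the interior iteration machinery already developed in Lemmas \ref{lem-phi-w} and \ref{lemma global sum phi+psi}.
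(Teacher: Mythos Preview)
Your scheme misreads \eqref{bou overline phi rho}--\eqref{in psi rho}: these are not one-step contractions but already the iterated decay estimates (with factor $(\rho/r)^{\alpha_2}$), obtained by running the argument of Lemma~\ref{lem-phi-w} on the genuine one-step bounds from Proposition~\ref{prop-phi-Du} and Corollary~\ref{cor bou}. The paper's proof uses exactly this: it never iterates scale-by-scale at $x$. Instead it splits into three cases according to where $d:=\mathrm{dist}(x,\partial\Omega)$ sits relative to $\rho$ and $r$. In the interesting case $4\rho<d<r/4$ it applies \eqref{in phi rho} \emph{once} (from $\rho$ up to $r_2:=d/4$), transfers a \emph{single} time to the nearest boundary point $z_0$ via $\phi(x,r_2)\le C\overline\phi(z_0,5r_2)$, applies \eqref{bou overline phi rho} \emph{once} (from $5r_2$ up to $r/2$), and closes with $\overline\phi(z_0,r/2)\le Cr^{-n}\|\nabla u\|_{L^1(\Omega_r(x))}$. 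The geometric transfer constant from \eqref{c1<Omega<c2} therefore appears a bounded number of times, and the passage from $\hat h_1,\hat\varrho_1$ to $\overline h_1,\tilde\varrho_1$ uses \cite[Lemma~5.8]{dz2024} and $\alpha_1<\alpha_2$.

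Your proposal, by contrast, attempts a uniform one-step bound $\phi(x,\varepsilon r_j)\le \varepsilon^{\alpha_2}\phi(x,r_j)+\text{error}$ at every boundary-regime scale by transferring to $y\in\partial\Omega$ and back. The back-transfer is where it breaks: at the transition scale where $\varepsilon r_j<d/2\le r_j$, the ball $\Omega_{\varepsilon r_j}(x)$ is neither contained in nor contains any $\Omega_{c\,\varepsilon r_j}(y)$ with a fixed $c$, since $|x-y|=d$ can be of order $r_j\gg\varepsilon r_j$. Even when the inclusion does hold, each round-trip multiplies by a constant depending on $n$ that is \emph{not} the constant $C$ in the hypothesis $C\varepsilon^{\alpha-\alpha_2}<1$ (that $C$ comes from Proposition~\ref{prop-phi-Du} and Corollary~\ref{cor bou}), so the accumulated factor is not absorbed. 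The fix is precisely the paper's single-transfer strategy; once you adopt it, the remainder of your outline (summation over $j$ and conversion to integrals via \cite[Lemma~2.7]{dk2017}) is correct.
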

\begin{proof}
    For simplicity, we assume that $x=0$ and denote 
    \begin{equation*}
        \phi(r):=\phi(0,r), \quad \hat{h}_1(r):=\hat{h}_1(0,r), \quad and \quad \overline{h}_1(r):=\overline{h}_1(0,r).
    \end{equation*}
    In the following, we shall prove the case of $p\geq 2$ as an example since $1<p<2$ is similar.
    
     When $r/16\leq \rho\leq r$,  by the definition of $\phi$, we get
    \begin{equation*}
        \phi(\rho)\leq C\big(\frac{\rho}{r}\big)^{\alpha_2}r^{-n}\|\nabla u\|_{L^1(\Omega_r)}.
    \end{equation*}

    When $0<\rho<r/16$, we proceed the proof according to the following three cases:

    {\bf Case (i):} $\text{dist}(0,\partial\Omega)\geq r/4$.  It follows from $B_{4r_1}\subset\Omega$ with $r_1=r/16$ and \eqref{in phi rho} that
    \begin{align*}
        \phi(\rho)&\leq C\big(\frac{\rho}{r_1}\big)^{\alpha_2}\phi(r_1)+C\hat{h}_1(2\rho)+C\hat{\varrho}_1(\rho)\big(\|\nabla u\|_{L^{\infty}(\Omega_{2r})}+s\big)\\
        &\leq C\big(\frac{\rho}{r}\big)^{\alpha_2}r^{-n}\|\nabla u\|_{L^1(\Omega_r)}+C\hat{h}_1(2\rho)+C\hat{\varrho}_1(\rho)\big(\|\nabla u\|_{L^{\infty}(\Omega_{2r})}+s\big)\\
        &\leq C\big(\frac{\rho}{r}\big)^{\alpha_2}r^{-n}\|\nabla u\|_{L^1(\Omega_r)}+C\overline{h}_1(2\rho)+C\tilde{\varrho}(\rho)\big(\|\nabla u\|_{L^{\infty}(\Omega_{2r})}+s\big),
    \end{align*}
    here the second inequality used this fact that
    \begin{equation*}
        \phi(r_1)\leq Cr^{-n}\|\nabla u\|_{L^1(\Omega_r)},
    \end{equation*}
    and the last inequality used that
    \begin{equation*}
        \hat{h}_1(x,t)\leq \overline{h}_1(x,t),\quad \hat{\varrho}_1(\rho)\leq \tilde{\varrho}_1(\rho).
    \end{equation*}
    Thus \eqref{in+bou phi} yields.

    {\bf Case (ii):} $4\rho<\text{dist}(0,\partial\Omega)<r/4$. Set $r_2=\text{dist}(0,\partial\Omega)/4>\rho$. By using \eqref{in phi rho}, we have
    \begin{align*}
        \phi(\rho)\leq C\big(\frac{\rho}{r_2}\big)^{\alpha_2}\phi(r_2)+C\hat{h}_1(2\rho)+C\hat{\varrho}(\rho)\big(\|\nabla u\|_{L^{\infty}(\Omega_r(y_0))}+s\big).
    \end{align*}
    Now we choose $z_0\in\partial\Omega$ such that $\text{dist}(0,\partial\Omega)=|z_0|$. Then $B_{r_2}\subset B_{5r_2}(z_0)$, $B_r(z_0)\subset B_{2r}$ and by \eqref{bou overline phi rho}, we have
    \begin{align*}
        \phi(r_2)\leq C\overline{\phi}(z_0,5r_2)\leq C\big(\frac{r_2}{r}\big)^{\alpha_2}\overline{\phi}(z_0,r/2)+C\hat{h}_1(z_0,10r_2)
        +C\hat{\varrho}_1(5r_1)\big(\|\nabla u\|_{L^{\infty}(\Omega_r(z_0))}+s\big).
    \end{align*}
    Since $\Omega_r(z_0)\subset\Omega_{2r}$, $\Omega_{r/2}(z_0)\subset\Omega_r$, and $\Omega_{10r_2}(z_0)\subset\Omega_{14r_2}$, combining the two inequalities above and using \cite[Lemma 5.8]{dz2024} yields \eqref{in+bou phi}.

    {\bf Case (iii):} $\text{dist}(0,\partial\Omega)\leq 4\rho$. We choose $z_0\in\partial\Omega$ such that $\text{dist}(0,\partial\Omega)=|z_0|$, then $B_{10\rho}(z_0)\subset B_{14\rho}\subset B_r$. By using \eqref{bou overline phi rho}, we get
    \begin{align*}
        \phi(\rho)\leq C\overline{\phi}(z_0,5\rho)\leq C\big(\frac{\rho}{r}\big)^{\alpha_2}\overline{\phi}(z_0,r/2)+C\hat{h}_1(z_0,10\rho)
         +C\hat{\varrho}_1(5\rho)\big(\|\nabla u\|_{L^{\infty}(\Omega_r(z_0))}+s\big).
    \end{align*}
    Since $\Omega_r(z_0)\subset\Omega_{2r}$, $\Omega_{r/2}(z_0)\subset\Omega_r$ and $\Omega_{10\rho}(z_0)\subset\Omega_{14\rho}$, using \cite[Lemma 5.8]{dz2024}, we get \eqref{in+bou phi}.

    Finally, by replacing $\rho$ with $\varepsilon^j\rho$ in \eqref{in+bou phi} and summing over $j$, using \cite[Lemma 5.8]{dz2024} and the principle of comparison Riemann integrals, $\eqref{in+bou sum phi}$ is proved. The proof of Lemma \ref{Lem in+bou sum phi/psi}
    is finished.
\end{proof}

Now we use Lemma \ref{Lem in+bou sum phi/psi} to prove the global gradient modulus of continuity estimates. We still fix $\varepsilon\in(0,1/4)$ sufficiently small such that
\begin{equation*}
    C\varepsilon^{\alpha-\alpha_2}<1\quad and \quad \varepsilon^{\alpha_2}<1/4,
\end{equation*}
where $C$ is the constant in Proposition \ref{prop-phi-Du} and Corollary \ref{cor bou}, $\alpha\in(0,1)$ is the same as in Theorem \ref{thm v-BMO}, $\alpha_1\in(0,\alpha)$, and $\alpha_2=(\alpha+\alpha_1)/2$. Taking $R\in(0,R')$, as defined in \eqref{def-tildeW} and \eqref{def tilde I}, we still define ${\bf \tilde W}_{1/p,p}$ and ${\bf \tilde I}_1$ as
\begin{align*}
{\bf{\tilde W}}_{\frac{1}{p},p}^\rho(|\mu|)(x)=\sum_{i=1}^{\infty}\varepsilon^{\alpha_1 i}\big({\bf W}_{1/p,p}^{\varepsilon^{-i}\rho}(|\mu|)(x)[\varepsilon^{-i}\rho\leq R/2]+{\bf W}_{1/p,p}^{R/2}(|\mu|)(x)[\varepsilon^{-i}\rho>R/2]\big)
\end{align*}
and
\begin{align*}
    {\bf {\tilde I}}_1^{\rho}(|\mu|)(x)=\sum_{i=1}^{\infty}\varepsilon^{\alpha_1i}\big({\bf I}_1^{\varepsilon^{-i}\rho}(|\mu|)(x)[\varepsilon ^{-i}\rho\leq R/2]+{\bf I}_1^{R/2}(|\mu|)(x)[\varepsilon ^{-i}\rho>R/2]\big),
\end{align*}
here ${\bf W}_{1/p,p}$ and ${\bf I}_1$ are the Wolff and Riesz potentials defined in \eqref{W potential} and \eqref{I potential}, respectively.
\begin{theorem}\label{thm contonuity in+bou}
    Suppose that the conditions of Theorem \ref{bou pointwise estiamte} are satisfied, $\alpha\in(0,1)$ is the constant in Theorem \ref{thm v-BMO}, $\alpha_1\in(0,\alpha)$. Then there exist constants $R'=R'(R_0,\varrho_0)\in(0,R_0)$ and $C=C(n, p, \lambda, \alpha_1, \varrho, R_0, \varrho_1)$, such that for any $x_0\in\overline{\Omega}$, $R\in(0,R']$, and any Lebesgue points $x$, $y$ of the vector-valued function $\nabla u$, the following estimates hold:

    (i) when $p\geq2$, then
    \begin{align}\label{continuty p>2}
        |\nabla u(x)-\nabla u(y)|\leq C\mathcal{M}_4\Bigg(\big(\frac{\rho}{R}\big)^{\alpha_1}+\int_0^{\rho}\frac{\tilde{\varrho}_1(t)}{t} \ dt\Bigg)+C\|{\bf {\tilde W}}_{1/p,p}^{\rho}(|\mu|)\|_{L^{\infty}(\Omega_{R/4}(x_0))},
    \end{align}
    where $\rho=|x-y|$, $\varrho_1=\varrho+\varrho_0$, and 
    \begin{equation*}
        \mathcal{M}_4:=\|W_{1/p,p}^R(|\mu|)\|_{L^{\infty}(\Omega_R(x_0))}+C\fint_{\Omega_R(x_0)}\big(|\nabla u|+s\big)\ dy;
    \end{equation*}
    
    (ii) when $3/2\leq p<2$, then
    \begin{align*}
        |\nabla u(x)-\nabla u(y)|&\leq C\mathcal{M}_5\Bigg(\big(\frac{\rho}{R}\big)^{\alpha_1}+\int_0^{\rho}\frac{\overline{\varrho}_1(t)}{t}\ dt\Bigg)+C\|{\bf \tilde W}_{1/p,p}^{\rho}(|\mu|)\|_{L^{\infty}(\Omega_{R/4}(x_0))}\nonumber\\
        &\quad +C\mathcal{M}_5^{2-p}\|{\bf \tilde I}_1^{\rho}(|\mu|)\|_{L^{\infty}(\Omega_{R/4}(x_0))},
    \end{align*}
    where $\rho=|x-y|$, $\varrho_1=\varrho+\varrho_0$, and
    \begin{equation*}
        \mathcal{M}_5:=\|I_1^R(|\mu|)\|_{L^{\infty}(\Omega_{R/4}(x_0))}^{\frac{1}{p-1}}+\Bigg(\fint_{\Omega_R(x_0)}\big(|\nabla u|+s\big)^{2-p}\ dy\Bigg)^{\frac{1}{2-p}};
    \end{equation*}
     (iii) when $1<p<3/2$, then
    \begin{align*}
        |\nabla u(x)-\nabla u(y)|&\leq C\mathcal{M}_6\Bigg(\big(\frac{\rho}{R}\big)^{\alpha_1}+\int_0^{\rho}\frac{\overline{\varrho}_1(t)}{t}\ dt\Bigg)+C\|{\bf \tilde W}_{1/p,p}^{\rho}(|\mu|)\|_{L^{\infty}(\Omega_{R/4}(x_0))}\nonumber\\
        &\quad +C\mathcal{M}_6^{2-p}\|{\bf \tilde I}_1^{\rho}(|\mu|)\|_{L^{\infty}(\Omega_{R/4}(x_0))},
    \end{align*}
    where $\rho=|x-y|$, $\varrho_1=\varrho+\varrho_0$, and
    \begin{equation*}
        \mathcal{M}_6:=\|I_1^R(|\mu|)\|_{L^{\infty}(\Omega_{R/4}(x_0))}^{\frac{1}{p-1}}+\Bigg(\fint_{\Omega_R(x_0)}\big(|\nabla u|+s\big)^{\frac{(p-1)^2}{2}}\ dy\Bigg)^{\frac{2}{(p-1)^2}}.
    \end{equation*}
\end{theorem}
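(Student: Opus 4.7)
The plan is to mirror the interior proof of Theorem~\ref{thm continuity}, replacing the interior iteration of Lemma~\ref{lem-phi-w} by the boundary--adapted iteration of Lemma~\ref{Lem in+bou sum phi/psi} and replacing the interior sup bound of Lemma~\ref{lem nabla w infty} by the global pointwise estimate of Theorem~\ref{bou pointwise estiamte}. First, when $\rho=|x-y|\ge R/8$ the conclusion is immediate from the global pointwise bound applied at $x$ and $y$ separately; for instance in the case $p\ge 2$, estimate \eqref{bou p>2 point} with radius $R$ yields $|\nabla u(x)-\nabla u(y)|\le 2\|\nabla u\|_{L^\infty(\Omega_{R/4}(x_0))}\le C(\rho/R)^{\alpha_1}\mathcal{M}_4$. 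So the interesting regime is $\rho<R/8$.

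For Lebesgue points $x,y\in\Omega_{R/4}(x_0)$ of $\nabla u$, a telescoping argument parallel to \eqref{nabla-q w} gives $|\nabla u(x)-{\bf q}_{x,\rho}(u)|\le C\sum_{j=0}^\infty\phi(x,\varepsilon^j\rho)$, the convergence being guaranteed by iterating \eqref{in+bou phi}. Averaging the four--term triangle inequality over $z\in B_\rho(x)\cap B_\rho(y)\cap\Omega$ as in \eqref{ux-uy} produces
\begin{equation*}
|\nabla u(x)-\nabla u(y)|\le C\sup_{y_0\in\Omega_{R/4}(x_0)}\sum_{j=0}^\infty\phi(y_0,\varepsilon^j\rho).
\end{equation*}
The averaging is legitimate because the $C^{1,\mathrm{DMO}}$ boundary assumption and \eqref{c1<Omega<c2} guarantee that $|B_\rho(x)\cap B_\rho(y)\cap\Omega|\ge c(n)\rho^n$.

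Now I apply Lemma~\ref{Lem in+bou sum phi/psi} with $r=R/8$, noting $B_{2r}(y_0)\subset B_{R/2}(x_0)$ for every $y_0\in\Omega_{R/4}(x_0)$. In the case $p\ge 2$ this yields
\begin{equation*}
\sum_{j=0}^\infty\phi(y_0,\varepsilon^j\rho)\le C\bigl(\tfrac{\rho}{R}\bigr)^{\alpha_1}R^{-n}\|\nabla u\|_{L^1(\Omega_R(x_0))}+C\int_0^\rho\tfrac{\overline{h}_1(y_0,t)}{t}\,dt+C\bigl(\|\nabla u\|_{L^\infty(\Omega_{R/2}(x_0))}+s\bigr)\int_0^\rho\tfrac{\tilde\varrho_1(t)}{t}\,dt.
\end{equation*}
The $L^\infty$ norm of $\nabla u$ over $\Omega_{R/2}(x_0)$ is controlled by $\mathcal{M}_4$ via \eqref{bou p>2 point}, and the $\overline h_1$ integral is rewritten by the same swap--of--sum--and--integral computation leading to \eqref{sum tilde h} as
$\|{\bf\tilde W}_{1/p,p}^\rho(|\mu|)\|_{L^\infty(\Omega_{R/4}(x_0))}+C(\rho/R)^{\alpha_1}\|{\bf W}_{1/p,p}^R(|\mu|)\|_{L^\infty(\Omega_R(x_0))}$. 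Combining these ingredients yields \eqref{continuty p>2}. The cases $3/2\le p<2$ and $1<p<3/2$ proceed by the analogous argument for $\psi$: the extra summation involving $\overline{g}_1$ is converted to $\|{\bf\tilde I}_1^\rho(|\mu|)\|_{L^\infty(\Omega_{R/4}(x_0))}$ since $g_1=h_1^{p-1}$ is precisely the Riesz--type density, the sup norm of $\nabla u$ is controlled by $\mathcal{M}_5$ or $\mathcal{M}_6$ via Theorem~\ref{bou pointwise estiamte}, and the Wolff term is absorbed into the Riesz one through \eqref{ine-1p2}.

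The main obstacle is the geometric bookkeeping near $\partial\Omega$: Lemma~\ref{Lem in+bou sum phi/psi} already treats the interior/near--boundary/boundary trichotomy internally, but I must verify that for every choice of base point $y_0\in\Omega_{R/4}(x_0)$ the shifted half--balls $\Omega_{2r_j}(y_0)$ arising in its proof remain inside $\Omega_{R/2}(x_0)$, so that the potential norms localize to $\Omega_{R/4}(x_0)$ as asserted in the statement; this forces the choice $r=R/8$ above and explains the appearance of $\Omega_{R/4}$ rather than $\Omega_{R/2}$ in the $L^\infty$ norms of the potentials. Everything else is a careful bookkeeping of the same manipulations already worked out in the interior Theorem~\ref{thm continuity}.
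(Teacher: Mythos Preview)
Your proposal is correct and follows essentially the same route as the paper: split on $\rho$ versus a fixed fraction of $R$, telescope to reduce to $\sup_{y_0}\sum_j\phi(y_0,\varepsilon^j\rho)$, invoke Lemma~\ref{Lem in+bou sum phi/psi} at scale $r=R/8$, convert the $\overline h_1$ integral via the \eqref{sum tilde h} computation, and close with the pointwise bound from Theorem~\ref{bou pointwise estiamte}. The only cosmetic differences are that the paper uses the threshold $\rho\ge R/16$ and, to avoid verifying $|B_\rho(x)\cap B_\rho(y)\cap\Omega|\gtrsim\rho^n$ near the boundary, it averages instead over $\Omega_\rho(x)$ paired with the asymmetric choice ${\bf q}_{y,2\rho}$ (using $\Omega_\rho(x)\subset\Omega_{2\rho}(y)$), which sidesteps the lens--geometry remark you made.
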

    \begin{proof} We will prove the case of $p\geq 2$ as an example since other cases are similar.
For any Lebesgue points $x$, $y$ $\in\Omega_{R/4}(x_0)$ of $\nabla u$, we denote $\rho:=|x-y|>0$.  If $\rho\geq R/16$, then $\rho/R\geq1/16$, which  yields
    \begin{align*}
        |\nabla u(x)-\nabla u(y)|\leq 2\|\nabla u\|_{L^{\infty}(\Omega_{R/2}(x_0))}\leq \frac{32\rho}{R}\|\nabla u\|_{L^{\infty}(\Omega_{R/2}(x_0))}\leq C\big(\frac{\rho}{R}\big)^{\alpha_2}\|\nabla u\|_{L^{\infty}(\Omega_{R/2}(x_0))}.
    \end{align*} 
    
   If $\rho<R/16$, using the triangle inequality, we get
    \begin{align}\label{triangle nabla ux-nabla uy}
        |\nabla u(x)-\nabla u(y)|&\leq |\nabla u(x)-{\bf q}_{x,\rho}|+|{\bf q}_{x,\rho}-{\bf q}_{y,2\rho}|+|{\bf q}_{y,2\rho}-\nabla u(y)|\nonumber\\
        &\leq |\nabla u(x)-{\bf q}_{x,\rho}|+|\nabla u(y)-{\bf q}_{x,2\rho}|+|\nabla u(z)-{\bf q}_{x,\rho}|+|\nabla u(z)-{\bf q}_{y,2\rho}|.
    \end{align}
   Similar to the proof \eqref{nabla-q w}, we have
    \begin{equation*}
        |\nabla u(x)-{\bf q}_{x,\rho}|\leq C\sum_{j=0}^{\infty}\phi(x,\varepsilon^j\rho)\quad and\quad |\nabla u(y)-{\bf q}_{x,2\rho}|\leq C\sum_{j=0}^{\infty}\phi(y,2\varepsilon^j\rho),
    \end{equation*}
    where $C$ is a constant that depends only on $n$.
    Since $\Omega_{\rho}(x)\subset\Omega_{2\rho}(y)$, averaging the integral over $z\in\Omega_{\rho}(x)$ in \eqref{triangle nabla ux-nabla uy}, we get
    \begin{align*}
        |\nabla u(x)-\nabla u(y)|&\leq |\nabla u(x)-{\bf q}_{x,\rho}|+|\nabla u(y)-{\bf q}_{x,2\rho}|+C\phi(x,\rho)+C\phi(y,2\rho)\\
        &\leq C\sum_{j=0}^{\infty}\phi(x,\varepsilon^j\rho)+C\sum_{j=0}^{\infty}\phi(y,2\varepsilon^j\rho)+C\phi(x,\rho)+C\phi(y,2\rho)\\
        &\leq C\sup_{z_0\in\Omega_{R/4}(x_0)}\sum_{j=0}^{\infty}\phi(z_0,2\varepsilon^j\rho).
    \end{align*}
Since $\Omega_{R/4}(z_0)\subset\Omega_{R/2}(x_0)$, $\forall z_0\in\Omega_{R/4}(x_0)$, replacing $r$ by $R/8$ in \eqref{in+bou sum phi}, we obtain
    \begin{align}\label{|nabla ux-nabla uy| rho<R/16}
        |\nabla u(x)-\nabla u(y)|&\leq C\big(\frac{\rho}{R}\big)^{\alpha_2}\|\nabla u\|_{L^{\infty}(\Omega_{R/2}(x_0))}+C\sup_{z_0\in\Omega_{R/4}(x_0)}\int_0^{\rho}\frac{\overline{h}_1(z_0,t)}{t}\ dt\nonumber\\
        &\quad +C\big(\|\nabla u\|_{L^{\infty}(\Omega_{R/2}(x_0))}+s\big)\int_0^{\rho}\frac{\tilde{\varrho}_1(t)}{t}\ dt.
    \end{align}
 Note that
    \begin{equation*}
        \overline{h}_1(y_0,t)\leq \sum_{i=1}^{\infty}\varepsilon^{\alpha_1i}\big(h(x,\varepsilon^{-i}t)[\varepsilon^{-i}t\leq R/2]+h(x_0,R/2)[\varepsilon^{-i}t>R/2]\big).
    \end{equation*}
   Then  for any $z_0\in\Omega_{R/4}(x_0)$ and $\rho\in(0,R/2)$, similar to the proof of \eqref{sum tilde h}, we have
    \begin{equation*}
        \int_0^{\rho}\frac{\overline{h}_1(z_0,t)}{t}\ dt\leq {\bf {\tilde W}}_{1/p,p}^{\rho}(|\mu|)(z_0)+C\big(\frac{\rho}{R}\big)^{\alpha_1}{\bf W}_{1/p,p}^R(|\mu|)(z_0).
    \end{equation*}
    From \eqref{bou p>2 point} we get
    \begin{equation*}
        \|\nabla u\|_{L^{\infty}(\Omega_{R/2}(x_0))}\leq C\|{\bf W}_{1/p,p}^R(|\mu|)(x)\|_{L^{\infty}(\Omega_R(x_0))}+C\fint_{\Omega_R(x_0)}\big(|\nabla u|+s\big)\ dy.
    \end{equation*}
     Combining \eqref{|nabla ux-nabla uy| rho<R/16} and the last two inequalities, we arrive at \eqref{continuty p>2}. Theorem \ref{thm contonuity in+bou} is proved.
    \end{proof}

\section*{Acknowledgement}

The first author would like to thank Prof. Hongjie Dong for very helpful discussions and suggestions.

\bibliographystyle{abbrv}
\bibliography{ref}

\end{document}